\documentclass[12pt]{amsart}
\usepackage[utf8]{inputenc}
\usepackage[T1]{fontenc}
\usepackage{amsmath}
\usepackage{amsfonts}
\usepackage{amssymb,amsthm}
\usepackage{graphicx}
\usepackage{mathrsfs}
\usepackage{hyperref}
\usepackage{color}
\usepackage[dvipsnames]{xcolor}
\usepackage{comment}
\usepackage{appendix}
\usepackage{enumerate}
\usepackage{enumitem}
\usepackage{cancel}
\usepackage{float} 
\usepackage{bm}

\newtheorem{thm}{Theorem}[section]

\newtheorem{pro}[thm]{Proposition}
\newtheorem{cor}[thm]{Corollary}
\newtheorem{lem}[thm]{Lemma}
\newtheorem{rem}[thm]{Remark}

\newcommand{\R}{\mathbb{R}}
\newcommand{\N}{\mathbb{N}}

\usepackage{chngcntr}

\counterwithin*{equation}{section}
\def\qq#1{\qquad \mbox{#1}\quad}
\def\q#1{\quad \mbox{#1}\ }
\newcommand{\al}{\alpha}
\newcommand{\be}{\beta}

\newcommand{\e}{\varepsilon }

\newcommand{\g}{\gamma }

\newcommand{\La}{\Lambda }

\newcommand{\na}{\nabla }

\newcommand{\Om}{\Omega }
\newcommand{\Omb}{\overline{\Om}}
\newcommand{\p}{\partial }
\newcommand{\s}{\sigma}
\newcommand{\te}{\theta}
\newcommand{\Te}{\Theta}

\title[$L^\infty$ estimates in the presence of gradient terms]{$L^\infty$ estimates for solutions to elliptic equations in the presence of gradient terms}

\author[E.~Antonio]{Edgar Antonio}
\address[E.~Antonio]{Universidad Autónoma de Guerrero, Guerrero, Mexico \\ Universidad Complutense de Madrid, Madrid, Spain}
\email{eaam020713@gmail.com | eantonio@ucm.es}
\author[R.~Pardo]{Rosa Pardo}
\address[R.~Pardo]{Universidad Complutense de Madrid, Madrid, Spain}
\email{rpardo@ucm.es}

\thanks{The second author is supported by grants  PID2022-137074NB-I00,  MICINN,  Spain, and by UCM, Spain,  Grupo 920894.}

\date{}

\begin{document}
%----------------------------------------------------------------
%----------------------------------------------------------------
%--------------------- A B S T R A C T --------------------------
%----------------------------------------------------------------
%----------------------------------------------------------------
\begin{abstract}
We consider an elliptic problem with slightly subcritical nonlinearities at the interior and on the boundary; the nonlinearity at the interior is also depending on a gradient term.
For any $u$ weak solution, we provide explicit $L^\infty$ {\it a priori} estimates depending only on both nonlinearities, on the $H^1(\Om)$ norm of $u,$  and on the domain $\Omega$. To obtain our results, we combine De Giorgi-Nash-Moser iteration procedure,   elliptic regularity when  the gradient term appears,  Lebesgue interpolation and the Gagliardo-Nirenberg interpolation inequalities. 
\end{abstract}
\maketitle
{\bf MSC 2020: }{\it Primary 35B45; %A priori estimates in context of PDEs
Secondary 
35J66,  %Nonlinear boundary value problems for nonlinear elliptic equations
35B33,  %Critical exponents in context of PDEs
%35J75,  % Singular elliptic equations 
35J25. %BVP for second-order elliptic equations, 
%35J60,   	%Nonlinear elliptic equations
%35J61,   	%Semilinear elliptic equations
}

{\bf Keywords: }{\it A priori estimates, slightly subcritical non-linearities, $L^\infty$  a priori estimates, nonlinear boundary conditions.}

%----------------------------------------------------------------
%----------------------------------------------------------------
%----------------- I N T R O D U C T I O N ----------------------
%----------------------------------------------------------------
%----------------------------------------------------------------

\section{Introduction}
The main goal of this paper is to provide an explicit   $L^\infty(\Om)$ estimate of the weak solutions to the problem
\begin{equation}\label{pde}
\left \{
\begin{aligned}
-\Delta u +u=& f(x,u, \nabla u ), \; \; \; x\in \Om , \\
\frac{\p u}{\p \nu }=& f_{B}(x,u), \;x\in \p \Om , 
\end{aligned}
\right .
\end{equation}
in a certain open, bounded, connected  domain $\Om \subset \mathbb{R}^N$, $N> 2$, with $C^{0,1}$ (Lipschitz) boundary, where the nonlinearities, at the interior $f:\Om\times \mathbb{R} \times \mathbb{R}^N \rightarrow \mathbb{R}$ and on the boundary $f_{B}:\p \Om \times \mathbb{R} \rightarrow \mathbb{R}$ are both {\it slightly subcritical}  Carathéodory functions
(see \ref{f2} and \ref{fB2} respectively for a precise definition of slightly subcritical function). \\

\par We say that a weak solution $u\in H^1(\Om)$ of \eqref{pde} has an $L^\infty(\Om)$ {\it a priori estimate} if  $u \in L^\infty(\Om)$ and there exists some function $h:\R\to \R$ depending only on $f$, and $f_{B}$,  satisfying $h(s)\to\infty$ as $s\to\infty$, and such that 
\begin{equation*}%\label{estim:u}
h\big(\|u\|_{L^\infty(\Om)} \big)\leq M,\qq{where}M=M\big(\|u\|_{H^1(\Om)},\Om \big) .
\end{equation*}
Moser iteration techniques imply $L^{\infty}(\Om)\cap L^{\infty}(\p\Om)$ regularity of weak solutions to \eqref{pde} when $f, f_B$ are both {\it slightly subcritical} or even critical, see \cite[Theorem~3.1]{Marino_Winkert_NonlAnal_2019}, \cite[Theorem~4.1]{Ho_Winkert_Zhang_2022}, and  \cite[Theorem~4.3]{Ho_Winkert_2023}, where more general quasilinear problems, including  as a particular case \eqref{pde}, are treated. 
By elliptic regularity, it turns out that if $u$ is a weak solution to \eqref{pde}, then $u \in  W^{1,m}(\Om )\cap  C^{\nu }(\Omb)$ as well (see Theorem \ref{th:reg:nol}), with $m>N>2$ and $\nu=1-\frac{N}{m}$.

\bigskip

Our goal here is to  explicitly define the function $h$  in terms of $f$, and $f_{B}$, and also the bound $M$  in terms of powers of the $H^1(\Om)$ norm of $u$, see Theorem \ref{th:RegEst}. 
Our estimates are valid either for  positive or changing-sign solutions. Consequently, any sequence of solutions to \eqref{pde}, {\it uniformly bounded} in the $H^1(\Om)$ norm, is also {\it uniformly bounded} in the $L^\infty (\Om)$ norm.
\bigskip

For similar explicit $L^{\infty}(\Om)$ estimates in the semilinear and quasilinear cases satisfying homogeneous Dirichlet boundary conditions with  nonlinearities exclusively in the interior, see \cite{Pardo_JFPTA_2023}  and \cite{Pardo_RACSAM_2024}. See also \cite{Chhetri_Mavinga_Pardo_PAMS, CMP_NonlA_2026} for elliptic equations and systems,  with a subcritical nonlinearity exclusively on the boundary. For the combination of slightly subcritical nonlinearities at the interior and on the boundary, see \cite{Antonio_Arciga_Pardo_Sanchez}. 
None of the previous explicit estimate results  include gradient terms in the nonlinearities, which is the core of our present work.
\bigskip

One of the main difficulties in addressing these problems depends largely on obtaining adequate regularity estimates for the gradient term;  see Theorem \ref{th:W1m:unif} and Theorem \ref{th:reg:nol} for that purpose. \\

\par A uniform $H^1(\Om)$ {\it a priori} bound result for weak solutions to \eqref{pde} will complement our result, providing a uniform $L^\infty(\Om)$ {\it a priori} bound. Theorem \ref{th:RegEst} implies in particular that sequences of weak solutions to \eqref{pde} uniformly  bounded in  their $H^1(\Om)$-norms, are uniformly bounded in their $L^\infty(\Om)$-norm.
In fact, a question naturally arises: does the equation  \eqref{pde} have a uniform $L^\infty(\Om)$ {\it a priori} bound for any positive weak solution? 
To the best of our understanding, it remains as an open problem.  
\bigskip

The main novelties of this paper are the following:
\bigskip

(i) We provide explicit $L^\infty$  {\it a priori} estimates for solutions to nonlinear elliptic equations in terms of their $H^1-$norms, see Theorem \ref{th:RegEst}. To obtain it, we need to analyze carefully a related problem \eqref{eq:W1m:fx:Unif}, obtaining {\it a priori} estimates for solutions to that auxiliar problem, see Theorem \ref{th:reg:2} and Theorem \ref{th:W1m:unif}. 

\medskip

(ii) We consider nonlinearities depending on the gradient.

\medskip

(iii) We focus on slightly subcritical functions, that is, at the interior $f=f(x,s,\xi)$ satisfies \ref{f1}-\ref{f2}; and on the boundary $f_{B}=f_B(x,s)$ satisfies \ref{fB1}-\ref{fB2}.

\medskip

(iv) As a Corollary, we provide explicit $L^\infty$  {\it a priori} estimates for solutions to nonlinear elliptic equations in terms of their critical Lebesgue $L^{2^*}(\Om)$ and trace $L^{2_*}(\p\Om)$ norms, see Corollary \ref{cor1}. 

\bigskip

This paper is outlined as follows.
We devote Section \ref{sec:prel} to some preliminaries, including the precise hypothesis on the nonlinearities, to state our main result, Theorem \ref{th:RegEst}. Section \ref{sec:reg:estim} contains a careful analysis of some estimates for weak solutions to an auxiliary  problem \eqref{eq:W1m:fx:Unif} with $g=g(x,\xi)$, and nonhomogeneous Neumann boundary conditions, see Theorem \ref{th:W1m:unif}. Section \ref{sec:expl:estim}, is devoted to  prove  Theorem \ref{th:RegEst} and Corollary \ref{cor1}. 
In Section \ref{sec:expl} we explicitly calculate  the exponent $\be_0$ in  Theorem \ref{th:RegEst}.
Finally, in  the Appendices \ref{sec:appA} and \ref{sec:appB} we include some Moser type results, adding some appropriate elliptic regularity estimates, see Theorem \ref{th:A} and Theorem \ref{th:reg:nol}
From now on, throughout this paper $C$ denotes several constants independent of $u$.

%----------------------------------------------------------------
%----------------------------------------------------------------
%---------------- P R E L I M I N A R I E S ---------------------
%----------------------------------------------------------------
%----------------------------------------------------------------

\section{Preliminaries and Main Result}\label{sec:prel}
For $p>1$, we define the trace operator as follows
\begin{equation*} 
\Gamma :W^{1,p}(\Om)\rightarrow L^{p}(\p \Om ),
\end{equation*}

\begin{enumerate}[leftmargin=.2cm]
\item $\Gamma u= u|_{\p \Om }$ \,  if \, $u\in W^{1,p}(\Om )\cap C(\Omb)$.
\bigskip  
\item $\|\Gamma u\|_{L^{p}(\p \Om )}\leq C\|u\|_{W^{1,p}(\Om )}$.
\end{enumerate}
\noindent Since the surjectivity and the continuity of the trace operator, we get 
\begin{equation*} 
\Gamma :W^{1,p}(\Om)\rightarrow W^{1-\frac{1}{p},p}(\p \Om ) \hookrightarrow L^{q}(\p \Om ), \quad \text{for} \quad  1\le q\le \frac{(N-1)p}{N-p},
\end{equation*}
and 
\begin{equation*}
\|\Gamma u \|_{L^{q}(\p \Om )}\le C\|u\| _{W^{1,p}(\Om)}, \quad \text{for some} \quad C>0.
\end{equation*}
This operator is continuous for $1\le q\le \frac{(N-1)p}{N-p}$, and compact for  $1\le q< \frac{(N-1)p}{N-p}$ (see  \cite[Theorem 6.4.1]{Kufner_Fucik} and \cite[Lemma 9.9]{Brezis}). 
\medskip

Throughout this paper, we use the Sobolev embedding
$
H^{1}(\Om )\hookrightarrow L^{2^*}(\Om ),
$ 
and  the continuity of the trace operator 
$ 
H^{1}(\Om )\hookrightarrow L^{2_*}(\p \Om ) ,
$
where $2^*:=\frac{2N}{N-2}$ is the critical Sobolev exponent and $2_*:=\frac{2(N-1)}{N-2}=\frac{(N-1)}{N}\,2^*$ is the critical exponent in the sense of the trace.\\
For $1< p,\ p_B\le \infty$,  the critical Hardy-Sobolev exponent are given by
\begin{equation}\label{Def:Np:NpB}
2^*_{N/p}:=\frac{2^*}{p'}=2^*\Big( 1-\frac{1}{p}\Big) \quad \text{and}\quad 2_{*,N/ p_{B}}:=\frac{2_*}{p_{B}'}=2_*\Big( 1-\frac{1}{p_B}\Big) ,
\end{equation}
where $p'$ is the conjugate exponent of $p$, that is $\frac{1}{p}+\frac{1}{p'}=1$. \\

Given $f: \Om \times \mathbb{R} \times \mathbb{R}^N \to \mathbb{R} $, we assume   the following hypothesis on the nonlinearity at the interior
\begin{enumerate}[label=\textbf{(f\arabic*)}, leftmargin=.2cm]
\item
\label{f1}
$f$ is a {\it Carath\'eo\-dory} function:
\begin{enumerate}[leftmargin=.2cm]
\item $f(\cdot, t , \xi) $ is measurable for each  $(t,\xi ) \in \mathbb{R} \times \mathbb{R}^N$;
\item $f(x, \cdot , \cdot )$ is continuous for each $x \in \Om$.
\end{enumerate}
\item 
\label{f2} $f$ is {\it slightly subcritical (at infinity)}, that is
\begin{equation*}
|f(x,s,\xi)|  \le  a_{1}|\xi|^{l} +  \widehat{f}(x,s) ,
\end{equation*}
where $0< l<\tfrac{N+2}{N}$, $a_1\in \mathbb{R}$, 
\begin{align}\label{f:hat}
\widehat{f}(x,s):=|a_{2}(x)| &\widetilde{f}\big(|s|\big),
\end{align}
$a_2\in L^{r}(\Om)$ for $r>\frac{N}{2}$, $\widetilde{f}:[0,+\infty )\rightarrow [0,+\infty )$
is continuous, non-decreasing, $\widetilde{f}(s) >0$ for $s>0$, and  such that
\begin{equation}\label{E0.3}
\lim _{ s\rightarrow +\infty }\frac{\widetilde{f}(s)}{ s^{\, 2^*_{N/r}-1}}=0.
\end{equation}
\end{enumerate}
\bigskip

Likewise, given $f_{B}:\p \Om \times \mathbb{R} \rightarrow \mathbb{R}$, we  assume the following hypothesis for the nonlinearity on the boundary
\begin{enumerate}[label=\textbf{(f$_B$\arabic*)},leftmargin=.2cm]
\item
\label{fB1}
$f_{B}$ is a {\it Carath\'eo\-dory} function:
\begin{enumerate}[leftmargin=.2cm]
\item $f_{B}(\cdot ,s)$ is measurable for each  $s\in \mathbb{R}$;
\item $f_{B}(x, \cdot )$ is continuous for each $x \in \p \Om$.
\end{enumerate}
\item
\label{fB2}
$f_{B}$ is {\it slightly subcritical (at infinity)}, that is:
\begin{equation*}
|f_{B}(x,s)| \leq |a_{B}(x)|\,  \widetilde{f}_{B}\big(|s|\big),
\end{equation*}
with $a_B(x)\in L^{r_B}(\p \Om )$ for $ r_{B}>N-1 $,  and $\widetilde{f}_B:[0,+\infty )\rightarrow [0,+\infty )$ is 
continuous, non-decreasing, $\widetilde{f}_B(s) >0$ for $s>0$, and such that
\begin{equation}\label{E0.5}
\lim _{ s\rightarrow +\infty }\frac{\widetilde{f}_B(s)}{ s^{\, 2_{*,N/ r_{B}}-1}}=0.
\end{equation}
\end{enumerate}
Concerning both nonlinearities $f$ and $f_B$ we also assume the following:
\begin{enumerate}[label=\textbf{(f.f$_B$)},leftmargin=.2cm]
\item
\label{ffB}
Either $\widetilde{f}(s) \to\infty$ as $s\to\infty$, or $\widetilde{f}_B(s) \to\infty$ as $s\to\infty$.
\end{enumerate}

\bigskip

\noindent We say that $u$ is a weak solution to \eqref{pde} if $u\in H^{1}(\Om ),$ $f(\cdot, u, \na u)\in L^{(2^{*})'}(\Om )$, $f_{B}(\cdot, u)\in L^{(2_{*})'}(\p \Om )$, and  for all $\psi\in H^{1}(\Om ),$
\begin{equation*}%\label{P4}
\int _{\Om }\left(\na u \na \psi + u\psi \right) dx = \int _{\Om }f(x,u,\na u)\psi\,dx+ \int _{\p \Om }f_{B}(x,u)\psi\, dS \,, 
\end{equation*}
where $ (2^*)':=\frac{2N}{N+2}$ and $(2_*)':=\frac{2(N-1)}{N} $ 
are the conjugate exponents of $2^*$ and $2_*$ respectively.

\begin{rem}%\label{remark-1}  
(i) Let $u\in H^{1}(\Om )$.  Obviously, for $0<l<\tfrac{N+2}{N},$ $|\na u|^l\in L^{2/l}(\Om)$ with $2/l>(2^*)'.$ Moreover, by Sobolev embeddings, for $f$ and $f_B$ slightly subcritical (satisfying {\rm\ref{f2}} and {\rm\ref{fB2}} respectively), we have
\begin{align*}
\widetilde{f}(|u|)\in L^{\frac{2^*}{2^*_{N/r}-1}}(\Om ), \quad\text{where}& \quad \frac{2^*_{N/r}-1}{2^*}=\frac{1}{2}+\frac{1}{N}-\frac{1}{r},\\
\widetilde{f}_{B}(|u|)\in L^{\frac{2_*}{2_{*,N/ r_{B}}-1}}(\p \Om ), \quad\text{where}& \quad \frac{2_{*,N/ r_{B}}-1}{2_*}=\frac{N}{2(N-1)}-\frac{1}{r_B}.
\end{align*}
Hence, 
\begin{equation*}
f(\cdot ,u, \na u)\in L^{(2^*)'}(\Om ) \quad \text{and} \quad  f_{B}(\cdot ,u)\in L^{(2_*)'}(\p \Om ).
\end{equation*}
(ii) We can always choose $\widetilde{f}$ and $\widetilde{f}_{B}$ such that $\widetilde{f}(s)>0$ and $\widetilde{f}_{B}(s)>0$ for $s>0$, and redefining $\widetilde{f}(s)$ and $\widetilde{f}_{B}(s)$, as $\max_{[0,s]}\widetilde{f}$ and $\max_{[0,s]}\widetilde{f}_{B}$, resp.,  $\widetilde{f}$ and $\widetilde{f}_{B}$ are non decreasing positive functions  for $s>0$.
\end{rem}

\medskip

Next, we define a new functions, $h$, which is essential for the statement of our main result. 
Given the auxiliary functions
\begin{equation}\label{def:h:hB}
\widetilde{h}(s):=\frac{s^{\, 2^*_{N/r}-1}}{\widetilde{f}(s)} \quad \text{and} \quad \widetilde{h}_{B}(s):=\frac{s^{\, 2_{*,N/ r_{B}}-1}}{\widetilde{f}_{B}(s) } ,\quad \text{for} \ s>0,
\end{equation}
let $h$  be the minimum of $\widetilde{h}$ and a certain power of $\widetilde{h}_B$, specifically
\begin{equation}\label{def:h}
h(s):=\min \left\{ \widetilde{h}(s),\, \widetilde{h}_{B}^{\,\frac{2^*_{N/r}-1}{2_{*,N/r_B}-1}}(s)\right\}\rightarrow \infty  \q{as}s\rightarrow \infty . 
\end{equation}
using \eqref{E0.3} and \eqref{E0.5}.

Our main result is the following one:
\begin{thm}[\texorpdfstring{$L^\infty (\Om )$}{}  a priori estimates] \label{th:RegEst}
Let $f:\Om\times \mathbb{R} \times \mathbb{R}^N \rightarrow \mathbb{R}$ and $f_{B}=\p \Om \times \mathbb{R}\rightarrow \mathbb{R}$ be Carath\'eodory functions, satisfying {\rm\ref{f1}}-{\rm\ref{f2}} and {\rm\ref{fB1}}-{\rm\ref{fB2}} respectively, and assume also {\rm\ref{ffB}}.
Let $u$ be a weak solution to \eqref{pde}, then,
there exist an exponent $\be_0=\be_0(N,l, r, r_B)>0$, such that
for all $\e >0$,  there exists a constant $C_\e>0$ 
\begin{equation}\label{estim}
h(\|u\|_{L^{\infty }(\Om ) })
\le C_\e\Big(1+\|u\|_{H^1(\Omega)}^{\be_0+\e} \Big),
\end{equation}
where $C_{\e }=C_\e(N,l,r, r_B,|\Om |,|\p \Om |, a_M )>0$ is independent of $u$.
\end{thm}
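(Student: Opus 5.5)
The plan follows the scheme of the authors' earlier works without gradient terms: split off the gradient part and the superlinear parts of the nonlinearities, treat them as data of a linear Neumann problem, apply the regularity and Moser-type estimates for that linear problem, and close by interpolation. Set $u$ a weak solution, $M_\infty:=\|u\|_{L^\infty(\Om)}$ (finite by Theorem \ref{th:reg:nol}), and rewrite \eqref{pde} as
\begin{equation*}
-\Delta u+u=g(x):=f(x,u,\na u),\qquad \frac{\p u}{\p\nu}=g_B(x):=f_B(x,u).
\end{equation*}
By \ref{f2}, $|g|\le a_1|\na u|^l+|a_2|\widetilde f(|u|)$. Since $\widetilde f,\widetilde f_B$ are nondecreasing, on $\Om$ we have $|g|\le a_1|\na u|^l+|a_2|\,\widetilde f(M_\infty)$. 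Likewise, on $\p\Om$ we have $|g_B|\le |a_B|\,\widetilde f_B(M_\infty)$.

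First, we apply the Moser-type estimate from the Appendix (Theorem \ref{th:A}) together with the elliptic regularity results for the auxiliary problem (Theorems \ref{th:reg:2} and \ref{th:W1m:unif}). The intended output is a bound of the form
\begin{equation*}
M_\infty\le C\Big(\|\,|\na u|^l\|_{L^{q}(\Om)}+\widetilde f(M_\infty)\,\|a_2\|_{L^r}+\widetilde f_B(M_\infty)\,\|a_B\|_{L^{r_B}}\Big)^{\theta}\,\|u\|_{L^{2^*}}^{1-\theta}.
\end{equation*}
Here $q>N/2$ is admissible, and the weight $\theta$ comes from a Gagliardo--Nirenberg or Lebesgue interpolation between $L^{2^*}$ and $L^\infty$. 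To make this sharp, we interpolate rather than crudely estimating by $M_\infty$. Writing $\widetilde f(|u|)\le \widetilde f(M_\infty)M_\infty^{-(2^*_{N/r}-1)}|u|^{2^*_{N/r}-1}$ is not possible, since $\widetilde f(s)/s^{p}$ need not be monotone. Instead we keep $\widetilde f(M_\infty)$ as the factor and bound the remaining Lebesgue norms of $u$ through $\|u\|_{H^1}$. Dividing by $M_\infty^{2^*_{N/r}-1}$ then produces $1/\widetilde h(M_\infty)$ and $1/\widetilde h_B(M_\infty)$, the latter raised to the power appearing in \eqref{def:h} after matching exponents.

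Second, we handle the gradient term, which is the main obstacle. Since $l<\frac{N+2}{N}$, Theorem \ref{th:W1m:unif} should give $\|\na u\|_{L^{lq}}$ controlled by a power of the data. We then apply Gagliardo--Nirenberg interpolation between $\|\na u\|_{L^2}\le\|u\|_{H^1}$ and a higher norm of $u$, namely $W^{1,m}$ or $W^{2,\cdot}$, which is controlled by $M_\infty$ and the nonlinearity bounds. This gives
\begin{equation*}
\|\,|\na u|^l\|_{L^q}\le C\,\|u\|_{H^1}^{a}\,(1+M_\infty+\widetilde f(M_\infty)+\widetilde f_B(M_\infty))^{b},
\end{equation*}
with $b<1$ precisely because $l<\frac{N+2}{N}$. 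The term with $b<1$ is then absorbed by Young's inequality with a small parameter. This absorption is where the $\e$-loss in the exponent $\be_0+\e$ and the constant $C_\e$ arise.

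Finally, we collect terms. Assumption \ref{ffB} ensures that $\max\{\widetilde f(M_\infty),\widetilde f_B(M_\infty)\}$ dominates $1+M_\infty^{\text{(small powers)}}$ for large $M_\infty$, so the constant and linear contributions can be absorbed. We divide both sides by the dominating nonlinear factor and use the definition \eqref{def:h} of $h$ as the minimum, so that whichever of $\widetilde h$ or $\widetilde h_B^{(\cdot)}$ is active is bounded. This yields
\begin{equation*}
h(M_\infty)\le C_\e\big(1+\|u\|_{H^1}^{\be_0+\e}\big),
\end{equation*}
with $\be_0$ collected from the interpolation exponents in terms of $N,l,r,r_B$. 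Small values of $M_\infty$ are trivially covered by the constant $1$. The bookkeeping of $\be_0$ would be deferred to Section \ref{sec:expl}.
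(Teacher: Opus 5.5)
Your outline has the paper's architecture: the $W^{1,m}$ bound of Theorem \ref{th:W1m:unif}, then Gagliardo--Nirenberg $\|u\|_{L^\infty(\Om)}\le C\|u\|_{W^{1,m}(\Om)}^{\s}\|u\|_{L^{2^*}(\Om)}^{1-\s}$, then division by $\|u\|_{L^\infty(\Om)}$. However, the step that actually produces $h$ is missing, and the inequality you write down does not close. You say the crude bound must be avoided, yet the replacement you describe (``keep $\widetilde f(M_\infty)$ as the factor'') is the crude bound again.

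With $m=q^*$ one has $1/\s=2^*_{N/q}-1$. So a full factor $\widetilde f(M_\infty)^{\s}$, divided by $M_\infty$, leaves $M_\infty^{\frac{2^*_{N/r}-1}{2^*_{N/q}-1}-1}\,\widetilde h(M_\infty)^{-\s}$. This is a strictly positive power of $M_\infty$, because $q<r$. The same happens on $\p\Om$, because $2_{*,N/r_B}>2^*_{N/q}$ for $q<\widetilde{r_B}$. Since $\widetilde h$ may grow arbitrarily slowly, no bound on $h$ follows. Hypothesis \ref{ffB} cannot rescue this: $\widetilde f(s)=\log(2+s)$ satisfies all hypotheses yet dominates no positive power of $s$, so the absorption in your last paragraph is not available.

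The missing idea is the paper's fractional splitting $\widetilde f(|u|)^q\le \widetilde f(M_\infty)^{q-t}\,\widetilde f(|u|)^t$. Apply H\"older with $|a_2|^q\in L^{r/q}(\Om)$ and choose $t$ so that $ts'=2^*/(2^*_{N/r}-1)$. Then \eqref{E0.3} gives $\int_\Om\widetilde f(|u|)^{ts'}\le C(1+\|u\|_{L^{2^*}(\Om)}^{2^*})$, and only $\widetilde f(M_\infty)^{1-t/q}$ survives. Since $(2^*_{N/r}-1)(1-t/q)\s=1$ exactly, the interior term becomes $\|u\|_{L^\infty(\Om)}\,\widetilde h^{-1/(2^*_{N/r}-1)}$ times powers of $\|u\|_{H^1(\Om)}$.

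The boundary term satisfies the analogous identity with the \emph{same} $\s$ only because $q_B$ is tied to $q$ by $q_B=(N-1)q^*/N$. This forces $q=\widetilde{q_B}$, $m=q^*=Nq_B/(N-1)$ and $2_{*,N/q_B}=2^*_{N/q}$. Your phrase ``after matching exponents'' hides exactly this coupled choice, and it is the heart of the proof.

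Further corrections:
\begin{itemize}
\item The gradient term should be absorbed into $\|u\|_{W^{1,m}(\Om)}$ on the left, as in Step 5 of the proof of Theorem \ref{th:W1m:unif}. It then contributes only $\|\na u\|_{L^2(\Om)}^{\g}$, independent of $M_\infty$. Bounding $\|\,|\na u|^l\|_{L^q(\Om)}$ by a power of $1+M_\infty+\widetilde f(M_\infty)+\widetilde f_B(M_\infty)$ reintroduces the crude factor.
\item The $\e$ does not come from Young's inequality, whose fixed parameter only affects constants. Each admissible $q\in(\tfrac N2,\min\{r,\widetilde{r_B}\})$ gives the estimate with the exponent $\be(q)$ of \eqref{Def:Beta:C}. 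The $\e$ arises because $\be_0=\inf_q\be(q)$ over this open interval need not be attained.
\item In the paper, \ref{ffB} serves only to keep $h(s)^{1/(2^*_{N/r}-1)}/s$ bounded. That handles the term $\|u\|_{H^1(\Om)}^{\g\s+1-\s}/\|u\|_{L^\infty(\Om)}$ after multiplying through by $h^{1/(2^*_{N/r}-1)}(\|u\|_{L^\infty(\Om)})$.
\item Theorem \ref{th:A} is purely qualitative: it yields $u\in L^\infty(\Om)$ but no estimate.
\end{itemize}
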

In the Corollary \ref{coro}  we give an explicit definition of $\be_0$, see also table \eqref{Table1} in  Appendix \ref{ApC} to  summarize the result.

An intermediate estimate in terms of powers of the critical Sobolev and trace norms is obtained  during the proof of Theorem \ref{th:RegEst}. Specifically we have the following Corollary.

  \begin{cor}\label{cor1}
Assume that all the hypothesis of Theorem \ref{th:RegEst} are satisfied.
Let $u$ be a weak solution to \eqref{pde}, then there exist three constants,   $\be_i>0$, $i=1,2,3$, depending only on $N,l, r, r_B$, and independent of $u$,   satisfying that for all $\e>0$, there exists a constant $C=C_\e$ such that  
\begin{align*} 
 h\big(\| u\|_{L^\infty (\Om ) }\big)
&\leq C_\e\Big(1+ \|u\|_{L^{2^*}(\Omega)}^{\be_1}
+ \|u\|_{L^{2_*}(\p\Omega)}^{\be_2}  \Big) \|u\|_{L^{2^*}(\Om)}^{\be_3},
\end{align*}
where $C_\e=C_\e(N, r, r_B,|\Om |,|\p \Om |,a_1, a_M )>0$  is a constant independent of $u$, and $a_M$ is  given by
\begin{equation}\label{Def:aM}
a_M:=\max \{1, \|a_2\|_{L^{r}(\Om)} , \|a_B\|_{L^{r_B}(\p\Om)}\}.
\end{equation}
\end{cor}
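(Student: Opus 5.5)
The plan is to obtain Corollary \ref{cor1} as the intermediate stage of the proof of Theorem \ref{th:RegEst}. The key idea is to freeze the nonlinearities at the given solution and treat \eqref{pde} as a linear Neumann problem with lower order data.

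\textbf{Step 1 (freezing the data).} Fix a weak solution $u$. By Theorem \ref{th:reg:nol}, $u\in L^\infty(\Om)\cap W^{1,m}(\Om)$, so all quantities below are finite. Introduce
\[
a(x):=\frac{\widehat f(x,u)}{1+|u|}, \qquad b(x):=\frac{f_B(x,u)}{1+|u|}.
\]
Then $u$ solves an auxiliary problem of the type \eqref{eq:W1m:fx:Unif}: $-\Delta u+u=g(x,\na u)+a(x)(1+|u|)$ with $|g|\le a_1|\na u|^l$, and $\p_\nu u=b(x)(1+|u|)$. The aim is to apply the Moser-type estimate of Theorem \ref{th:A}, combined with the gradient regularity of Theorem \ref{th:W1m:unif} (or Theorem \ref{th:reg:2}). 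This should give a bound of the form
\[
\|u\|_{L^\infty}\le C\,\big(1+\|a\|_{L^{q}}+\|b\|_{L^{q_B}(\p\Om)}\big)^{\gamma}\,\|u\|_{L^{2^*}}.
\]
Here the exponents $q>N/2$, $q_B>N-1$ and $\gamma$ depend only on $N,l,r,r_B$. The gradient term $|\na u|^l$ is controlled through $W^{1,m}$ estimates, using $l<\frac{N+2}{N}$ so that it is subcritical with respect to $H^1$. Interpolating its norm between $H^1$ and $W^{1,m}$ via Gagliardo--Nirenberg is what produces an extra power of the $H^1$ or $L^{2^*}$ norm.

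\textbf{Step 2 (bounding $a$ and $b$ by $\widetilde f(\|u\|_\infty)$).} By Hölder,
\[
\|a\|_{L^q}\le \|a_2\|_{L^r}\,\Big\|\tfrac{\widetilde f(|u|)}{1+|u|}\Big\|_{L^{s}}.
\]
Split $\widetilde f(|u|)/(1+|u|)$ by Lebesgue interpolation between $L^\infty$ and $L^{2^*/(2^*_{N/r}-1)}$. The $L^\infty$ factor is at most $\widetilde f(\|u\|_\infty)/\|u\|_\infty$, using monotonicity of $\widetilde f$. The other factor is bounded by powers of $\|u\|_{L^{2^*}}$ via the growth $\widetilde f(s)\le C(1+s^{2^*_{N/r}-1})$. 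The analogous treatment of $b$ uses $L^{2_*}(\p\Om)$. Choose $q,q_B$ just above $N/2$ and $N-1$; this is where $\e$ enters. Inserting this into Step 1, the exponent $\gamma$ times the interpolation weight should make the power of $\widetilde f(\|u\|_\infty)/\|u\|_\infty$ cancel against $\|u\|_\infty$. This exact balance is the reason for the critical exponent $2^*_{N/r}-1$ in $\widetilde h$.

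\textbf{Step 3 (conclusion).} After dividing, one gets
\[
\widetilde h(\|u\|_\infty)\le C\,(\ldots)\,\|u\|_{L^{2^*}}^{\be_3}
\quad\text{or}\quad
\widetilde h_B(\|u\|_\infty)^{\frac{2^*_{N/r}-1}{2_{*,N/r_B}-1}}\le C\,(\ldots)\,\|u\|_{L^{2^*}}^{\be_3},
\]
depending on which of the interior or boundary contributions dominates. Hypothesis \ref{ffB} guarantees that the term $1$ in $(1+\|a\|+\|b\|)$ is absorbed for large $\|u\|_\infty$. Taking the minimum gives $h$ and hence the stated inequality with exponents $\be_1,\be_2,\be_3$. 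Theorem \ref{th:RegEst} then follows by Sobolev and trace embeddings, $\|u\|_{L^{2^*}},\|u\|_{L^{2_*}(\p\Om)}\le C\|u\|_{H^1}$.

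\textbf{Main obstacle.} The hard part is Step 1, namely tracking the precise dependence of the $L^\infty$ bound on $\|a\|_{L^q}$ and $\|b\|_{L^{q_B}}$ when the gradient term is present. The exponent $\gamma$ must match the one needed in Step 2 for the cancellation to go through. Any loss there changes which powers survive, so the bookkeeping of Theorem \ref{th:W1m:unif} must be done carefully, with $\e$ absorbing the small losses from taking $q\downarrow N/2$ and $q_B\downarrow N-1$.
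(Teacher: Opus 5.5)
Your plan has a genuine gap, exactly where you put the ``main obstacle'': the Step 1 estimate is conjectured rather than derived, and the form you write drops the gradient contribution. Theorem \ref{th:A} is purely qualitative ($u\in L^q$ for all $q$), so it gives no bound with explicit dependence on $\|a\|_{L^q}$ and $\|b\|_{L^{q_B}(\p\Om)}$. Your display is homogeneous of degree one in $u$, with a factor depending only on $a,b$. But the only available control of $|\na u|^l$, Theorem \ref{th:W1m:unif}, produces an \emph{additive} term $\|\na u\|_{L^2(\Om)}^{\g}$, and $\g\neq 1$ whenever $l\neq 1$. The ``extra power of the $H^1$ norm'' you mention therefore simply vanishes from your bound.

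The paper instead combines that $W^{1,m}$ estimate with Gagliardo--Nirenberg. It takes $q\in(\frac{N}{2},\min\{r,\widetilde{r_B}\})$ and couples $q_B=\frac{(N-1)q^*}{N}$, so that $m=q^*>N$ and $2_{*,N/q_B}=2^*_{N/q}$. This gives
\[
\|u\|_{L^\infty(\Om)}\le C\Big(\|\na u\|_{L^2(\Om)}^{\g}+\|\widehat f(\cdot,u)\|_{L^q(\Om)}+\|f_B(\cdot,u)\|_{L^{q_B}(\p\Om)}\Big)^{\s}\|u\|_{L^{2^*}(\Om)}^{1-\s},\qquad \tfrac{1}{\s}=2^*_{N/q}-1 .
\]
It then bounds $\|\widehat f(\cdot,u)\|_{L^q}$ by $M^{1-t/q}$ times powers of $\|u\|_{L^{2^*}}$, via $\widetilde f(|u|)^q\le M^{q-t}\widetilde f(|u|)^t$ with $M=\widetilde f(\|u\|_{L^\infty})$, and treats the boundary term the same way.

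The cancellation you hope for is the pair of identities $(2^*_{N/r}-1)(1-\frac{t}{q})\s=1=(2_{*,N/r_B}-1)(1-\frac{t_B}{q_B})\s$. The second holds only because $q_B$ is coupled to $q$, and it is what yields exactly the power $\frac{2^*_{N/r}-1}{2_{*,N/r_B}-1}$ of $\widetilde h_B$ in $h$. Choosing $q$ and $q_B$ independently ``just above $N/2$ and $N-1$'' does not guarantee this matching.

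Your Step 2 bound $\|\widetilde f(|u|)/(1+|u|)\|_{L^\infty}\le \widetilde f(\|u\|_{L^\infty})/\|u\|_{L^\infty}$ is also false in general. Monotonicity of $\widetilde f$ does not make $\widetilde f(s)/(1+s)$ nondecreasing: $\widetilde f(s)=\sqrt{s}$ is admissible and violates it. The paper avoids dividing by $1+|u|$ altogether.

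The second missing ingredient is how the gradient contribution becomes a power of the critical norms, which is what the corollary asserts. The paper divides by $\|u\|_{L^\infty}$, using \ref{ffB} for the gradient term (the only term without a factor $M$ or $M_B$), and raises to the power $2^*_{N/r}-1$. At that point it still has $\|\na u\|_{L^2}^{\g}$ on the right, and it removes it with the energy estimate of Lemma \ref{lem:H1:bd}. You test the equation with $u$ and bound $\int_\Om|\na u|^l|u|\le\|\na u\|_{L^2}^l\|u\|_{L^{2/(2-l)}}$. You absorb this by Young, which works because $l<2$ and $\frac{2}{2-l}<2^*$. Finally, the slightly subcritical growth controls $\|\widehat f(\cdot,u)\|_{L^{(2^*)'}}$ and $\|f_B(\cdot,u)\|_{L^{(2_*)'}(\p\Om)}$, giving
\[
\|u\|_{H^1(\Om)}^2\le Ca_M\Big(1+\|u\|_{L^{2^*}(\Om)}^{\frac{2}{2-l}}+\|u\|_{L^{2^*}(\Om)}^{2^*_{N/r}}+\|u\|_{L^{2_*}(\p\Om)}^{2_{*,N/r_B}}\Big).
\]
Substituting this bound is what produces $\be_1,\be_2$ and brings the boundary norm $\|u\|_{L^{2_*}(\p\Om)}$ into the statement. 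Without it, your argument cannot reach an inequality involving only $\|u\|_{L^{2^*}(\Om)}$ and $\|u\|_{L^{2_*}(\p\Om)}$.
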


\section{Some elliptic regularity estimates}
\label{sec:reg:estim}
In that section, we obtain some elliptic estimates 
for the auxiliary nonlinear problem 
\begin{equation}\label{eq:W1m:fx:Unif}
\left \{
\begin{aligned}
-\Delta u +u=& g(x,\na u), \; \; \; x\in \Om , \\
\frac{\p u}{\p \nu }=& \overline{g}_{B}(x), \;x\in \p \Om ,
\end{aligned}
\right.
\end{equation}
where
\begin{align}\label{g}\tag{$g.g_B$}
&|g(x,\xi )| \le |\xi|^{l} + \overline{g}(x),\q{with}\ \overline{g}\in L^q(\Om),\ \overline{g}_B\in L^{q_B}(\p\Om),\\
&\text{for}\quad  0<l< \tfrac{N+2}{N},\quad  q\ge 1, \q{and for }  q_B\ge 1.\nonumber
\end{align}

First, we collect a well-known elliptic regularity result for a nonhomogeneous  Neumann linear problem,
see \cite[Ch.3 Sec. 6]{Ladyzhenskaya_Ural’tseva},  \cite[Thm. 6.13]{Gilbarg_Trudinger},  \cite[Lem. 2.2]{Mavinga_Pardo} and \cite[Proposition B.1]{CMP_NonlA_2026}.

Let us consider the Neumann linear problem with reactions at the interior and on the boundary
\begin{equation}\label{eq:W1m:fx:fB}
\left \{
\begin{aligned}
-\Delta u +u=& \overline{g}(x), \; \; \; x\in \Om , \\
\frac{\p u}{\p \nu }=& \overline{g}_{B}(x), \;x\in \p \Om , 
\end{aligned}
\right.
\end{equation}
where
\begin{align}\label{overl:g}\tag{$\overline{g}.\overline{g}_B$}
\overline{g}\in L^q(\Om),\q{and } \overline{g}_B\in L^{q_B}(\p\Om),\text{ for } q\ge 1 \text{ and }  q_B\ge 1.
\end{align}

\begin{thm}\label{th:reg} Assume hypothesis \eqref{overl:g}.
If $\p\Om\in C^{0,1}$,   then there exists a unique $u\in W^{1,m}(\Om)$  solving \eqref{eq:W1m:fx:fB} and 
\begin{equation*}%\label{estim:W1m:fxfB:1}
\|u\|_{W^{1,m}(\Om)}\le C\left(\|\overline{g}\|_{L^{q}(\Om)}+\|\overline{g}_B\|_{L^{q_B}(\p\Om)}\right), 
\end{equation*} 
where $m=m(q,q_B)$ is given by
\begin{equation}\label{def:m:q}
m=\min\left\{q^*, \tfrac{Nq_B}{N-1} \right\}\ \text{if}\ q < N, \text{or}\ m=\min\left\{q,\tfrac{Nq_B}{N-1}\right\}\ \text{if}\  q \ge N,
\end{equation}  
and $C$ is independent of $u,\ \overline{g},\  \overline{g}_B$, and $q^*:= \tfrac{Nq}{N-q}.$ 
\end{thm}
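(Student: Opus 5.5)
Theorem \ref{th:reg} is a classical linear result, so the plan is to assemble it from the standard ingredients: an existence and uniqueness step via Lax--Milgram, and then a duality argument for the $W^{1,m}$ estimate, treating the interior and boundary data separately and adding the two contributions by linearity.

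\emph{Existence and uniqueness.} First I reduce to data in the dual of $H^1(\Om)$. If $q\ge (2^*)'$ and $q_B\ge (2_*)'$, the functional $\psi\mapsto \int_\Om \overline g\psi\,dx+\int_{\p\Om}\overline g_B\psi\,dS$ is bounded on $H^1(\Om)$, by the Sobolev embedding and the trace embedding recalled in Section \ref{sec:prel}. The bilinear form $\int_\Om(\na u\na\psi+u\psi)\,dx$ is exactly the $H^1$ inner product. Lax--Milgram therefore gives a unique weak solution $u\in H^1(\Om)$.

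For smaller $q$ or $q_B$ (where $m<2$), I would approximate $\overline g,\overline g_B$ by bounded functions. I would solve each approximate problem and pass to the limit using the a priori $W^{1,m}$ estimate below. Uniqueness in $W^{1,m}$ for $m<2$ then follows by duality, testing against solutions of the adjoint problem with smooth data.

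\emph{The estimate, by duality.} Take $m'$ conjugate to $m$ and $F\in L^{m'}(\Om;\R^N)$, $F_0\in L^{m'}(\Om)$. Let $v$ solve $-\Delta v+v=F_0-\operatorname{div}F$ with the natural conormal condition $\p_\nu v=F\cdot\nu$. Granting the $W^{1,m'}$ theory for this divergence-form Neumann problem, we get $\|v\|_{W^{1,m'}}\le C(\|F\|_{L^{m'}}+\|F_0\|_{L^{m'}})$. Testing the two problems against each other gives
\begin{equation*}
\int_\Om (\na u\cdot F+uF_0)\,dx=\int_\Om \overline g v\,dx+\int_{\p\Om}\overline g_B v\,dS .
\end{equation*}
By Sobolev, $W^{1,m'}\hookrightarrow L^{q'}(\Om)$ exactly when $m\le q^*$ (with the usual modification when $q\ge N$, where $m\le q$ suffices). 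By the trace embedding, $W^{1,m'}\to L^{q_B'}(\p\Om)$ exactly when $q_B'\le (N-1)m'/(N-m')$, which is equivalent to $m\le Nq_B/(N-1)$. This is precisely the definition \eqref{def:m:q}. Taking the supremum over $(F,F_0)$ then yields the desired $W^{1,m}$ bound.

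\emph{Main obstacle.} The hard step is the $W^{1,p}$ solvability of the divergence-form Neumann problem on a merely Lipschitz domain. On such domains this holds only for $p$ in a range around $2$, so for large $m$ some restriction or a citation is unavoidable. I would rely on the references cited just before the statement (Ladyzhenskaya--Ural'tseva, Gilbarg--Trudinger, \cite{Mavinga_Pardo}, \cite{CMP_NonlA_2026}), as the paper does, rather than reprove this step.
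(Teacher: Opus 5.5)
You have found the right weak point: your Lax--Milgram step and the Sobolev/trace bookkeeping in the duality pairing are fine, apart from the endpoint discussed below. But deferring the weak point to the references does not close the gap. For large $m$ you need the adjoint $W^{1,m'}$ estimate with $m'$ near $1$, and on Lipschitz domains this is not merely hard; it is false, and so is the theorem.

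Here is a counterexample. Take $\Omega=D\times(0,1)^{N-2}$, where $D\subset\mathbb{R}^2$ is a polygon with a reentrant corner of opening $\omega\in(\pi,2\pi)$ at the origin. Set $u=\eta(r)\chi(z)\,r^{\lambda}\cos(\lambda\theta)$, with $\lambda=\pi/\omega\in(\tfrac12,1)$, $\eta\equiv1$ near $r=0$, and $\chi\in C_c^\infty((0,1)^{N-2})$.
\begin{itemize}
\item The function $r^{\lambda}\cos(\lambda\theta)$ is harmonic and has zero normal derivative on the sides $\theta=0$ and $\theta=\omega$. The cut-offs only produce bounded terms, and $\partial_\nu u=0$ on all of $\partial\Omega$.
\item Hence $u$ is the unique (by Lax--Milgram) $H^1$ weak solution of \eqref{eq:W1m:fx:fB} with $\overline{g}=-\Delta u+u\in L^\infty(\Omega)$ and $\overline{g}_B=0$.
\item Yet $|\nabla u|\sim r^{\lambda-1}$ near the edge, so $\nabla u\notin L^p(\Omega)$ for every $p\ge 2\omega/(\omega-\pi)$. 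This threshold decreases to $4$ as $\omega\to2\pi$.
\end{itemize}
Since \eqref{def:m:q} gives arbitrarily large $m$ when $q$ and $q_B$ are large, no citation can supply your missing step. For general Lipschitz domains with $N\ge3$, the $W^{1,p}$ theory of the conormal problem holds only on an interval of the form $\big((3+\varepsilon)',3+\varepsilon\big)$. The paper offers nothing beyond the same references, so it inherits this problem. The values of $m$ it uses later ($m>N$, Remark~\ref{rem3}(iv)) fail on suitable Lipschitz domains as soon as $N\ge4$.

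To turn your outline into a proof you have two options. One is to assume more boundary regularity, for instance $\partial\Omega\in C^1$. There the divergence-form Neumann problem is uniquely solvable in $W^{1,p}$ for every $1<p<\infty$, and your Lax--Milgram plus duality scheme works for all $m$ given by \eqref{def:m:q} with $q,q_B>1$. The other is to restrict $m$ to the admissible Lipschitz range.

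There is a second, smaller flaw at the opposite endpoint. For $q=1$ you get $m=N/(N-1)$, $m'=N$ and $q'=\infty$, and $W^{1,N}(\Omega)\not\hookrightarrow L^\infty(\Omega)$. So your claim that $W^{1,m'}\hookrightarrow L^{q'}(\Omega)$ exactly when $m\le q^*$ fails there; the same happens for the trace when $q_B=1$. This is again a defect of the statement rather than of your method. For $L^1$ data the gradient is in general only in weak-$L^{N/(N-1)}$: think of the Green function with pole in $\Omega$ or on $\partial\Omega$, approximated by $L^1$-normalized bumps. So the estimate with $m=N/(N-1)$ fails even on smooth domains, and the theorem should be read with $q>1$ and $q_B>1$.
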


\bigskip

Next, we observe that whenever $q_B>N-1,$ the previous regularity result can be improved in the following sense.

\begin{rem}\label{rem3} 
All throughout this remark, we assume $q_B\in (N-1,\infty)$. 	
\begin{enumerate}[label=\rm{(\roman*)},leftmargin=.2cm]
\item Observe  that, for all $q_B\in (N-1,\infty)$ and $N>2$, the following holds 
\begin{equation}\label{def:qB:tilde}
1<\widetilde{q_B}<N<\frac{Nq_B}{N-1}\qq{where} \widetilde{q_B}:=\frac{Nq_B}{N-1+q_B}.
\end{equation}
This allows us to define the following intervals $R_i=R_i(q_B)$ as
\begin{equation*}%\label{def:R}
\begin{array}{llll}
R_1:=\big[1, \widetilde{q_B}\big),
&R_2:=\big[ \widetilde{q_B}, N\big),
& R_3:=\Big[N, \frac{Nq_B}{N-1}\Big),
&R_4:=\Big[ \frac{Nq_B}{N-1}, \infty \Big).
\end{array}
\end{equation*}
Moreover, since \eqref{def:qB:tilde}, $R_i\cap R_j=\emptyset$ for $i\ne j$, $i,j=1,2,3,4$.

We observe that if $q<N$, then
\begin{equation}\label{tilq:q*}
q\,\lesseqqgtr\, \widetilde{q_B} \iff q^*\,\lesseqqgtr\, \frac{Nq_B}{N-1},
\end{equation}
\smallskip

\item With those definitions of $R_i$, for $m=m(q,q_B)$ defined by \eqref{def:m:q}, using \eqref{tilq:q*} the following holds

\begin{itemize}
\item If $q\in R_1$, then $m=q^*=\min\left\{q^*, \tfrac{Nq_B}{N-1} \right\};$ 
\item if $q\in R_2$, then $m=\frac{Nq_B}{N-1}=\min\left\{q^*, \tfrac{Nq_B}{N-1} \right\};$ 
\item if  $q\in R_3$, then $m=q<\frac{Nq_B}{N-1}$;
\item if $q\in R_4$, then $m=\frac{Nq_B}{N-1}=\min\left\{q^*, \tfrac{Nq_B}{N-1} \right\}.$
\end{itemize} 
\smallskip

\item Whenever $\ q\in R_3$, we can always choose any $\ \overline{q}\in  R_2$, $\overline{q}<q$, such that $\overline{g}\in L^{\overline{q}}(\Om)$,  and $(\overline{q})^*\ge\frac{Nq_B}{N-1}>q.$ Hence, the corresponding $m$ can be chosen as $m(\overline{q},q_B)=\frac{Nq_B}{N-1}>q=m(q,q_B).$ 
Redefining $m$ with the above criteria, we can  write
\begin{equation}\label{new:m}
m:=
\begin{cases}
q^* &\q{if} q<\widetilde{q_B}, \\
\frac{Nq_B}{N-1} &\q{if} q\ge \widetilde{q_B}.
\end{cases}
\end{equation}
\smallskip

\item For any $q>N/2$ and $q_B>N-1$, the inequality $m>N$  always holds for $m$ given by \eqref{new:m}.
\end{enumerate}
\end{rem}

Now, we rewrite Theorem \ref{th:reg} taking into account the Remark \ref{rem3}.(iii). 

\begin{thm}\label{th:reg:2}
Assume that the hypothesis of Theorem \ref{th:reg} holds. 
Assume also that $q_B\in (N-1,\infty)$. 
Let $u\in H^{\, 1}(\Om )$ be a weak solution to the linear problem \eqref{eq:W1m:fx:fB}.
Then, $u\in W^{1,m}(\Om )$, where  $m$ is defined by \eqref{new:m} and if $q>N/2$, then $m>N$. Moreover,    there exists a constant $C>0$  independent of $u$, $\overline{g}$ and $\overline{g}_{B}$ such that the following hold:
\smallskip

(i) If $\ q<\widetilde{q_B}$, then
\begin{equation*}%\label{estim:W1m:fxfB:1:2}
\|u\|_{W^{1,q^*}(\Om)}\le C\Big(\|\overline{g}\|_{L^{q}(\Om)}+\|\overline{g}_B\|_{L^{q_B}(\p\Om)}\Big), 
\end{equation*} 
\smallskip

(ii) If $\ q\ge \widetilde{q_B}$, then
\begin{equation*}%\label{estim:W1m:fxfB:1:tilde:ii}
\|u\|_{W^{1,\frac{Nq_B}{N-1}}(\Om)} \le C\Big(\|\overline{g}\|_{L^{\widetilde{q_B}}(\Om)} + \|\overline{g}_B\|_{L^{q_B}(\p\Om)}\Big).
\end{equation*} 
\end{thm}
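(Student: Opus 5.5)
The argument is a direct application of Theorem \ref{th:reg}, with the exponent choices of Remark \ref{rem3}. Since $u\in H^1(\Om)$ is a weak solution of the linear problem \eqref{eq:W1m:fx:fB}, and Theorem \ref{th:reg} gives a \emph{unique} $W^{1,m}$ solution, we first identify $u$ with that solution. To do so, we check that the $W^{1,m}$ solution lies in $H^1(\Om)$, or more simply invoke uniqueness of weak $H^1$ solutions to the coercive Neumann problem: the bilinear form $\int_\Om(\na u\na\psi+u\psi)\,dx$ is the $H^1$ inner product, so the difference of two weak solutions is orthogonal to itself. When $m\ge 2$ the $W^{1,m}$ solution is in $H^1$ because $\Om$ is bounded. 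When $m<2$ we argue the other way: $u\in H^1\subset W^{1,m}$ also solves the problem in the $W^{1,m}$ weak sense, so uniqueness in $W^{1,m}$ identifies the two. In either case $u\in W^{1,m}(\Om)$ with the estimate of Theorem \ref{th:reg}.

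\emph{Case (i), $q<\widetilde{q_B}$.} We apply Theorem \ref{th:reg} with the given $q,q_B$. Since $q<\widetilde{q_B}<N$, formula \eqref{def:m:q} gives $m=\min\{q^*,Nq_B/(N-1)\}$. By \eqref{tilq:q*}, $q<\widetilde{q_B}$ implies $q^*<Nq_B/(N-1)$, so $m=q^*$ and the estimate is exactly (i).

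\emph{Case (ii), $q\ge\widetilde{q_B}$.} Since $\Om$ is bounded, $\overline g\in L^{q}(\Om)\subset L^{\widetilde{q_B}}(\Om)$ with $\|\overline g\|_{L^{\widetilde{q_B}}}\le C(|\Om|)\|\overline g\|_{L^q}$. We then apply Theorem \ref{th:reg} with the pair $(\widetilde{q_B},q_B)$ in place of $(q,q_B)$. Because $\widetilde{q_B}<N$ by \eqref{def:qB:tilde}, we get $m=\min\{(\widetilde{q_B})^*,Nq_B/(N-1)\}$. Equality holds in \eqref{tilq:q*} at $q=\widetilde{q_B}$; indeed a direct computation gives $(\widetilde{q_B})^*=\frac{Nq_B}{N-1}$. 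Hence $m=Nq_B/(N-1)$, and the bound involves $\|\overline g\|_{L^{\widetilde{q_B}}}$, which is (ii). This is the improvement described in Remark \ref{rem3}(iii): we deliberately lower the interior integrability to reach the boundary-limited exponent.

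\emph{The claim $m>N$ when $q>N/2$.} In case (ii), $Nq_B/(N-1)>N$ since $q_B>N-1$. In case (i), $q>N/2$ gives $q^*>N$, as $Nq/(N-q)>N\iff q>N/2$; this case also needs $q<N$, which holds because $q<\widetilde{q_B}<N$. This is Remark \ref{rem3}(iv).

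The only delicate step is the identification of the $H^1$ weak solution with the $W^{1,m}$ solution when $m<2$, together with the meaning of the boundary term for low $q_B$. Since here $q_B>N-1\ge 2$ and the estimates are linear, the duality argument above should suffice.
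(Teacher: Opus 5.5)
Your proposal is correct and follows the paper's route. The paper obtains this theorem by restating Theorem \ref{th:reg} with the exponent reduction of Remark \ref{rem3}(iii): for $q\ge\widetilde{q_B}$ it uses a lower interior exponent in $R_2$, which you take to be exactly $\widetilde{q_B}$ via $(\widetilde{q_B})^*=\frac{Nq_B}{N-1}$. It then uses Remark \ref{rem3}(iv) for $m>N$. Your explicit identification of the $H^1$ weak solution with the unique $W^{1,m}$ solution is a detail the paper leaves implicit.
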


\begin{rem}
Using the definition of $m$ \eqref{new:m}, the conclusions of Theorem \eqref{th:reg:2} can be rewritten in a unified way as follows
\begin{equation*}%\label{estim:min:q:qBtilde}
\|u\|_{W^{1,m}(\Om)} \le C\Big(\|\overline{g}\|_{L^{\min\{q,\widetilde{q_B}\}}(\Om)} + \|\overline{g}_B\|_{L^{q_B}(\p\Om)}\Big),
\end{equation*}
where $m$ is defined by \eqref{new:m}.
\end{rem}

Our purpose is now to estimate the $W^{1,m}(\Omega)$-norm of $u$ solving \eqref{eq:W1m:fx:Unif}, involving a power of the $\|\na u\|_{L^{2}(\Omega)}$-norm.

\begin{thm}\label{th:W1m:unif}
Let $u\in H^{\, 1}(\Om )$ be a weak solution to \eqref{eq:W1m:fx:Unif}
where $g,\ \overline{g}_B$ satisfy \eqref{g}, $\overline{g}\in L^q(\Om)$,  for  $ q>\tfrac{N}{2}$ and  $\overline{g}_B\in L^{q_B}(\p\Om)$ for $q_B> N-1$.\\
Then $u\in W^{1,m}(\Omega)$ where  $m=m(q,q_B)>N$ is defined by \eqref{new:m}, and there exists a constant $C=C(N, \Om , l, q, q_B)>0$, independent of $u,\overline{g},\overline{g}_B$, such that the following holds
\begin{equation}\label{estim:W1m:unificado}
\|u\|_{W^{1,m}(\Omega)} \le C\Big( \|\nabla u\|_{L^2(\Omega)}^{\gamma}
+ \|\overline{g}\|_{L^{\min\{q,\widetilde{q_B}\}}(\Omega)} + \|\overline{g}_B\|_{L^{q_B}(\partial\Omega)}\Big),
\end{equation}
where $\g=\g(N,l, q, q_B)>0$  is given as follows
\begin{equation}\label{Def:gamma}
 \begin{cases}
{\rm (I)}\ \ \ \gamma := l,\ \text{if } \widehat{q}<\widetilde{q_B}\text{ and } l\widehat{q}\le 2, &\text{or }  \widehat{q}\ge \widetilde{q_B}\text{ and }l\widetilde{q_B}\le 2,\\[2mm]
{\rm (II)}\ \ \gamma := \dfrac{2\,[\frac{N}{q}(1-l)+l]}{N(1-l)+2}, &\text{if } \; \widehat{q}\le \widetilde{q_B}\text{ and }l\widehat{q}> 2,\\[2mm]
{\rm (III)}\ \gamma := \dfrac{2\,[\frac{N-1}{q_B}(1-l)+1]}{N(1-l)+2},&\text{if } \; 
\widehat{q}\ge\widetilde{q_B}\text{ and }l\widetilde{q_B}> 2,
\end{cases} 
\end{equation}
for $\widetilde{q_B}$ given by \eqref{def:qB:tilde} and
\begin{equation}\label{def:q:hat}
\widehat{q}:=\min\Big\{\frac{m}{l},q\Big\}.
\end{equation}
\end{thm}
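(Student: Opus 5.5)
We treat \eqref{eq:W1m:fx:Unif} as the linear problem \eqref{eq:W1m:fx:fB} with interior datum $G(x):=g(x,\na u(x))$, so that $|G|\le |\na u|^l+\overline{g}$. By Theorem \ref{th:reg:2}, in the unified form of the subsequent remark,
\begin{equation*}
\|u\|_{W^{1,m}(\Om)}\le C\Big(\||\na u|^l\|_{L^{s}(\Om)}+\|\overline{g}\|_{L^{\min\{q,\widetilde{q_B}\}}(\Om)}+\|\overline{g}_B\|_{L^{q_B}(\p\Om)}\Big),
\qquad s:=\min\{\widehat{q},\widetilde{q_B}\},
\end{equation*}
provided $G\in L^{s}$ and the value of $m$ produced by $s$ agrees with \eqref{new:m}. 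The whole task is therefore to bound $\||\na u|^l\|_{L^s}=\|\na u\|_{L^{ls}}^l$ by $\|\na u\|_{L^2}^{\g}$, up to a small multiple of $\|u\|_{W^{1,m}}$ that can be absorbed.

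\textbf{Regularity bootstrap.} We first check that $u\in W^{1,m}$, so that all the norms involved are finite. Start from $\na u\in L^2$, so $|\na u|^l\in L^{2/l}$ with $2/l>(2^*)'$. Theorem \ref{th:reg} then gives $\na u\in L^{m_1}$ with $m_1=\min\{(2/l)^*,Nq_B/(N-1)\}$. Iterate: $|\na u|^l\in L^{m_k/l}$, hence $m_{k+1}=\min\{(m_k/l)^*,\dots\}$. The condition $l<\frac{N+2}{N}$ ensures $(t/l)^*>t$ for $t$ near $2$, and the gain increases along the iteration. After finitely many steps the exponent reaches $\min\{q,\widetilde{q_B}\}$, and we obtain $u\in W^{1,m}$ with $m$ as in \eqref{new:m}. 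This also justifies $\widehat{q}=\min\{m/l,q\}$: it is exactly the integrability of $G$ that is available.

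\textbf{Case (I).} If $ls\le 2$ (that is, $l\widehat{q}\le 2$ when $\widehat{q}<\widetilde{q_B}$, or $l\widetilde{q_B}\le 2$ otherwise), Hölder on the bounded domain gives $\|\na u\|_{L^{ls}}^l\le C\|\na u\|_{L^2}^l$. Hence $\g=l$.

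\textbf{Cases (II) and (III).} If $ls>2$, interpolate between $L^2$ and $L^m$:
\begin{equation*}
\|\na u\|_{L^{ls}}\le \|\na u\|_{L^2}^{\theta}\|\na u\|_{L^m}^{1-\theta},\qquad \frac{1}{ls}=\frac{\theta}{2}+\frac{1-\theta}{m}.
\end{equation*}
This could also be done with Gagliardo–Nirenberg, which is what produces the factor $N(1-l)+2$ in the denominators. The result is $\||\na u|^l\|_{L^s}\le C\|\na u\|_{L^2}^{l\theta}\|u\|_{W^{1,m}}^{l(1-\theta)}$. If $l(1-\theta)<1$, Young's inequality with $\e$ gives
\begin{equation*}
\e\|u\|_{W^{1,m}}+C_\e\|\na u\|_{L^2}^{\frac{l\theta}{1-l(1-\theta)}},
\end{equation*}
and the first term is absorbed into the left-hand side. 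It remains to compute the exponent $\frac{l\theta}{1-l(1-\theta)}$. Take $m=q^*$ and $s=q$ in case (II), and $m=\frac{Nq_B}{N-1}$ and $s=\widetilde{q_B}$ in case (III). Using $\frac1{q^*}=\frac1q-\frac1N$ and $\frac{1}{\widetilde{q_B}}=\frac1N+\frac{N-1}{Nq_B}$, the exponent should simplify to the expressions in \eqref{Def:gamma}; we would verify this algebraically.

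\textbf{Main obstacle.} The hard step is proving $l(1-\theta)<1$, which is what allows the absorption. We would rewrite it, via the formula for $\theta$, as an inequality between $l$, $q$ (or $q_B$) and $N$, and deduce it from $l<\frac{N+2}{N}$ together with $q>N/2$ and $q_B>N-1$. If it fails for $l>1$ near the upper bound, the fallback is to interpolate with an intermediate exponent $m_k$ from the bootstrap instead of $m$.
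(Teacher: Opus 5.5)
Your outline matches the paper's proof: a bootstrap with Theorem~\ref{th:reg:2} to reach $W^{1,m}$, then Lebesgue interpolation of $\|\nabla u\|_{L^{ls}}$ between $L^2$ and $L^m$, Young's inequality with exponent $1/(l(1-\theta))$, and absorption. The step you call the main obstacle is in fact immediate, and no fallback is needed. In both cases $m=s^*$ (for case (III), $(\widetilde{q_B})^*=\tfrac{Nq_B}{N-1}$), i.e.\ $\tfrac1m=\tfrac1s-\tfrac1N$. Hence
\[
1-l(1-\theta)=\frac{\big(\tfrac12-\tfrac1m\big)-l\big(\tfrac12-\tfrac{1}{ls}\big)}{\tfrac12-\tfrac1m}=\frac{\tfrac{1-l}{2}+\tfrac1N}{\tfrac12-\tfrac1m},
\]
which is positive exactly when $l<\tfrac{N+2}{N}$, whatever $q$ and $q_B$ are. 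The same computation gives $l\theta\big/\big(1-l(1-\theta)\big)=\dfrac{2\big[\tfrac Ns(1-l)+l\big]}{N(1-l)+2}$. This is (II) for $s=q$, and (III) for $s=\widetilde{q_B}$ after using $\tfrac{N}{\widetilde{q_B}}=\tfrac{N-1}{q_B}+1$. So the factor $N(1-l)+2$ comes from the Young step, not from Gagliardo--Nirenberg. Your fallback would go the wrong way anyway: interpolating against a smaller exponent $m_k<m$ only increases $1-\theta$.

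You also use, without checking, two facts that the paper verifies.
\begin{itemize}
\item The interpolation needs $ls<m$. In case (II), $q^*>lq$ is equivalent to $q>N(1-\tfrac1l)$, which holds because $q>\tfrac N2>\tfrac{2N}{N+2}>N(1-\tfrac1l)$. In case (III), $\tfrac{Nq_B}{N-1}>l\widetilde{q_B}$ is equivalent to $q_B>(l-1)(N-1)$, which holds because $(l-1)(N-1)<N-1<q_B$.
\item Setting $s=q$, $m=q^*$ in case (II) requires that $\widehat q<\widetilde{q_B}$ forces $q<\widetilde{q_B}$ and $\widehat q=q$. This is the paper's Claim~2, which follows from the same two inequalities.
\end{itemize}
For the bootstrap, your \emph{increasing gain} remark can be made precise. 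While $m_k/l<\min\{q,\widetilde{q_B}\}$, one has $\tfrac1{m_k}-\tfrac1{m_{k+1}}=\tfrac1N-\tfrac{l-1}{m_k}\ge\min\{\tfrac1N,\tfrac1N-\tfrac{l-1}{2}\}>0$, with $m_k\ge 2$ by induction. This gives finite termination directly, and is a little cleaner than the paper's contradiction argument via the fixed point $N(1-\tfrac1l)$. Note that the first interior exponent is $\min\{2/l,q\}$, not $2/l$.
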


\begin{proof}
The proof is based on a four-step regularizing procedure, proving that $u\in W^{1,m}(\Om )$, and a fifth step to reach  the estimate \eqref{estim:W1m:unificado}.   
We assume throughout all the proof that $0< l< \tfrac{N+2}{N}$. 
\\

We use a bootstrap procedure based on Theorem \ref{th:reg:2}. Given $m_0:=2$, for $u\in W^{1,m_{j}}(\Om)$, $j\ge 0$,  defining, 
\begin{equation}\label{def:qj}
q_j:=\min \left\{ \frac{m_j}{l}, q\right\}, \qq{so}
|\nabla u|^l + \overline{g} \in L^{q_j}(\Om),
\end{equation}
and 
\begin{equation}\label{def:mj}
m_{j+1}:=\left.
\begin{cases}
q_j^*, & \text{if } q_j<\widetilde{q_B}\\[1mm]
\frac{Nq_B}{N-1}, & \text{if } q_j\ge\widetilde{q_B} 
\end{cases} \right\}, \q{then} u\in W^{1,m_{j+1}}(\Om).
\end{equation} 
In a finite number of iterations, we prove that,  $m_j\ge m$, and  $u\in W^{1,m}(\Om )$. We reason by contradiction, assuming that $\{q_j\},\ \{m_j\}$ are sequences with infinitely many  elements and that $m_{j}<m$ for all $j$.
\bigskip

In Step 1 we prove  that if there exists $j_0\in \mathbb{N}$ such that $q_{j_0}\ge q$, then  $u\in W^{1,m}(\Om )$ and this part of the proof is finished.	

Assume, on the contrary, that  the following holds 
\begin{equation}\label{a1}\tag{\bf{a.1}}
q_j =\frac{m_j}{l}<q, \quad \text{for all }\quad j\in \N.
\end{equation}

Under hypotheses \eqref{a1}, we prove in step 2  that if there exists  $j_0\in \mathbb{N}$ such that $q_{j_0}\ge \widetilde{q_B}$, then $m_{j_0 +1}\ge m$,  and $u\in W^{1,m}(\Om )$.

Assume then 
\begin{equation}\label{a2}\tag{\bf{a.2}}
q_j<\widetilde{q_B}, \ \text{for all}\ j\in \N\implies m_{j+1}=q_j^*, \ \text{for all}\ j\in \N.
\end{equation} 

In step 3, under hypotheses \eqref{a1}--\eqref{a2},  we conclude that, 
\begin{equation}\label{qj:2N}
\{ q_j\}_{j\ge 1} \subset  \big(\tfrac{2N}{N+2},\widetilde{q_B}\big), \q{and }\{ q_j\}_{j\ge 1} \text{ is increasing}.
\end{equation}

Step 4 is devoted to prove that there are no increasing sequences $\{q_j\}_{j=1}^\infty\subset  \big(\frac{2N}{N+2},\widetilde{q_B}\big) $, concluding that $u\in W^{1,m}(\Om )$. 

Let us carry out the procedure.
\medskip

\paragraph{\bf{Step 1}} {\it 
If there exists $j_0\in \mathbb{N}$ such that $q_{j_0}\ge q$, then $u\in W^{1,m}(\Om )$.	}
\medskip

\noindent  
Indeed, assume on the one hand that $ q\ge \widetilde{q_B}$.  
Then  $q_{j_0}\ge q\ge \widetilde{q_B}$ and $m_{j_0+1}= \frac{Nq_B}{N-1}= m$, see \eqref{new:m}. \\
\noindent   Assume on the other hand that $q<\widetilde{q_B}$, so $m=q^*$ (see \eqref{new:m}).  Obviously, if $q_{j_0}\ge q$, then $|\nabla u|^l + \overline{g} \in L^{q}(\Om)$ (see \eqref{def:qj}), and, by Theorem \ref{th:reg:2},  $\ u\in W^{1,q^*}(\Om)$.
In both cases, the proof of Step 1 is achieved.

\bigskip

\paragraph{\bf{Step 2}} 
{\it Assume hypothesis \eqref{a1}. If there exists  $j_0\in \mathbb{N}$ such that  $q_{j_0}\ge\widetilde{q_B}$,
then $m_{j_0 +1}=\frac{Nq_B}{N-1}\ge m$, and  $u\in W^{1,m}(\Om )$.	
}

\medskip

\noindent Assume that $q_{j_0}\ge\widetilde{q_B}$,
by \eqref{def:mj}, we  deduce that $m_{j_0+1}=\frac{Nq_B}{N-1}\ge m$. 

\bigskip

\paragraph{\bf{Step 3}} {\it Assume hypothesis \eqref{a1} and \eqref{a2}. Then, \eqref{qj:2N} holds.
}

\medskip

\noindent First, in the following claim we analyze the monotonicity of the sequence $\{q_j\}_{j\ge 0}.$
\medskip

{\it Claim 1: Assume  hypothesis \eqref{a1} and \eqref{a2}. Then}
\begin{equation}\label{1:l:monot}
q_{j+1}> q_j \iff q_j > N\left( 1-\tfrac{1}{l}\right).    
\end{equation}
Indeed, observe that $q_j<\widetilde{q_B}\iff \frac{N-1}{Nq_B}+\frac{1}{N}<\frac{1}{q_j}$, since  \eqref{def:qB:tilde}, $q_j<N$,  then  $\frac{N-1}{Nq_B}<\frac{1}{q_j^*}$, using \eqref{def:mj} $m_{j+1}=q_j^*$, and using \eqref{a1}, $q_{j+1}=q_j^*/l$. \\
Obviously, $q_{j+1}> q_j $ if and only if $  \frac{1}{q_j}> l\big( \frac{1}{q_j}-\frac{1}{N}\big)$ or equivalently $ \frac{1}{N}> \left( 1-\frac{1}{l}\right)\frac{1}{q_j}$, in other words $q_j > N\left( 1-\frac{1}{l}\right)$, ending the proof of Claim 1.

\bigskip

\noindent Now, we go back to the proof of Step 3. Since \eqref{a1} we observe that 
\begin{equation}\label{q0:2N}
q_0=\tfrac{2}{l}>\tfrac{2N}{N+2}>1 \q{for all } l<\tfrac{N+2}{N},
\end{equation}
and that
\begin{align}	
\label{l:12N}
&\tfrac{2N}{N+2}> N\left( 1-\tfrac{1}{l}\right)\iff l< \tfrac{N+2}{N} \iff\tfrac{2}{l}> N\left( 1-\tfrac{1}{l}\right).
\end{align}
Using \eqref{q0:2N} and \eqref{l:12N}, we deduce that 
\begin{equation}\label{q0:}
q_0>\tfrac{2N}{N+2}> N\left( 1-\tfrac{1}{l}\right).   
\end{equation}
Since \eqref{q0:} and \eqref{a2}, $q_0\in R_1\cap \big(\frac{2N}{N+2},\widetilde{q_B}\big),$
and by Claim 1, \eqref{1:l:monot}, and \eqref{q0:},  $q_j$ is increasing.  Finally, using hypothesis \eqref{a2}, $q_j$ satisfies \eqref{qj:2N} for all $j\ge 0$, ending the  proof of Step 3.

\bigskip

\paragraph{\bf{Step 4}} {\it Assume hypothesis \eqref{a1}--\eqref{a2}. 
Then, there are not increasing sequences 
with infinitely many elements 
$\{q_j\}_{j=0}^\infty\subset  \big(\frac{2N}{N+2},\widetilde{q_B}\big) $.}

\medskip

\noindent We  proceed by contradiction, using step 3, there exists an  increasing sequence, $\{q_j\}\subset  \big(\frac{2N}{N+2},\widetilde{q_B}\big) $.
Then, there exists 
\begin{equation}\label{Lim:2}
\lim _{j\to \infty }q_{j}=:q_{\infty} \in \big[\tfrac{2N}{N+2},\widetilde{q_B}\big].
\end{equation} 
Using \eqref{a2}, \eqref{a1}$_{j+1}$,  and taking limits
$
\tfrac{1}{q_\infty}= l\big( \tfrac{1}{q_\infty}-\tfrac{1}{N}\big)\iff q_\infty =N\left( 1-\tfrac{1}{l}\right),
$
which  contradicts  \eqref{Lim:2} and \eqref{l:12N}.
\bigskip

As a conclusion of  Steps 1 --  4, in a finite number of iterations, $u\in W^{1,m}(\Om )$ where  $m$ is defined by \eqref{new:m}.	
\bigskip

\paragraph{\bf Step 5}{\it The estimate \eqref{estim:W1m:unificado} holds.}
\medskip

\noindent Since $\ u\in W^{1,m}(\Om)$, then
$
\ |\nabla u|^l  \in L^{m/l}(\Om)\q{with}\|\,|\nabla u|^l\|_{L^{m/l}(\Omega)}= 
\|\nabla u\|_{L^{m}(\Omega)}^{\,l},\;\, 
$
and $|\nabla u|^l + \overline{g} \in L^{\widehat{q}}(\Om),\ $ for  $\widehat{q}$ defined by \eqref{def:q:hat}. Using Theorem \ref{th:reg:2}, the following holds:

(i) If $\ \widehat{q}<\widetilde{q_B}$, then $u\in W^{1,(\widehat{q})^*}(\Om )$, and 
\begin{equation}\label{estim:W1m:fxfB:i1} 
\|u\|_{W^{1,(\widehat{q})^*}(\Om)}\le C\Big(\| |\na u|\|^l_{L^{l\widehat{q}} (\Om)}+\|\overline{g}\|_{L^{\widehat{q}} (\Om)} +\|\overline{g}_B\|_{L^{q_B}(\p\Om)}\Big). 
\end{equation} 

(ii) If $\ \widehat{q}\ge \widetilde{q_B}$, then $u\in W^{1,\frac{Nq_B}{N-1}}(\Om )$, and 
\begin{equation}\label{estim:W1m:fxfB:1:tilde:ii2}  
\|u\|_{W^{1,\frac{Nq_B}{N-1}}(\Om)}\le C\Big(
\| |\na u|\|^l_{L^{l\widetilde{q_B}} (\Om)}
+\|\overline{g}\|_{L^{\widetilde{q_B}} (\Om)}
+\|\overline{g}_B\|_{L^{q_B}(\p\Om)}\Big), 
\end{equation} 
where $\widetilde{q_B}$ is defined in \eqref{def:qB:tilde}.

\medskip

To obtain estimate \eqref{estim:W1m:unificado}, we proceed as follows. We next prove in Claim 2 that $(\widehat{q})^*= m$. In Claim 3 we prove that the estimate \eqref{estim:W1m:unificado} holds when $\ l\,\min\{\widehat{q},l\widetilde{q_B}\}\le 2$.
In Claim 4 we analyze the case when $l\widehat{q}> 2$, or $l\widetilde{q_B}> 2$, using a convenient interpolation inequality.

\medskip
 
\noindent Assume on the one hand that $\widehat{q}<\widetilde{q_B}$. 
By definition, see  \eqref{def:q:hat}, either $\widehat{q}=\frac{m}{l}$ 
or $\widehat{q}=q$. Once proved Claim 2, we have that $(\widehat{q})^*= m$.\\
On the other hand, if $\widehat{q}\ge \widetilde{q_B}$, then $q\ge \widetilde{q_B}$ and $m=\frac{Nq_B}{N-1} $, see \eqref{new:m}.\\
In both cases, the  RHS of the estimates \eqref{estim:W1m:fxfB:i1}--\eqref{estim:W1m:fxfB:1:tilde:ii2}  are estimates of the $W^{1,{m}}(\Om )$-norm.

\medskip

{\it Claim 2: Assume $\ \widehat{q}<\widetilde{q_B}$, case {\rm (i)}, then 
} 
\begin{align*}
&\widehat{q}=q<\widetilde{q_B},\qq{and }
u\in W^{1,(\widehat{q})^*}(\Om )\text{ with }  (\widehat{q})^*= m=q^*.
\end{align*} 
First, we observe that if  $\ \widehat{q}<\widetilde{q_B}$, then the following holds:
\begin{enumerate}[label=\rm{C$_2$.\arabic*)},leftmargin=.2cm]
\item 
\label{m:q*}
$q<\widetilde{q_B}$ and $m=q^*$;
\item %\label{C2.2} 
\label{hat:q*}
$(\widehat{q})^*\ge m ; $ 
\item \label{C2.3}
$\widehat{q}=q.$
\end{enumerate}
Indeed, either $\widehat{q}=q <\frac{m}{l}$ or $\widehat{q}=\frac{m}{l}\le q$. 
Obviously, if $\widehat{q}=q<\widetilde{q_B}$, then $(\widehat{q})^*=q^*=m$ and so \ref{m:q*}-\ref{hat:q*}-\ref{C2.3} hold in that case. \\
Assume now that $\widehat{q}=\frac{m}{l}<\widetilde{q_B}.$ 

\begin{enumerate}[label=\rm{C$_2$.\arabic*)}, leftmargin=.2cm]
\item 
Assume on the contrary that \ref{m:q*} do not hold, so $ q\ge \widetilde{q_B}$ and $m=\frac{Nq_B}{N-1}$. Then,
$\widehat{q}=\frac{m}{l}<\widetilde{q_B}\iff
\frac{l}{m}<\frac{1}{\widetilde{q_B}}\iff \frac{l(N-1)}{Nq_B}<\frac{N-1}{Nq_B}+\frac{1}{N}\iff (l-1)(N-1)<q_B.$ Besides, $(l-1)(N-1)<\frac{2}{N}(N-1)<(N-1)<q_B,$ hence all those inequalities hold, implying that $\widehat{q}>\widetilde{q_B},$ which contradicts hypothesis $ \widehat{q}<\widetilde{q_B}$ in Claim 2.  Hence,   \ref{m:q*} hold.

\item Assume that $\widehat{q}=\frac{m}{l}<\widetilde{q_B}<N,$ see \eqref{def:qB:tilde}. Observe that $(\widehat{q})^*=(\frac{m}{l})^*> m$  if and only if $\frac{l}{m}-\frac{1}{N}<\frac{1}{m}$, which in turn is equivalent to $\frac{l-1}{m} <\frac{1}{N}$, or $m> (l-1)N$, which easily holds since $m>N>2=\frac{2}{N}N>(l-1)N$, where we have used Remark \ref{rem3}(iv) ensuring that $m>N$. Hence, \ref{hat:q*} hold.
\end{enumerate}

\begin{enumerate}[label=\rm{C$_2$.3)}, leftmargin=.2cm]
\item Using the first equivalence in \eqref{l:12N}, and that $\tfrac{N}{2}< q \le \widehat{q}<\widetilde{q_B}<N,$ we can write
\begin{align}\label{q:N2}
&q>\tfrac{N}{2}>\tfrac{2N}{N+2}>N\big(1-\tfrac{1}{l}\big)\implies \tfrac{1}{N}>\tfrac{1}{q}\big(1-\tfrac{1}{l}\big),\\
\label{q:N2:2}
&\text{and }\ \tfrac{1}{N}>\tfrac{1}{q}\big(1-\tfrac{1}{l}\big)\iff \tfrac{1}{ql}>\tfrac{1}{q}-\tfrac{1}{N}\iff q^*>ql.
\end{align}
Besides, using \ref{m:q*}, it turns out that $m=q^*,$  so  $m>lq$, $\ \widehat{q}=q$ and \ref{C2.3} hold.
\end{enumerate}
\bigskip

{\it Claim 3: Assume that 
\begin{align*}%\label{lq2:0}
\text{either}\quad l\widehat{q}\le 2& \qq{when}  \widehat{q}<\widetilde{q_B},   \\
\text{or}\qquad l\widetilde{q_B}\le 2 &\qq{when}   \widehat{q}\ge\widetilde{q_B}   ,\nonumber
\end{align*}
for $\widetilde{q_B}$ defined by \eqref{def:qB:tilde}. Then,  the estimate \eqref{estim:W1m:unificado} is achieved.}
\medskip

\noindent Indeed, if $\ l\widehat{q}\le 2$, then
$
L^{2}(\Omega)\hookrightarrow L^{l\widehat{q}}(\Omega)
$
and
\begin{equation}\label{power-L2}
\|\nabla u\|_{L^{l\widehat{q}}(\Om)}
\le |\Omega|^{\, \frac{1}{l\widehat{q}}-\frac{1}{2}}\,
\|\nabla u\|_{L^{2}(\Om)}.
\end{equation}
Substituting \eqref{power-L2} into \eqref{estim:W1m:fxfB:i1}, and using Claim  2, for $\ \widehat{q}<\widetilde{q_B}$ we get 
\begin{equation*}%\label{bound-with-L2}
\|u\|_{W^{1,(\widehat{q})^*}(\Om)} \le C\Big(\|\nabla u\|_{L^{2}(\Om)}^{\,l}
+ \|\overline g\|_{L^{\widehat{q}}(\Om)} + \|\overline{g}_B\|_{L^{q_B}(\p\Om)}\Big).
\end{equation*}
Likewise, if $\ l\widetilde{q_B}\le 2$,  then
$
L^{2}(\Omega)\hookrightarrow L^{l\widetilde{q_B}}(\Omega)
$
and
\begin{equation}\label{power-L2:2}
\|\nabla u\|_{L^{l\widetilde{q_B}}(\Om)}
\le |\Omega|^{\,\frac{1}{l\widetilde{q_B}}-\frac{1}{2}}\,
\|\nabla u\|_{L^{2}(\Om)}.
\end{equation}
Substituting \eqref{power-L2:2} into  \eqref{estim:W1m:fxfB:1:tilde:ii2}, for $\ \widehat{q}\ge \widetilde{q_B}$ we get
\begin{equation*}%\label{estim:W1m:fxfB:i112} 
\|u\|_{W^{1,\frac{Nq_B}{N-1}}(\Om)}\le C\Big(\|\nabla u\|_{L^{2}(\Om)}^{\,l}
+ \|\overline g\|_{L^{\widetilde{q_B}}(\Om)} + \|\overline{g}_B\|_{L^{q_B}(\p\Om)}\Big),
\end{equation*}
ending  the proof of Claim 3.
\bigskip

{\it Claim 4: Assume that 
\begin{align}\label{lq2}
\text{either}\quad l\widehat{q}> 2& \qq{when}  \widehat{q}<\widetilde{q_B},  \\
\text{or}\qquad l\widetilde{q_B}>2&\qq{when}\widehat{q}\ge\widetilde{q_B}.\nonumber
\end{align}
for $\widetilde{q_B}$ defined by \eqref{def:qB:tilde}. Then,  the estimate \eqref{estim:W1m:unificado} is achieved.}
\medskip

\noindent From now on, we use Lebesgue interpolation inequalities, see  \cite[Remark 2, p. 93]{Brezis}.
To treat  the estimates \eqref{estim:W1m:fxfB:i1}-\eqref{estim:W1m:fxfB:1:tilde:ii2} in an unified way, using  Claim  2 when $\widehat{q}<\widetilde{q_B},$ we  define $(\eta,\kappa )$, in terms of $N,l,q,q_B$:
\begin{align}\label{def:eta:kap}
(\eta, \kappa) =\begin{cases}
{\rm a)}\ (q^*,lq) & \text{if } \; \widehat{q}=q<\widetilde{q_B},\\[1mm]
{\rm b)}\ \big(\frac{Nq_B}{N-1},\frac{l\, Nq_B}{N-1+q_B}\big) & \text{if } \; \widehat{q}\ge \widetilde{q_B}.
\end{cases}
\end{align}
Later we will check that the following condition holds:
\begin{equation}\label{kap:eta} \tag{c.1}
2<\kappa <\eta .    
\end{equation} 
Under condition \eqref{kap:eta}, the interpolation inequality  states that
\begin{equation}\label{Interp:grad}
\|\nabla u\|_{L^{\kappa}(\Om)}\le \|\nabla u\|_{L^{2}(\Om)}^{\,\Theta}\,\|\na u\|_{L^{\eta}(\Om)}^{\,1-\Theta} \le \|\nabla u\|_{L^{2}(\Om)}^{\,\Theta}\,\|u\|_{W^{1,\eta}(\Om)}^{\,1-\Theta},
\end{equation}
where 
\begin{equation*}%\label{Def:theta}
\frac{1}{\kappa}=\frac{\Theta}{2}+\frac{1-\Theta}{\eta } \iff \Theta \;=\; \frac{\frac{1}{\kappa} - \frac{1}{\eta }}{\frac{1}{2} - \frac{1}{\eta}}\in (0,1).
\end{equation*}

Assuming also the following condition: 
\begin{equation}\label{l:te:1}\tag{c.2}
l(1-\Te)<1 \iff l\Big( \frac{1}{2}-\frac{1}{\kappa}\Big)< \frac{1}{2}- \frac{1}{\eta},
\end{equation}
we choose
\begin{equation}\label{al:al'}
\La=\frac{1}{\,l(1-\Theta)\,},\q{and } \La'=\frac{1}{\,1-l(1-\Theta)\,}>0.
\end{equation}
Using the Young's inequality, see \cite[Section 7.1, p.145]{Gilbarg_Trudinger}, into \eqref{Interp:grad} and rising to the power $l$, we obtain the following:  for all $\e>0$,
\begin{equation}\label{Des:Young:1}
\|\nabla u\|_{L^{\kappa}(\Om)}^{\,l} \le \frac{\varepsilon}{\La}\,\|u\|_{W^{1,\eta}(\Om)}
+ C(\varepsilon,\La)\,\|\nabla u\|_{L^{2}(\Om)}^{\,\gamma},
\end{equation}
where $ C(\varepsilon,\La)=\frac{\varepsilon^{-\La'/\La}}{\La'}$ and
\begin{equation}\label{gamma-def:0}
\gamma = l\Theta \La'>0, \qq{under condition} {\rm\eqref{l:te:1}}, 
\end{equation}
since $l>0$, $\Te\in(0,1)$,  and \eqref{al:al'}.\\
Now,  fixing $\e$  such that  $C\frac{\e}{\La}=\frac{1}{2}$ and substituting 
\eqref{Des:Young:1} into \eqref{estim:W1m:fxfB:i1} or into \eqref{estim:W1m:fxfB:1:tilde:ii2}, respectively, we deduce
\begin{equation*}%\label{estim:eta}
\|u\|_{W^{1,{\eta }}(\Om )}
\leq C\Big(\|\nabla u\|_{L^{2}(\Om)}^{\g} + \| \overline{g}\|_{L^{\min\{\widetilde{q_B},\widehat{q}\}}(\Om)} 
+ \| \overline{g}_B\|_{L^{q_B}(\p \Om)} \Big),
\end{equation*}
where $\eta,\ \gamma ,\  \widetilde{q_B}$ and $\widehat{q}$ are given by \eqref{def:eta:kap}, \eqref{Def:gamma},  \eqref{def:qB:tilde}, and \eqref{def:q:hat} respectively. Using the definition of $\eta$,   the definition of $m$, see \eqref{new:m}, and Claim 2, 
$\eta=m=q^*$  in case a) and $\eta=m=\frac{Nq_B}{N-1}$  in case b). We now observe that this estimate is precisely estimate \eqref{estim:W1m:unificado}.

To end the proof of this Step, we only have to verify that for the prior choices of $\kappa$ and $\eta$, conditions \eqref{kap:eta} and \eqref{l:te:1} hold.
Indeed, since 
\eqref{lq2},  it follows that $\kappa > 2.$ 
Next, for cases a) and b), we  prove that  $\eta>\kappa$ in the following claims 5 and 6.

\noindent a) {\it Claim 5: If $q<\widetilde{q_B},$ then $\eta= q^*>lq=\kappa$ for all $0<l<\tfrac{N+2}{N}$.}\\
\noindent Indeed, if $q<\widetilde{q_B},$ then $m=q^*$, and using  
\eqref{q:N2}-\eqref{q:N2:2} we  check that $q^*>lq$, so $\eta>\kappa$.

\noindent b) {\it Claim 6: If $q\ge \widetilde{q_B},$ then: $\eta=\frac{Nq_B}{N-1}>\frac{l\, Nq_B}{N-1+q_B}=\kappa$.}\\
\noindent We prove that $\frac{1}{N-1}>\frac{l}{N-1+q_B}$.
Equivalently, we check that $q_B>(l-1)(N-1)$.
Since $l<1+\tfrac{2}{N}$, $N>2$ and by hypothesis \(q_B>N-1\), we have 
$(l-1)(N-1)<\frac{2(N-1)}{N}<N-1<q_B$,   concluding that   $\eta>\kappa$.

In addition, we verify that \eqref{l:te:1} is fulfilled in  cases a), b) above.\\
a) We observe that $\eta=m=q^* $, $\kappa =lq,$ so 
\eqref{l:te:1} is reduced to
$
l\big(\tfrac{1}{2}-\tfrac{1}{lq}\big)<\tfrac{1}{2}-\tfrac{1}{q}+\tfrac{1}{N}\iff\tfrac{l-1}{2}<\tfrac{1}{N} \,.
$\\
\noindent b)  Now we have that $\eta=\frac{Nq_B}{N-1}$ and $\kappa=\frac{l\, Nq_B}{N-1+q_B}$. Therefore,
$
l\big( \tfrac{1}{2}- \tfrac{N-1+q_B}{lNq_B}\big)< \tfrac{1}{2} - \tfrac{N-1}{Nq_B}  \iff  \tfrac{l-1}{2}<\tfrac{1}{N}\,,
$
and so \eqref{l:te:1} holds in both cases, a) and b).
This ends the proof of Step 5 cases (i) and (ii), and consequently also ends the proof of Theorem \ref{th:W1m:unif}. 
\end{proof}

\begin{rem}
Using \eqref{gamma-def:0},  one definition  of $\g$ equivalent to \eqref{Def:gamma}, 
\begin{equation*}%\label{Des:gamma:pos}
\g \,\lesseqqgtr\, 1 \iff   l\Te \,\lesseqqgtr\, \frac{1}{\La'} \iff l\Te \,\lesseqqgtr\, 1-l(1-\Te) \iff l\,\lesseqqgtr\, 1.
\end{equation*}
\end{rem}

The next Lemma provides a bound on the $H^1-$norm of solutions
to \eqref{eq:W1m:fx:Unif}, in terms of the critical Lebesgue and trace norms.

\begin{lem}\label{lem:H1:bd}
Let $u\in H^{\, 1}(\Om )$ be a weak solution to \eqref{eq:W1m:fx:Unif}
where $g,\ \overline{g}_B$ satisfy \eqref{g},  for  $ q\ge(2^*)'$ and  $q_B\ge (2_*)'.$  %(see Definition \eqref{Conj:Sob:trac}).

Then,  there exists a constant $C=C(N, \Om , l, q, q_B)>0$, independent of $u,\overline{g},\overline{g}_B$, such that the following holds
\begin{align}\label{eq:H1:bd}
\|u\|_{H^{1}(\Omega)}^2 \le C\Big( \|u\|_{L^{2^*}(\Omega)}^{\frac{2 }{2-l}}&+ \|\overline{g}\|_{L^{(2^*)'}(\Omega)} \|u\|_{L^{2^*}(\Omega)}\nonumber\\
& + \|\overline{g}_B\|_{L^{(2_*)'}(\partial\Omega)}\|u\|_{L^{2_*}(\p\Omega)}\Big).
\end{align}
\end{lem}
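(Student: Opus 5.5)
Test the weak formulation of \eqref{eq:W1m:fx:Unif} with $\psi=u$ itself, which is admissible since $u\in H^1(\Om)$. This gives the energy identity
\begin{equation*}
\|u\|_{H^1(\Om)}^2=\int_\Om g(x,\na u)\,u\,dx+\int_{\p\Om}\overline{g}_B\,u\,dS .
\end{equation*}
I will bound the right-hand side term by term using \eqref{g}, namely $|g(x,\na u)|\le |\na u|^l+\overline{g}$.

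The linear terms are handled by Hölder. For the interior term,
\begin{equation*}
\int_\Om \overline{g}\,|u|\le \|\overline{g}\|_{L^{(2^*)'}(\Om)}\|u\|_{L^{2^*}(\Om)},
\end{equation*}
which is finite because $q\ge(2^*)'$ and $\Om$ is bounded. For the boundary term,
\begin{equation*}
\int_{\p\Om}|\overline{g}_B|\,|u|\le \|\overline{g}_B\|_{L^{(2_*)'}(\p\Om)}\|u\|_{L^{2_*}(\p\Om)},
\end{equation*}
which is finite since $q_B\ge (2_*)'$ and the trace of $u$ lies in $L^{2_*}(\p\Om)$. These two terms already appear in \eqref{eq:H1:bd} in exactly this form, so nothing more is needed for them.

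The main step is the gradient term $\int_\Om |\na u|^l|u|$. Apply Hölder with exponents $2/l$ and $(2/l)'=\frac{2}{2-l}$. Here $2/l>1$ requires $l<2$, which holds because $l<\frac{N+2}{N}<2$ when $N>2$. This gives
\begin{equation*}
\int_\Om|\na u|^l|u|\le \|\na u\|_{L^2(\Om)}^l\,\|u\|_{L^{2/(2-l)}(\Om)} .
\end{equation*}
Next I check that $\frac{2}{2-l}\le 2^*$, which is equivalent to $l\le \frac{N+2}{N}$ and so holds. Since $\Om$ is bounded, $\|u\|_{L^{2/(2-l)}(\Om)}\le |\Om|^{\frac{2-l}{2}-\frac{1}{2^*}}\|u\|_{L^{2^*}(\Om)}$. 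Now use Young's inequality with exponents $\frac{2}{l}$ and $\frac{2}{2-l}$, inserting a small parameter $\e$:
\begin{equation*}
\|\na u\|_{L^2}^l\,C\|u\|_{L^{2^*}}\le \e\|\na u\|_{L^2}^2+C_\e\|u\|_{L^{2^*}}^{\frac{2}{2-l}} .
\end{equation*}
Choosing $\e=\frac12$, the term $\frac12\|\na u\|_{L^2}^2\le\frac12\|u\|_{H^1(\Om)}^2$ is absorbed into the left-hand side. This yields \eqref{eq:H1:bd}, with a constant depending only on $N,|\Om|,l$; the dependence on $q,q_B$ is only nominal.

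The only delicate points are the two exponent checks: $l<2$, so that the Young exponents are admissible, and $\frac{2}{2-l}\le 2^*$, so that $\|u\|_{L^{2/(2-l)}}$ is controlled by $\|u\|_{L^{2^*}}$. Both follow from $l<\frac{N+2}{N}$ with $N>2$. Once they hold, the absorption of the $\|\na u\|_{L^2}^2$ term is routine.
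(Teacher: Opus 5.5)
Your proposal is correct and follows essentially the same route as the paper: test with $\psi=u$, apply H\"older with exponents $2/l$ and $\frac{2}{2-l}$ to the gradient term and with the conjugate Sobolev/trace exponents to the linear terms, then use $\frac{2}{2-l}<2^*$ and Young's inequality to absorb $\frac12\|\na u\|_{L^2(\Om)}^2$ into the left-hand side. Your explicit exponent checks ($l<2$ and $\frac{2}{2-l}\le 2^*$, both from $l<\frac{N+2}{N}$ with $N>2$) spell out what the paper uses implicitly.
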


\begin{proof}%[Proof of Lemma \ref{lem:H1:bd}]
Using the  definition of a weak solution, hypothesis (\ref{g}) for $q\ge(2^*)'$, $q_B\ge (2_*)'$,
and Hölder inequality
\begin{align*}
&\int_\Om |\na u|^2 + u^2  \le  \int_\Om |\na u|^l\,|u| 
 + \int_\Om \overline{g}(x)\,|u|+\int_{\p\Om} |\overline{g}_{B}(x)|\,|u|\\
& \qquad  \le  \left(\int_\Om|\na u|^2\right)^\frac{l}{2}\left(\int_\Om|u|^\frac{2}{2-l}\right)^\frac{2-l}{2}+\left(\int_\Om|\overline{g}(x)|^{(2^*)'}\right)^{\frac1{(2^*)'}}\left(\int_\Om|u|^{2^*}\right)^{\frac1{2^*}} \\
& \qquad\qquad+\left(\int_{\p\Om}|\overline{g}_{B}(x)|^{(2_*)'}\right)^{\frac1{(2_*)'}}\left(\int_{\p\Om}|u|^{2_*}\right)^{\frac1{2_*}},
\end{align*}
in other words
\begin{align}\label{H1:1}
\|u|_{H^1(\Om)}^2 
&\le \|\na u\|_{L^2(\Om)}^l \|u\|_{L^\frac{2}{2-l}(\Om)}
+\|\overline{g}\|_{L^{(2^*)'(\Om)}} \| u\|_{L^{2^*}(\Om)} \nonumber\\
&\qquad\qquad+\|\overline{g}_B\|_{L^{(2_*)'(\p\Om)}} \| u\|_{L^{2_*}(\p\Om)}.
\end{align}
Now, using Young's inequality, see \cite[Section 7.1, p.145]{Gilbarg_Trudinger}, and since $\frac{2}{2-l}<2^*,$ we deduce that there exists a constant $C=C(l,|\Om|)>0$ and independent of $u$, such that
\begin{align}\label{H1:2}
\|\na u\|_{L^2(\Om)}^l \|u\|_{L^\frac{2}{2-l}(\Om)}
\le \frac12 \|\na u\|_{L^2(\Om)}^2+ C\| u\|_{L^{2^*}(\Om)}^\frac{2}{2-l}.
\end{align}
Hence, substituting \eqref{H1:2} into \eqref{H1:1}, we obtain the estimate \eqref{eq:H1:bd}, ending the proof of Lemma \ref{lem:H1:bd}.
\end{proof}

The next Proposition establishes an \textit{a priori} bound on the $W^{1,m}-$norm of solutions
to \eqref{eq:W1m:fx:Unif}, expressed in terms of the Lebesgue norms of the solution and of the nonlinearities $\overline{g}$ and $\overline{g}_B$.

\begin{pro}
Assume the same hypothesis of the Theorem \eqref{th:W1m:unif}. \\
Then there exists a constant $C=C(N, \Om , l, q, q_B)>0$, independent of $u,\overline{g},\overline{g}_B$, such that the following holds
\begin{align*}%\label{estim:W1m}
&\|u\|_{W^{1,m}(\Omega)} 
\le C\Big( 1+ \|u\|_{L^{2^*}(\Omega)}^{\frac{\g }{2-l}}
+ \|\overline{g}\|_{L^{(2^*)'}(\Omega)}^\frac{\g}{2} \|u\|_{L^{2^*}(\Omega)}^\frac{\g}{2}\\
&\quad 
+ \|\overline{g}_B\|_{L^{(2_*)'}(\p\Om)}^\frac{\g}{2}
\|u\|_{L^{2_*}(\p\Omega)}^\frac{\g}{2}\nonumber 
+ \|\overline{g}\|_{L^{\min\{q,\widetilde{q_B}\}}(\Omega)} + \|\overline{g}_B\|_{L^{q_B}(\partial\Omega)}\Big),
\end{align*}
where $\g=\g(N,l, q, q_B)>0$  is given in \eqref{Def:gamma}.
\end{pro}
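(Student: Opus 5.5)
The estimate follows by inserting Lemma \ref{lem:H1:bd} into Theorem \ref{th:W1m:unif}. The hypotheses of Theorem \ref{th:W1m:unif} are $q>\tfrac N2$ and $q_B>N-1$. For $N>2$ we have $\tfrac N2>\tfrac{2N}{N+2}=(2^*)'$ and $N-1>\tfrac{2(N-1)}{N}=(2_*)'$. So the hypotheses of Lemma \ref{lem:H1:bd} also hold, after using $L^q(\Om)\hookrightarrow L^{(2^*)'}(\Om)$ and $L^{q_B}(\p\Om)\hookrightarrow L^{(2_*)'}(\p\Om)$ on the bounded domain. Both results then apply to the same weak solution $u$.

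First, Theorem \ref{th:W1m:unif} gives
\[
\|u\|_{W^{1,m}(\Om)}\le C\Big(\|\na u\|_{L^2(\Om)}^{\g}+\|\overline g\|_{L^{\min\{q,\widetilde{q_B}\}}(\Om)}+\|\overline g_B\|_{L^{q_B}(\p\Om)}\Big).
\]
Only the term $\|\na u\|_{L^2(\Om)}^{\g}$ needs further work. Since $\|\na u\|_{L^2(\Om)}\le\|u\|_{H^1(\Om)}$, it suffices to bound $\|u\|_{H^1(\Om)}^{\g}=\big(\|u\|_{H^1(\Om)}^2\big)^{\g/2}$.

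Second, raise \eqref{eq:H1:bd} to the power $\g/2>0$. Use the elementary inequality $(a+b+c)^{s}\le C_s(a^s+b^s+c^s)$ for $a,b,c\ge0$, $s>0$, with constant $C_s=\max\{1,3^{s-1}\}$. This yields
\[
\|u\|_{H^1(\Om)}^{\g}\le C\Big(\|u\|_{L^{2^*}(\Om)}^{\frac{\g}{2-l}}+\|\overline g\|_{L^{(2^*)'}(\Om)}^{\g/2}\|u\|_{L^{2^*}(\Om)}^{\g/2}+\|\overline g_B\|_{L^{(2_*)'}(\p\Om)}^{\g/2}\|u\|_{L^{2_*}(\p\Om)}^{\g/2}\Big).
\]
Here $\tfrac{2}{2-l}\cdot\tfrac{\g}{2}=\tfrac{\g}{2-l}$, which is well defined because $l<\tfrac{N+2}{N}<2$.

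Substituting this into the first estimate gives the claim. The additive constant $1$ is harmless and only accommodates the constants in the case distinctions; each constant depends only on $N,\Om,l,q,q_B$ through Theorem \ref{th:W1m:unif}, Lemma \ref{lem:H1:bd} and $C_s$. There is no real obstacle here. The only point needing care is the exponent compatibility just checked, so that Lemma \ref{lem:H1:bd} legitimately applies under the hypotheses of Theorem \ref{th:W1m:unif}.
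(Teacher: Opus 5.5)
Your proposal is correct and follows essentially the same route as the paper. The paper's one-line proof likewise checks that $q>\tfrac N2>(2^*)'$ and $q_B>N-1>(2_*)'$, so that Lemma \ref{lem:H1:bd} applies, and then substitutes \eqref{eq:H1:bd}, raised to the power $\gamma/2$, into \eqref{estim:W1m:unificado}. You spell out the elementary steps the paper leaves implicit: the bound $\|\nabla u\|_{L^2(\Omega)}\le\|u\|_{H^1(\Omega)}$, and the inequality $(a+b+c)^s\le C_s(a^s+b^s+c^s)$.
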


\begin{proof}
We only have to observe that $N/2>2N/(N+2)$,   that $N-1>2(N-1)/N$, and to introduce estimate \eqref{eq:H1:bd} into estimate \eqref{estim:W1m:unificado}.
\end{proof}

\section{Proof of Theorem \ref{th:RegEst} and of Corollary \ref{cor1}}
\label{sec:expl:estim}

\begin{proof}[Proof of Theorem \ref{th:RegEst}]
Let $ u \subset H^{1}(\Om )$ be a sequence of weak solutions to \eqref{pde}. 
Using Theorem \ref{th:reg:nol}, $u \subset H^{1}(\Om )\cap L^{\infty } (\Om ),$
hence $\widehat{f}(\cdot,u(\cdot))\in L^{r} (\Om) ,$ where 
$\widehat{f}$ is defined in \eqref{f:hat}.
We establish the estimate \eqref{estim} in several steps below.
\medskip

\noindent{(i) \it  $W^{1,m}(\Om)$ estimates of $u$, for $m>N$.}\\
First,  using elliptic regularity theory, specifically Theorem \ref{th:W1m:unif}, for
\begin{equation}\label{q:qB}
	q\in\left(\tfrac{N}{2},\min\left\{r,\widetilde{r_B}\right\}\right), \quad 
	q_B :=\tfrac{(N-1)q^*}{N}\in (N-1,r_B),
\end{equation}
where 
\begin{equation}\label{def:rB:tilde}
\widetilde{r_B}:=\tfrac{Nr_B}{N-1+r_B}<N,
\end{equation}
we deduce that $u\in W^{1,m}(\Omega)$ where  $m=m(q,q_B)$ is defined by  \eqref{new:m}.
Observe that with those definitions of $q$ and $ q_B$,  
\begin{equation}\label{def:m:q:2}
	q = \widetilde{q_B}\qq{and} m=q^* =\tfrac{Nq_B}{N-1}>N.
\end{equation}
Using \eqref{q:N2}-\eqref{q:N2:2}, $m/l>q$ and by definition of $\widehat{q}$ and $\widetilde{q_B}$, see \eqref{def:q:hat} and \eqref{def:qB:tilde} respectively,
\begin{equation*}%\label{def:m:q:4}
	\widehat{q}=q=\widetilde{q_B}.
\end{equation*}
Moreover, we have the following equivalences
\begin{equation}\label{equiv:q:qB}
	q_B:=\tfrac{(N-1)q^*}{N}\iff \tfrac{2_*}{q_B}=\tfrac{2^*}{q^*}\iff 2_{*,N/q_B} =  2^*_{N/q}.
\end{equation}

Additionally,  using \eqref{estim:W1m:unificado},  there exists a constant $C=C(N, \Om , l, q)>0$, independent of $u,\widehat{f},f_B$, such that the following holds
\begin{align}\label{estim:W1m:unificado:q}
\|u\|_{W^{1,m}(\Omega)} \le & C\Big( \|\nabla u\|_{L^2(\Omega)}^{\gamma}+ \|\widehat{f}(\cdot,u(\cdot))\|_{L^{q}(\Omega)} 
\nonumber\\
&\qquad\qquad  
+ \|f_B(\cdot,u(\cdot))\|_{L^{q_B}(\partial\Omega)}\Big),
\end{align}
and adapting 
\eqref{Def:gamma} for this particular case,
\begin{equation}\label{Def:gamma:2}
\gamma :=  
\begin{cases}
{\rm (a)}\ \, l,\ \text{if } lq\le 2,\\[3mm]
{\rm (b)}\ \dfrac{2\,[\frac{N}{q}(1-l)+l]}{N(1-l)+2}
,&\text{if }  lq> 2.
\end{cases}
\end{equation}
Observe that whenever $lq> 2,$ then
$\frac{2\,[\frac{N}{q}(1-l)+l]}{N(1-l)+2}\lesseqqgtr l\iff l\lesseqqgtr1.$\\

\noindent{(ii) \it   Gagliardo-Nirenberg interpolation inequality.}\\
We now use the Gagliardo--Nirenberg interpolation inequality
\begin{equation}\label{GN}
\|u\|_{L^{\infty}(\Omega)}
\le C\,\|u\|_{W^{1,m}(\Omega)}^{\sigma}\,\|u\|_{L^{2^*}(\Omega)}^{1-\sigma},
\end{equation}
where $\sigma\in(0,1]$ is given by 
\begin{equation*}
\tfrac{1-\sigma}{\sigma}=2^*\big(\tfrac{1}{N}-\tfrac{1}{m}\big),
\qquad
\text{that is, }
\qquad
\sigma:=\tfrac{1}{1+2^*\big(\tfrac{1}{N}-\tfrac{1}{m}\big)}.
\end{equation*}
Since \eqref{def:m:q:2}, $m=q^*$ and we have 
\begin{equation}\label{Def:sigma:2}
\tfrac{1}{\s}
=1+2^*\big(\tfrac{2}{N}-\tfrac{1}{q}\big)
= 2^*_{N/q}-1.
\end{equation}

We introduce the  $W^{1,m}(\Om)$--estimate \eqref{estim:W1m:unificado:q}, into the Gagliardo--Nirenberg interpolation inequality \eqref{GN} to obtain
\begin{equation}\label{step1}
\|u\|_{L^{\infty}(\Om)}
\le C\Big( 
\|\nabla u\|_{L^2(\Om)}^{\gamma}
+\|\widehat{f}(\cdot,u)\|_{L^{q}(\Om)}
+\|f_B(\cdot,u)\|_{L^{q_B}(\p\Om)}
\Big)^{\sigma}
\|u\|_{L^{2^*}(\Om)}^{1-\sigma},
\end{equation}
where $q,\ q_B$ are defined in \eqref{q:qB}
and
$\g=\g(N,l, q)$ is defined by \eqref{Def:gamma:2}.
\bigskip

\noindent{(iii) \it  Estimates in terms of the Lebesgue norms.}\\
Using that $\widetilde{f}$ and $\widetilde{f}_B$ are non-decreasing, we define 
\begin{eqnarray}\label{def:M:MB}
M&:=& \widetilde{f}(\|u\| _{L^{\infty }(\Om )})=\max _{\left[ 0, \| u\| _{L^{\infty }(\Om )}\right] } \widetilde{f},\\ 
M_{B}&:=& \widetilde{f_{B}}(\|u\|_{ L^{\infty }(\Om )})=\max _{\left[0, \| u\| _{L^{ \infty }(\Om )}\right] }\widetilde{f_{B}},\nonumber 
\end{eqnarray}
\color{black}
Throughout the proof, we use  that for any $\alpha >0$, there exist two constants $C_i=C_i(\al)>0$, $i=1,2$, independent of $x$, such that
\begin{equation*}% \label{Des4}
C_1(1+x^{\alpha })\le (1+x)^{\alpha }\le C_2(1+x^{\alpha }), \qquad \text{for all} \quad x\geq 0.
\end{equation*}

Using the same ideas as in the proof of \cite[Theorem 2.2]{Antonio_Arciga_Pardo_Sanchez},  by definition of $\widehat{f}$ (see  \eqref{f:hat}),  of $M$  (see \eqref{def:M:MB}), and using the H\"{o}lder inequality, with $s=r/q\in (1,\infty)$,  for any $t<q$, so that   $ts'=\frac{2^*}{2^*_{N/r}-1}$, we have that
\begin{align}\label{f:tilde:L2*}
&\int_{\Om} |\widetilde{f}(|u|)|^{\frac{2^*}{2^*_{N/r}-1}}dx \leq 
C\left(1 +\| u\|^{2^*}_{L^{2^*}(\Om )} \right),
\end{align}
and consequently,
\begin{equation}\label{estim:g:tilde}
\|\widehat{f}(\cdot,u)\|_{L^{q}(\Om)}
\le C \|a_2\|_{L^r(\Om)}\, M^{1-t/q}\,\left(1 +\| u\|^{2^*\left( \frac{1}{q}-\frac{1}{r}\right)}_{H^{1}(\Om )} \right) \,  ,
\end{equation}
where 
\begin{align}\label{Def:t}
t:=\frac{2^{*}}{2^{*}_{N/r}-1}\left( 1-\frac{q}{r}\right) <q \iff q >\frac{2N}{N+2} \ \checkmark\,.
\end{align}
On the other hand, in an analogous way, using H\"older inequality  with $s_B:=r_B/q_B$, and $t_B<q_B$ such that $t_Bs_B^{'}=\frac{2_{*}}{2_{*,N/r_B}-1}$, we obtain
\begin{align}\label{fB:tilde:L2*}
&\int_{\p \Om } |\widetilde{f}_{B}(|u|)| ^{\frac{2_*}{2_{*,N/r_B}-1}}dS  \le C\left(1 +\| u\|^{2_*}_{L^{2_*}(\p\Om )} \right),
\end{align}
consequently,
\begin{align}\label{fB:bound}
&\| f_B(\cdot,u)\|_{L^{q_B}(\p\Om )} \leq C \|a_B\|_{L^{r_B}(\p \Om )} 
M_{B}^{1-\frac{t_B}{q_B}}
\left( 1+\| u\|^{2_*\left( \frac{1}{q_B}-\frac{1}{r_B}\right) }_{H^{1}(\Om )} \right)  
\end{align}
where
\begin{align}\label{Def:tB}
t_B :=\frac{2_{*}}{2_{*,N/r_B}-1}\left( 1-\frac{q_B}{r_B}\right) <q_B \iff q_B >\frac{2(N-1)}{N}\,\checkmark\,.
\end{align}
From  \eqref{step1}, using  first 
\eqref{f:tilde:L2*}, \eqref{fB:tilde:L2*},
and next \eqref{estim:g:tilde}, \eqref{fB:bound}  we get 
\begin{align}\label{GN1a}
\| u\|_{L^\infty (\Om ) }
& \leq C\left[  \| \na u\|_{L^{2}(\Om)}^{\gamma} +  \| a\|_{L^{r}(\Om )} \,M^{1-\frac{t}{q}} \left( 1 +\| u\|_{L^{2^*}(\Om )} ^{2^*\left( \frac{1}{q}-\frac{1}{r}\right)}\right)\right.  \nonumber\\
&\quad\left. +  \| a_B\|_{L^{r_B}(\p \Om )}\, M_{B}^{1-\frac{t_B}{q_B}}
\left( 1+\| u\|^{2_*\left( \frac{1}{q_B}-\frac{1}{r_B}\right) }_{L^{2_*}(\p\Om )} \right) \right] ^{\sigma } \| u\|_{L^{2^*}(\Om )}^{(1-\sigma) } \\
\label{GN1b}
&\leq  C\left[ \|u\|_{H^{1}(\Om)}^{\gamma \s} +    \| a\|^{\sigma }_{L^{r}(\Om )}\,
M^{\left( 1-\frac{t}{q}\right)\sigma }\left( 1 +\| u\|_{H^{1}(\Om )} ^{2^*\left( \frac{1}{q}-\frac{1}{r}\right)\sigma }\right) \right. \nonumber \\
&\quad\left.  +   \| a_B\|^{\sigma }_{L^{r_B}(\p \Om )}\,M_{B}^{\left( 1-\frac{t_B}{q_B}\right) \sigma }  \left( 1+\| u\|^{2_*\left( \frac{1}{q_B}-\frac{1}{r_B}\right)\sigma }_{H^{1}(\Om )} \right) \right] \| u\|_{H^{1}(\Om )}^{(1-\sigma) }.
\end{align}
Taking into account the definitions of $M$ and $M_B$, see \eqref{def:M:MB}, that $\widetilde{f}$ and $\widetilde{f}_B$ are non-decreasing, and the definitions of the functions $\widetilde{h}$ and $\widetilde{h}_B$, see \eqref{def:h:hB}, we can write the following 
\begin{equation}\label{M:h,MB:hB}
M=\frac{\| u\|_{L^\infty (\Om ) }^{2^*_{N/r}-1}}{\widetilde{h}\big(\| u\|_{L^\infty (\Om ) }\big)} \qquad \text{and} \qquad M_{B}=\frac{\| u\|_{L^\infty (\Om ) }^{2_{*,N/r_B}-1}}{\widetilde{h}_{B}\big(\| u\|_{L^\infty (\Om ) }\big)}.
\end{equation}
Using the definitions of $s$,  and  $2^*_{N/p}$ (see  \eqref{Def:t} and \eqref{Def:Np:NpB} resp.), we  get  
\begin{equation*} 
1-\frac{t}{q}
=1-\frac{2^*}{2_{N/r}^*-1}\left( \frac{1}{q}-\frac{1}{r}\right) 
= \frac{2^*_{N/q}-1}{2^*_{N/r}-1}
\end{equation*}
and since the definition of $\s$,  (see \eqref{Def:sigma:2})
\begin{equation}\label{TRM3}
\left( 2^*_{N/r}-1\right) \big( 1-\tfrac{t}{q}\big)\sigma =1.
\end{equation}
Similarly, from the definitions of $s_B$,  and  $2_{*,N/q_B}$ (see
\eqref{Def:tB} and \eqref{Def:Np:NpB} resp.), we obtain
\begin{equation*} 
1-\frac{t_B}{q_B}=1-\frac{2_*}{2_{*,N/r_B}-1}\left( \frac{1}{q_B}-\frac{1}{r_B}\right)= \frac{2_{*,N/q_B}-1}{2_{*,N/r_B}-1}.   
\end{equation*}
Likewise, since the definition of $\s$, see \eqref{Def:sigma:2}, and the equivalences  \eqref{equiv:q:qB},  

\begin{equation}\label{TRM4}
\left( 2_{*,N/r_B}-1\right) \left( 1-\frac{t_B}{q_B}\right)\sigma =\frac{2_{*,N/q_B}-1}{2_{N/q}^*-1}=1.
\end{equation}
Now,   dividing both sides of the inequality \eqref{GN1b} by $\|u\|_{L^\infty (\Om ) }$, using the definitions of  $M$ and $M_B$,  \eqref{TRM3}, \eqref{TRM4}, and   $a_M$ defined in \eqref{Def:aM},

\begin{align*}
1\leq  \frac{ \|u\|_{H^{1}(\Om )}^{\gamma \s + (1-\sigma) }}{\| u\|_{L^\infty (\Om ) }} &+ Ca_{M}^{\sigma } \left[ \frac{ \left( 1+\|u\|_{H^{1}(\Omega  )}^{2^*\left( \frac{1}{q}-\frac{1}{r} \right) \sigma }\right) }{\widetilde{h}^{ \frac{1}{2^*_{N/r}-1}  }\big(\| u\|_{L^\infty (\Om ) }\big)}\right.\nonumber\\
&\qquad\qquad +    \left. \frac{\left( 1+\| u\|_{H^{1}(\Omega )}^{2_*\left( \frac{1}{q_B}-\frac{1}{r_B}\right)\sigma }\right)}{\widetilde{h}_{B}^{\,\frac{1}{ 2_{*,N/r_B}-1}}\big(\| u\|_{L^\infty (\Om ) }\big)}       \right]\| u\|_{H^{1}(\Om )}^{(1-\sigma) } .
\end{align*}
Now, the definition of $h$ (see \eqref{def:h}), implies that
\begin{equation*}
\frac{1}{h\big(\| u\|_{L^\infty (\Om ) }\big)^{\frac{1}{2^*_{N/r}-1} }}
=\max \left\{ \frac{1}{\widetilde{h}\big(\| u\|_{L^\infty (\Om ) }\big)^{\frac{1}{2^*_{N/r}-1} }}, 
\frac{1}{\widetilde{h}_{B}^{\,\frac{ 1 }{ 2_{*,N/r_B}-1}}\big(\| u\|_{L^\infty (\Om ) }\big)} 
\right\} .
\end{equation*}
And substituting this maximum 
we get
\begin{align*}
& 
h^{\frac{1}{2^*_{N/r}-1} }\big(\| u\|_{L^\infty (\Om ) }\big)
\leq \frac{h^{\frac{1}{2^*_{N/r}-1} }\big(\| u\|_{L^\infty (\Om ) }\big)
}{\| u\|_{L^\infty (\Om ) }}\,\|u\|_{H^{1}(\Om )}^{\gamma \s + (1-\sigma) } \\ 
& \qquad + Ca_{M}^{\sigma } \left(  1+\| u\|_{H^{1}(\Om )}^{2^*\left( \frac{1}{q}-\frac{1}{r} \right) \sigma } \right.  \left. +\| u\|_{H^{1}(\Om )}^{2_*\left( \frac{1}{q_B}-\frac{1}{r_B}\right)\sigma }\right) \| u\|_{H^{1}(\Om )}^{(1-\sigma) }. \nonumber 
\end{align*}
At this moment, thanks to  hypothesis \ref{ffB}, we observe that either 
\begin{equation*}%\label{h:s:bd}
\frac{\widetilde{h}(s)^{\, \frac{1}{2^*_{N/r}-1}}}{s} %=\frac{s}{s\,\widetilde{f}(s)^{\,\frac{1}{2^*_{N/r}-1}}}
=\frac{1}{\widetilde{f}(s)^\frac{1}{2^*_{N/r}-1}}
\to 0\q{as} s\to\infty, \qquad\text{or}   
\end{equation*}
\begin{equation*}
\frac{\widetilde{h}_B(s)^{\, \frac{1}{2^*_{N/r_B}-1}}}{s} =\frac{1}{\widetilde{f}_B(s)^\frac{1}{2^*_{N/r_B}-1}}
\to 0\q{as} s\to\infty,   
\end{equation*}
and so $h(s)^{\, \frac{1}{2^*_{N/r}-1}}/s\to 0$ as $\to\infty.$  Consequently, we can write

\begin{align} \label{Des:h}
h^{\frac{1}{2^*_{N/r}-1} }\big(\| u\|_{L^\infty (\Om ) }\big)
&\leq C \Bigg[ 1+ \|u\|_{H^{1}(\Om )}^{\gamma \s}+\| u\|_{H^{1}(\Om )}^{2^*\left( \frac{1}{q}-\frac{1}{r} \right) \sigma }  \\
& \qquad \qquad   +\| u\|_{H^{1}(\Om )}^{2_*\left( \frac{1}{q_B}-\frac{1}{r_B}\right)\sigma }\Bigg] \| u\|_{H^{1}(\Om )}^{(1-\sigma) } \nonumber 
\end{align}
for $C=C(N, r, r_B,|\Om |,|\p \Om |, a_M )>0$, a constant independent of $u$.
\medskip

Now we look for the largest exponent of the three terms. 
Let $E_M$ denote the following related maximum 
\begin{equation}\label{def:EM}
E_M:=\max \left\{\gamma , 2^*\left( \frac{1}{q}-\frac{1}{r} \right) 
, \frac{2^*}{q}-\frac{2^*}{N}-\frac{2_*}{r_B}\right\},
\end{equation}
where we have used \eqref{equiv:q:qB}. Raising both sides of \eqref{Des:h} to the power $(2^*_{N/r}-1)$, and using \eqref{def:EM},
\begin{equation*}%\label{Des:h:2}
h \bigl(\|u\|_{L^\infty(\Omega)}\bigr) \le C\Big(1+\|u\|_{H^1(\Omega)}^{\beta}  \Big),
\end{equation*}
where, 
\begin{equation*}%\label{Def:Beta:A}
\beta
:= \Big(E_M+\frac{1-\sigma}{\s}\Big) \overline{\te},\q{with}      
\overline{\te} := (2^*_{N/r}-1)\s .
\end{equation*}
Since \eqref{Def:sigma:2}, and introducing the definition of  $E_M$ (see \eqref{def:EM})
\begin{equation}\label{Def:Beta:C} 
\be:=\max \left\{
\gamma+2^*_{N/q}-2 , 2^*_{N/r}-2
, \tfrac{2^*}{N}-\tfrac{2_*}{r_B}\right\}\times \overline{\te},\ \overline{\te}  = \frac{2^*_{N/r}-1}{2^*_{N/q}-1},
\end{equation}
where $\g=\g(N,l, q)$ is given by \eqref{Def:gamma:2}.

Finally, $\be_0$ is the infimum of the exponent $\be(q)$
$$
\be_0:=\inf \be(q), \qq{for}q\in\left(\tfrac{N}{2},\min\left\{r,\widetilde{r_B}\right\}\right).
$$
Observe that $\be_0=\be_0(N,l,r,r_B)$ and it is independent of $u$, ending the proof of Theorem \ref{th:RegEst}.
\end{proof}

\begin{proof}[Proof of Corollary \ref{cor1}]
Using  \eqref{GN1a}, 
reasoning as in \eqref{M:h,MB:hB}-\eqref{Des:h}, and raising both sides of \eqref{Des:h:2*} to the power $(2^*_{N/r}-1)$, we deduce
\begin{align} \label{Des:h:2*}
h\big(\| u\|_{L^\infty (\Om ) }\big)
&\leq C \Bigg[ 1+ \| \na u\|_{L^{2}(\Om)}^{\gamma}+\| u\|_{L^{2^*}(\Om )}^{2^*\left( \frac{1}{q}-\frac{1}{r} \right)}  \nonumber\\
& \qquad \qquad   +\| u\|_{L^{2_*}(\p\Om )}^{2_*\left( \frac{1}{q_B}-\frac{1}{r_B}\right) }\Bigg]^{\s(2^*_{N/r}-1)} \| u\|_{L^{2^*}(\Om )}^{(1-\sigma)(2^*_{N/r}-1) }  
\end{align}
for $C=C(N, r, r_B,|\Om |,|\p \Om |, a_M )>0$, a constant independent of $u$.

Now, using \eqref{eq:H1:bd}, \eqref{f:tilde:L2*}, \eqref{fB:tilde:L2*} ,
and regrouping terms, we deduce 
\begin{align}
\|u\|_{H^{1}(\Omega)}^2 \label{eq:H1:bd:pde:2}
&\le Ca_M\Big(1 + \|u\|_{L^{2^*}(\Omega)}^{\frac{2 }{2-l}}+ \| u\|^{2^*_{N/r}}_{L^{2^*}(\Om )}  
+\| u\|^{2_{*,N/ r_{B}}}_{L^{2_*}(\p\Om )} \Big).
\end{align}

Introducing now \eqref{eq:H1:bd:pde:2} into \eqref{Des:h:2*}, we can write
\begin{align*}%\label{Des:h:3}
&h\big(\|u\|_{L^\infty(\Omega)}\big)\le
C\Bigg[1 +\Big(1+\|u\|_{L^{2^*}(\Omega)}^{\frac{2}{2-l}}
+\|u\|_{L^{2^*}(\Omega)}^{2^*_{N/r}}+\|u\|_{L^{2_*}(\partial\Omega)}^{\,2_{*,N/r_B}}\Big)^{\frac{\gamma}{2}} \nonumber\\
&\qquad\qquad+\|u\|_{L^{2^*}(\Omega)}^{2^*\left(\frac1q-\frac1r\right)}
+\|u\|_{L^{2_*}(\partial\Omega)}^{2_*\left(\frac1{q_B}-\frac1{r_B}\right)}\Bigg]^{\sigma(2^*_{N/r}-1)}\times\|u\|_{L^{2^*}(\Omega)}^{(1-\sigma)(2^*_{N/r}-1)}.
\end{align*}
Defining
\begin{align*}%\label{Def:Beta:123:q} 
\be_1(q)&:=\max \left\{ \frac{2\g \s (2^*_{N/r}-1)}{(2-l)}, 2^*_{N/r}\tfrac{\g (2^*_{N/r}-1)}{2}, 2^*\left(\tfrac1{q}-\tfrac1{r}\right)\s (2^*_{N/r}-1)\right\},\\
\be_2(q)&:=\max \left\{ 2^*_{N/r_B}\tfrac{\g}{2} \s (2^*_{N/r}-1), 2_*\left(\tfrac1{q_B}-\tfrac1{r_B}\right)\s (2^*_{N/r}-1)\right\},\\
\be_3(q)&:= (1-\sigma)(2^*_{N/r}-1),
\end{align*}
and
\begin{align*}%\label{Def:Beta:123} 
\be_i:=\inf_{q\in(\tfrac{N}{2},\min\left\{r,\widetilde{r_B} \right\})}\ \be_i(q) ,\qq{for}i=1,2,3,
\end{align*}
where $\be_i=\be_i(N,l, r, r_B)>0$,  are independent of $u$,
we end the proof of Corollary \ref{cor1}.
\end{proof}

\section{Explicit definition of \texorpdfstring{$\beta_0$}{}}
\label{sec:expl}
In this section we explicitly calculate  the value of the exponent $\be_0$ in  Theorem \ref{th:RegEst}. The calculations are lengthy, but we include them here by the sake of completeness.

\begin{cor}
\label{coro}
Assume that all the hypotheses of Theorem \ref{th:RegEst} hold. Then,  the exponent $\be_0$ in  \eqref{estim} take the values  of the tables \ref{Table1}.
\end{cor}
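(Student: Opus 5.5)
The plan is to treat $\be_0=\inf_q \be(q)$ as a one–variable minimization problem and compute it piece by piece. I will use the variable $x:=1/q$, which ranges over the open interval
\[
I:=\Big(\max\big\{\tfrac1r,\tfrac1{\widetilde{r_B}}\big\},\ \tfrac2N\Big),
\]
with $\widetilde{r_B}$ as in \eqref{def:rB:tilde}. By \eqref{Def:Np:NpB} we have $2^*_{N/q}-1=2^*(1-x)-1=:D(x)$. This is a positive, strictly decreasing affine function on $I$, since $x<2/N$ gives $D(x)>2^*(1-\frac2N)-1=1$. Hence in \eqref{Def:Beta:C} the factor
\[
\overline{\te}(x)=\frac{2^*_{N/r}-1}{D(x)}
\]
is strictly increasing in $x$. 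Inside the maximum, the terms $B:=2^*_{N/r}-2$ and $\widetilde D:=\frac{2^*}{N}-\frac{2_*}{r_B}$ do not depend on $x$. Consequently $B\,\overline{\te}(x)$ and $\widetilde D\,\overline{\te}(x)$ (when positive) are increasing and are smallest as $x\downarrow\inf I$, that is, as $q\uparrow\min\{r,\widetilde{r_B}\}$. The remaining term $A(x):=\gamma(x)+D(x)-1$ is where the gradient exponent $l$ enters, through \eqref{Def:gamma:2}.

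Next I split $I$ at the threshold $lq=2$, i.e. $x=l/2$, and analyze $A\,\overline{\te}$ on each piece.
\begin{itemize}
\item In regime (a), where $x\ge l/2$ and $\gamma=l$, one has
\[
A\,\overline{\te}=(2^*_{N/r}-1)\Big(1+\frac{l-1}{D(x)}\Big).
\]
This is increasing in $x$ if $l<1$, decreasing if $l>1$, and constant if $l=1$.
\item In regime (b), where $x<l/2$, $\gamma$ is affine in $x$:
\[
\gamma=\frac{2[Nx(1-l)+l]}{N(1-l)+2}.
\]
Then $A\,\overline{\te}$ is a linear-fractional function of $x$, hence monotone. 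Its direction is given by the sign of a $2\times2$ determinant, which I will compute explicitly; I expect it to again be governed by the sign of $l-1$, consistent with the preceding Remark comparing $\gamma$ with $l$.
\end{itemize}
The two pieces glue continuously at $x=l/2$, since both formulas give $\gamma=l$ there. Thus $x\mapsto\be(x)$ is a maximum of finitely many continuous, piecewise monotone linear-fractional functions. Its infimum over $I$ is therefore attained at one of three kinds of points: an endpoint of $I$ (possibly only as a limit, since $I$ is open, which is why the statement carries the $\e$), the kink $x=l/2$ if it lies in $I$, or a crossing point where $A\,\overline{\te}$ meets $\max\{B,\widetilde D\}\,\overline{\te}$. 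Since $\overline{\te}>0$, a crossing point is a solution of $A(x)=\max\{B,\widetilde D\}$, which is a linear equation in $x$ on each piece.

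The case distinctions are then organized as follows.
\begin{itemize}
\item Whether $l<1$, $l=1$ or $l>1$.
\item Where $l/2$ sits relative to $I$, by comparing $l/2$ with $2/N$ and with $\max\{1/r,1/\widetilde{r_B}\}$.
\item Which constant dominates, $B$ or $\widetilde D$, which is a comparison between $r$ and $r_B$.
\item Whether $r\le\widetilde{r_B}$ or not, which determines the left endpoint of $I$.
\end{itemize}
In each case I write $\be_0$ as the value of $\max\{A,B,\widetilde D\}\,\overline{\te}$ at the relevant point. When $l<1$, every term is increasing, so $\be_0$ is the limit at $q\uparrow\min\{r,\widetilde{r_B}\}$. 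When $l>1$, $A\,\overline{\te}$ decreases while the constant terms increase, so $\be_0$ is the crossing value if the crossing lies in $I$, and otherwise the value at the appropriate endpoint. These are exactly the entries to be collected in Table \ref{Table1}.

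The main obstacle I anticipate is the case $l>1$ in regime (b). There the monotonicity of $A\,\overline{\te}$ requires the actual determinant computation, and locating the crossing with the constant terms relative to both $l/2$ and the endpoints of $I$ multiplies the subcases. I will first verify the sign of the determinant symbolically in terms of $N$ and $l$, and only afterwards solve the crossing equations, discarding those solutions that fall outside $I$.
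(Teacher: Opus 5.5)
\textbf{Reversed monotonicity in regime (a).} Your framework is sound and uses the same ingredients as the paper: write $\be$ as a maximum of monotone pieces in $x=1/q$, split at $lq=2$, and locate the infimum at an endpoint, the kink, or a crossing with $\max\{B,\widetilde D\}$. The paper splits into the sub-intervals $I_{i,\al,\be}$ and uses $\overline{\te}\,'(q)<0$, see \eqref{theta-derivative}. However, the monotonicity claim that drives your decision rule is backwards. Since $D(x)=2^*(1-x)-1$ is positive and decreasing in $x$, $1/D$ is increasing. Hence in regime (a)
\[
A\,\overline{\te}=(2^*_{N/r}-1)\Big(1+\frac{l-1}{D(x)}\Big)
\]
is \emph{increasing} in $x$ when $l>1$ and \emph{decreasing} when $l<1$. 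In terms of $q$ it is decreasing for $l>1$ and increasing for $l<1$, which is exactly what the paper uses in case (1.a.a)$_0$ via \eqref{be:1aa}.

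So your two conclusions are swapped:
\begin{itemize}
\item For $l\ge 1$, all three terms are nondecreasing in $x$. No crossing analysis is needed, and the infimum is the limit as $q\uparrow\min\{r,\widetilde{r_B}\}$. An example is the entry $(l-1)\frac{2^*_{N/r}-1}{2_{*,N/r_B}-1}+2^*_{N/r}-1$ of $(1,a,a)$, which is the value of $A\,\overline{\te}$ at $q=\widetilde{r_B}$.
\item For $l<1$, the gradient term and the constant terms move in opposite directions, so the infimum sits at the crossing $A=\max\{B,\widetilde D\}$. For example, when $r\ge\widetilde{r_B}$ it is at $q_c=1/(\frac{l}{2^*}+\frac1r)$, clipped at $q=\frac N2$. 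This is the first row of $(1,a,a)$ in Table~\ref{Table1}.
\end{itemize}
Your rule would instead return, for $l<1$, the endpoint value $\lim_{q\uparrow\widetilde{r_B}}\be(q)$. That value is strictly larger than the true infimum whenever $q_c\in(\frac N2,\min\{\widetilde{r_B},\frac2l\})$, because $\be=A\,\overline{\te}$ is strictly increasing in $q$ on $(q_c,\min\{\widetilde{r_B},\frac2l\})$. So the entries you would collect do not match the table and are not the infimum.

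\textbf{Regime (b) is constant, not the main obstacle.} Your expectation there is also wrong, and it affects how you organize the cases: the $2\times 2$ determinant vanishes identically. Indeed
\[
\g-1=\frac{(1-l)(2Nx-N-2)}{N(1-l)+2},\qquad D(x)=\frac{N+2-2Nx}{N-2},
\]
so $A/D=1+(\g-1)/D=\frac{2(2-l)}{N(1-l)+2}$. Thus $A\,\overline{\te}$ is constant in $q$ on $\{lq>2\}$. This is the paper's identity \eqref{ga:1b:2}, and it is the source of the $q$-independent entries $(1,b,a)$ and $(2,b,a)$. Regime (b) with $l>1$ is therefore the easiest case. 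The correct global picture is that the gradient term is nondecreasing in $q$ for $l\le 1$ and nonincreasing for $l\ge1$ on $\{lq\le 2\}$, and flat on $\{lq>2\}$.

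\textbf{Two smaller points.}
\begin{itemize}
\item Your third and fourth case distinctions coincide, by \eqref{max}.
\item Table~\ref{Table1} records the infimum of $\be$ over each sub-interval $I_{i,\al,\be}$ separately. A global minimization yields the minimum of the applicable entries, not the entries themselves. To reproduce the table as stated, you must restrict to each $I_{i,\al,\be}$ as the paper does.
\end{itemize}
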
  
\begin{proof}
The proof consists of two parts.
Part I  analyzes the maximum of the elements  defining $E_M$, see \eqref{def:EM}, in terms of sub-intervals of $l$, and consequently defines  explicitly $\be(q)$ for the different sub-intervals.
Part II explicitly defines $\be_0$, the infimum of $\be(q)$ in the different subintervals,  analyzing carefully the behavior of $\be(q)$.

\bigskip

{\bf Part I.} {\it Explicit definition   of $\be(q)$, 
on each sub-interval.}

Note that
\begin{equation}\label{max}
2^*_{N/r}-2
> \tfrac{2^*}{N}-\tfrac{2_*}{r_B}\iff \tfrac{1}{N}-\tfrac{1}{r}> -\tfrac{N-1}{Nr_B}
\iff r>\widetilde{r_B}.
\end{equation}

The definition of $\be=\be(q)$, for $q\in\left(\tfrac{N}{2},\min\left\{r,\widetilde{r_B}\right\}\right)$, is given as follows (see \eqref{Def:Beta:C}):
\begin{enumerate}[label=\textbf{(\arabic*)}$_q$, leftmargin=.2cm]
\setlength{\parindent}{0em}
\item
Assume on the one hand 
\begin{equation}\label{r:<:rB}
r\ge \widetilde{r_B}\iff l_1:=\tfrac{2_*}{r_B}+2^*\big(\tfrac{1}{N}-\tfrac{1}{r}\big)\ge 0,    
\end{equation} 
see \eqref{def:rB:tilde}. Then $q\in\left(\tfrac{N}{2},\widetilde{r_B}\right) $,
and $\left(\tfrac{N}{2},\widetilde{r_B}\right)\ne\emptyset $ for any $r_B>N-1.$
\begin{enumerate}[label=\textbf{(1.\alph*)}$_q$, leftmargin=.2cm]
\setlength{\parindent}{0em}
\item
\label{1a:q}
If now we assume also 
\begin{equation}\label{q:1a}
lq\le 2, \qq{then} \boxed{\g=l},\qquad \text{for}
\end{equation} 
$q\in\left(\tfrac{N}{2},\widetilde{r_B}\right)\cap
\left(\tfrac{N}{2},\tfrac{2}{l}\right]\ne\emptyset $ for any $\boxed{l<4/N}$.

\begin{enumerate}[label=\textbf{(1.a.\alph*)$_q$},leftmargin=.2cm]
\setlength{\parindent}{0em}
\item
\label{1aa:q}
Assume either  
\begin{equation*}%\label{q:1aa}
l+2^*_{N/q} > 2^*_{N/r}
\iff q>1 /\left(\tfrac{l}{2^*}+\tfrac{1}{r}\right),
\end{equation*} 
and using \eqref{max},  and \eqref{Def:Beta:C} 
\begin{equation}\label{be:1aa}
\be(q)=(l+2^*_{N/q}-2)\, \overline{\te}=(l-1)\, \overline{\te}+ 2^*_{N/r}-1,\q{for}
\end{equation}
\begin{equation}\label{I1aa}
    q\in I_{1,a,a}:=\left(\tfrac{N}{2},\widetilde{r_B}\right)\cap
\left(\tfrac{N}{2},\tfrac{2}{l}\right]\cap \left(1 /\left(\tfrac{l}{2^*}+\tfrac{1}{r}\right),\widetilde{r_B}\right).
\end{equation}
Observe that  the third sub-interval is non empty  if and only if
\begin{equation*}%\label{l:1aa}
\tfrac{l}{2^*}+\tfrac{1}{r}>
\tfrac{1}{N}+\tfrac{N-1}{Nr_B}
\iff l>l_1,
\end{equation*}
and so  
\begin{equation}\label{I:1aa:nonempty}
\qq{if}\boxed{l_1<l<4/N}\implies I_{1,a,a} \ne\emptyset .
\end{equation}
Since \eqref{r:<:rB}, the lower bound of $l$ in \eqref{I:1aa:nonempty}
is non-negative.
\item
\label{1ab:q} Assume either
\begin{equation*}%\label{q:1ab}
l+2^*_{N/q} \le 2^*_{N/r}
\iff q\le 1 /\left(\tfrac{l}{2^*}+\tfrac{1}{r}\right),
\end{equation*}
then   using \eqref{max},   and \eqref{Def:Beta:C} 
\begin{equation}\label{be:1ab}
\be(q)=(2^*_{N/r}-2) \overline{\te},\q{for}
\end{equation}
\begin{equation}\label{I1ab}
 q\in I_{1,a,b}:=\left(\tfrac{N}{2},\widetilde{r_B}\right)\cap
\left(\tfrac{N}{2},\tfrac{2}{l}\right]\cap \left(\tfrac{N}{2},1 /\left(\tfrac{l}{2^*}+\tfrac{1}{r}\right)\right].
\end{equation}
The third sub-interval is non empty  if and only if
\begin{equation}\label{l:1ab}
\tfrac{l}{2^*}+\tfrac{1}{r}<\tfrac{2}{N}
\iff l<2^*\big(\tfrac{2}{N}-\tfrac{1}{r}\big)=:l_2,
\end{equation}
and so  
\begin{equation*}%\label{I:1ab:nonempty}
\qq{if}\boxed{l<\min\big\{4/N,l_2\big\}}\implies I_{1,a,b} \ne\emptyset ,\checkmark
\end{equation*}
moreover
$$
\min\big\{4/N,l_2\big\}=
\begin{cases}
4/N &\text{ if }    r>N^2/4,\\
l_2&\text{ if }    r\le N^2/4.
\end{cases}
$$
\end{enumerate}

\item
\label{1b:q}
If now we assume on the contrary that 
\begin{equation}\label{ga:1b:q}
lq> 2, \q{then} \boxed{\g=\frac{2\,[\frac{N}{q}(1-l)+l]}{N(1-l)+2}},\text{ for }
q\in \left(\tfrac{N}{2},\widetilde{r_B}\right)\cap \left(\tfrac{2}{l},\widetilde{r_B}\right)
\end{equation} 
and
\begin{equation}\label{l3}
\left(\tfrac{2}{l},\widetilde{r_B}\right)\ne\emptyset \q{for any} \boxed{l>2\big[\tfrac{1}{N}+\tfrac{N-1}{Nr_B}\big]=:l_3}.    
\end{equation}

Moreover, using the definition of $\g$ in \eqref{ga:1b:q}, we have
\begin{align}
(\gamma+2^*_{N/q}-2) \, \overline{\te}
&=\left[\frac{(\g-1)}{2^*_{N/q}-1}+ 1\right](2^*_{N/r}-1) 
\label{ga:1b:2}
=\frac{2(2-l)}{[N(1-l)+2]}(2^*_{N/r}-1),
\end{align}
which is independent of $q$.
\begin{enumerate}[label=\textbf{(1.b.\alph*)}$_q$, leftmargin=.2cm]
\item
\label{1ba:q}
Assume either
\begin{equation*}%\label{q:1ba}
\gamma+2^*_{N/q} > 2^*_{N/r}
\iff q>\frac1{\tfrac{N+2}{2N}-\big(\tfrac{2}{N}-\tfrac{1}{r}\big)\tfrac{N(1-l)+2}{2(2-l)}}=\tfrac{2Nr(2-l)}{(N-2)rl+N[N(1-l)+2]} =:r_{1B},
\end{equation*} 
then   using \eqref{max},   \eqref{Def:Beta:C}  and \eqref{ga:1b:2}
\begin{equation}\label{be:1ba}
\be(q)=\frac{2(2-l)}{[N(1-l)+2]}(2^*_{N/r}-1) \q{is independent of} q,
\text{ for}
\end{equation}
\begin{equation}\label{I1ba}
q\in I_{1,b,a}:=\left(\tfrac{N}{2},\widetilde{r_B}\right)\cap
\left(\tfrac{2}{l},\widetilde{r_B}\right)\cap \left(\tfrac{2Nr(2-l)}{(N-2)rl+N[N(1-l)+2]},\widetilde{r_B}\right).
\end{equation}
The third sub-interval is nonempty whenever
\begin{align}\label{l:1ba}
&\tfrac{1}{N}+\tfrac{N-1}{Nr_B}<\tfrac{1}{2-l}\left[\tfrac{(N-2)l}{2N}+\tfrac{N(1-l)+2}{2r} \right]\nonumber\\
&\iff 2\big[\tfrac{1}{N}+\tfrac{N-1}{Nr_B}\big]-\tfrac{N+2}{2r}<
l\big[\tfrac{N-1}{Nr_B}+\tfrac{1}{2}-\tfrac{N}{2r}\big].
\end{align}

Assume either  
\begin{equation*}%\label{I:1ba:nonempty}
\left.
\begin{aligned}
&
r>\frac{\tfrac{N}{2}}{\tfrac{N-1}{Nr_B}+\tfrac{1}{2}},\text{ and }
l>\frac{l_3-\tfrac{N+2}{2r}}{\tfrac{N-1}{Nr_B}+\tfrac{1}{2}-\tfrac{N}{2r}},\\
&\text{or  }r=\frac{\tfrac{N}{2}}{\tfrac{N-1}{Nr_B}+\tfrac{1}{2}},
\qq{for all} l,\\
&\text{or  }r<\frac{\tfrac{N}{2}}{\tfrac{N-1}{Nr_B}+\tfrac{1}{2}},\
l<\frac{\tfrac{N+2}{2r}-l_3}{\tfrac{N}{2r}-\tfrac{N-1}{Nr_B}-\tfrac{1}{2}}\\
&\text{or  }r=\frac{N+2}{4\big[\tfrac{N-1}{Nr_B}+\tfrac{1}{N}\big]}>\frac{N(N-1)}{N-2},
\end{aligned}
\right\}
\implies
I_{1,b,a} \ne\emptyset .
\end{equation*}
Observe that 
$$
\frac{\tfrac{N}{2}}{\tfrac{N-1}{Nr_B}+\tfrac{1}{2}}<\frac{N+2}{4\big[\tfrac{N-1}{Nr_B}+\tfrac{1}{N}\big]},
$$
and so, when $r<
\frac{\tfrac{N}{2}}{\tfrac{N-1}{Nr_B}+\tfrac{1}{2}}$,
the LHS and the RHS of \eqref{l:1ba} are both negative.
\item
\label{1bb:q} Assume either 
\begin{equation*}%\label{q:1bb}
\gamma+2^*_{N/q} \le 2^*_{N/r}
\iff q\le \tfrac{2Nr(2-l)}{(N-2)rl+N[N(1-l)+2]},
\end{equation*} 
then   using \eqref{max},  and \eqref{Def:Beta:C} 
\begin{equation}\label{be:1bb}
\be(q)=(2^*_{N/r}-2) \frac{2^*_{N/r}-1}{2^*_{N/q}-1},\q{for}
\end{equation} 
\begin{equation}\label{I1bb}
    q\in I_{1,b,b}:=\left(\tfrac{N}{2},\widetilde{r_B}\right)\cap
\left(\tfrac{2}{l},\widetilde{r_B}\right)\cap \left(\tfrac{N}{2},\tfrac{2Nr(2-l)}{(N-2)rl+N[N(1-l)+2]}\right]. 
\end{equation}
The third sub-interval is nonempty whenever
\begin{align*}%\label{l:1ba}
\tfrac{1}{2}<\tfrac{2r(2-l)}{(N-2)rl+N[N(1-l)+2]}
&\iff [(N-2)r-N^2+4r]l<8r-2N-N^2\\
&\iff [(N+2)r-N^2]l<8r-N(N+2).
\end{align*}
\begin{equation*}%\label{I:1bb:nonempty}
\left.\begin{cases}
\text{If either  }r>\tfrac{N^2}{N+2},&\q{and}
l<\tfrac{8r-N(N+2)}{(N+2)r-N^2},\\
\text{either  }r<\tfrac{N^2}{N+2},
&\q{and} l>\tfrac{N(N+2)-8r}{N^2-(N+2)r},
\end{cases}\right\} \implies I_{1,b,b} \ne\emptyset .
\end{equation*}
\end{enumerate}
\end{enumerate}
\bigskip

\item
Assume  on the other hand  
\begin{equation}\label{r:rtildeB}
r< \widetilde{r_B} \iff  2^*\big(\tfrac{1}{r}-\tfrac{1}{N}\big)- \tfrac{2_*}{r_B}> 0,
\end{equation} 
see \eqref{def:rB:tilde}. Then $q\in\left(\tfrac{N}{2},r\right)$ with $r<\widetilde{r_B} .$ 

\begin{enumerate}[label=\textbf{(2.\alph*)}$_q$, leftmargin=.2cm]
\item
%\label{2a:q}
Now we assume also \eqref{q:1a} , but for
$q\in\left(\tfrac{N}{2},r\right)\cap
\left(\tfrac{N}{2},\tfrac{2}{l}\right]$, and as in \ref{1a:q}, this interval is non empty for any $l<4/N$.

\begin{enumerate}[label=\textbf{(2.a.\alph*)}$_q$, leftmargin=.2cm]
\item
\label{2aa:q}
Either  
$$
l+2^*_{N/q}-2 > \tfrac{2^*}{N}-\tfrac{2_*}{r_B}\iff q>1 \big/\big(\tfrac{l}{2^*}+\tfrac{1}{\widetilde{r_B}}\big),
$$  
and then   using \eqref{max},  and \eqref{Def:Beta:C}, $\be(q)$  is as given by \eqref{be:1aa}, but now for 
\begin{equation}\label{I2aa}
    q\in I_{2,a,a}:=\left(\tfrac{N}{2},r\right)\cap
\left(\tfrac{N}{2},\tfrac{2}{l}\right]\cap \big(1 /\big(\tfrac{l}{2^*}+\tfrac{1}{\widetilde{r_B}}\big),r\big).
\end{equation}
The third sub-interval is non empty  if and only if
\begin{equation*}%\label{l:2aa}
\tfrac{l}{2^*}+\tfrac{1}{N}+\tfrac{N-1}{Nr_B}>
\tfrac{1}{r}
\iff l>2^*\big(\tfrac{1}{r}-\tfrac{1}{N}\big)-\tfrac{2_*}{r_B},
\end{equation*}
and so  
\begin{equation}\label{I:2aa:nonempty}
\qq{if}\boxed{2^*\big(\tfrac{1}{r}-\tfrac{1}{N}\big)-\tfrac{2_*}{r_B}<l<4/N}\implies I_{2,a,a} \ne\emptyset .\checkmark
\end{equation}

\item
%\label{2ab:q} 
Either 
$$
l+2^*_{N/q}-2 \le \tfrac{2^*}{N}-\tfrac{2_*}{r_B} \iff q\le 1 \big/\big(\tfrac{l}{2^*}+\tfrac{1}{\widetilde{r_B}}\big),
$$ 
and then   using \eqref{max},  and \eqref{Def:Beta:C}, 
$$\be(q):=\left(\frac{2^*}{N}-\frac{2_*}{r_B}\right)\frac{2^*_{N/r}-1}{2^*_{N/q}-1},
$$
now for 
\begin{equation}\label{I2ab}
    q\in I_{2,a,b}:=\left(\tfrac{N}{2},r\right)\cap
\left(\tfrac{N}{2},\tfrac{2}{l}\right]\cap \big(\tfrac{N}{2},1 /\big(\tfrac{l}{2^*}+\tfrac{1}{\widetilde{r_B}}\big)\big].
\end{equation}
The third sub-interval is non empty  if and only if
\begin{equation*}%\label{l:2ab}
\tfrac{l}{2^*}+\tfrac{1}{N}+\tfrac{N-1}{Nr_B}<
\tfrac{2}{N}
\iff l<\tfrac{2}{N-2}-\tfrac{2_*}{r_B},
\end{equation*}
and so 
\begin{equation*}%\label{I:2ab:nonempty}
\qq{if}\boxed{l<\min\big\{4/N,\tfrac{2}{N-2}-\tfrac{2_*}{r_B}\big\}}
\implies I_{2,a,b} \ne\emptyset .\checkmark
\end{equation*}
\end{enumerate}

\item
%\label{2b:q}
Now, assuming \eqref{ga:1b:q}, 
reasoning as in \ref{1b:q},
\eqref{ga:1b:2} holds, but now  for $q\in \left(\tfrac{N}{2},r\right)\cap
\left(\tfrac{2}{l},r\right)\ne\emptyset$ for any $\boxed{l>2/r}$.\\

\begin{enumerate}[label=\textbf{(2.b.\alph*)}$_q$, leftmargin=.2cm]
\item
Either 
\begin{equation}\label{ineq:q:2ba}
\gamma+2^*_{N/q}-2 > \tfrac{2^*}{N}-\tfrac{2_*}{r_B}\iff q>\tfrac{2Nr_B(2-l)}{r_B[N+2(1-l)]+(N-1)[N(1-l)+2]},
\end{equation}
then   using \eqref{max},  and \eqref{Def:Beta:C},  
$\be(q)$ is as given by \eqref{be:1ba}, but now for 
\begin{equation}\label{I2ba}
    q\in I_{2,b,a}:=\left(\tfrac{N}{2},r\right)
\cap\left(\tfrac{2}{l},r\right)
\cap\left(\tfrac{2Nr_B(2-l)}{r_B[N+2(1-l)]+(N-1)[N(1-l)+2]},r\right).
\end{equation}
The third sub-interval is  nonempty  if and only if
\begin{align*}
& \tfrac{2Nr_B(2-l)}{r_B[N+2(1-l)]+(N-1)[N(1-l)+2]}<r \\
&\iff 2Nr_B + (1-l)\big(2Nr_B-2rr_B-N(N-1)\big)<r\big[r_BN +2(N-1)\big].
\end{align*}
Observe that
\begin{align}
&2Nr_B-2rr_B-N(N-1)>0\iff \frac{N[2r_B-(N-1)]}{2r_B}>r\nonumber\\
\label{r:rB}
&\iff \frac{1}{r}>\frac{2r_B\mp (N-1)}{N[2r_B-(N-1)]}=\frac{1}{N}
+\frac{N-1}{N[2r_B-(N-1)]}.     
\end{align}
Using \eqref{r:rtildeB} and that $\tfrac{1}{r_B}>\frac{1}{2r_B-(N-1)}\iff 2r_B-(N-1)>r_B\ \checkmark,$ we deduce that
\eqref{r:rB} holds,  hence the third sub-interval is  nonempty  if and only if
\begin{equation*}%\label{l:2ba}
\boxed{l>\max\Big\{2/r,1+\tfrac{2Nr_B-r[r_BN +2(N-1)]}{2Nr_B-2rr_B-N(N-1)}\Big\}.}
\end{equation*}
\item
%\label{2bb:q} 
Either 
\begin{equation}\label{ineq:q:2bb}
\gamma+2^*_{N/q}-2 \le \tfrac{2^*}{N}-\tfrac{2_*}{r_B}
\iff q\le \tfrac{2Nr_B(2-l)}{r_B[N+2(1-l)]+(N-1)[N(1-l)+2]},
\end{equation} 
then   using \eqref{max},  and \eqref{Def:Beta:C} 
\begin{equation}\label{be:2bb} 
\be(q)=\left(\frac{2^*}{N}-\frac{2_*}{r_B}\right) \overline{\te},\qquad \text{for}
\end{equation}
\begin{equation}\label{I2bb} 
q\in I_{2,b,b}:=\left(\tfrac{N}{2},r\right)
\cap\left(\tfrac{2}{l},r\right)
\cap \left(\tfrac{N}{2},\tfrac{2Nr_B(2-l)}{r_B[N+2(1-l)]+(N-1)[N(1-l)+2]}\right].
\end{equation}
The third sub-interval is  nonempty  if and only if
\begin{align*}
&\tfrac{1}{2} < \tfrac{2r_B(2-l)}{r_B[N+2(1-l)]+(N-1)[N(1-l)+2]} \\
&\iff [N(N-1)-2r_B](1-l)< -(N-4)r_B-2(N-1),
\end{align*}
which holds   if and only if
\begin{align*}%\label{l:2bb}
&\text{either }r_B< \tfrac{1}{2}\ N(N-1),\q{and }l>1+\tfrac{(N-4)r_B+2(N-1)}{N(N-1)-2r_B},\\
&\text{either } r_B> \tfrac{1}{2}\ N(N-1),\q{and }l<1-\tfrac{(N-4)r_B+2(N-1)}{2r_B-N(N-1)}. 
\end{align*}
At this point, observe that for $r_B> \tfrac{1}{2}\ N(N-1),$
\begin{equation*}
1<\tfrac{(N-4)r_B+2(N-1)}{2r_B-N(N-1)} \iff  0<(N-6)r_B+(N+2)(N-1).
\end{equation*}
This inequality holds for any $N\ge 6.$ If $N<6,$ the above inequality holds if and only if $r_B<\frac{(N+2)(N-1)}{6-N}.$ Consequently, $I_{2,b,b}\ne\emptyset$ if and only if $l>2/r$ and

\begin{align*}%\label{l:2bb}
&\text{either }\boxed{r_B< \tfrac{1}{2}\ N(N-1),\q{and  }l>1+\tfrac{(N-4)r_B+2(N-1)}{N(N-1)-2r_B}},\\
&\text{or } \boxed{N\ge 6,\ r_B> \tfrac{1}{2}\ N(N-1)\text{ and }l<1-\tfrac{(N-4)r_B+2(N-1)}{2r_B-N(N-1)}},
\\
&\text{or } \boxed{N< 6,\ r_B\in \big(\tfrac{N(N-1)}{2}\ ,\tfrac{(N+2)(N-1)}{6-N}\big)},\\
&\qquad\qquad \q{and }\boxed{ l<1-\tfrac{(N-4)r_B+2(N-1)}{2r_B-N(N-1)}}. 
\end{align*}
\end{enumerate}
\end{enumerate}
\end{enumerate}

\medskip
  
{\bf Part II.} {\it Explicit definition of $\be_0$ on each sub-interval.}
\medskip

Now, we look for the infimum of the exponent $\be(q)$
$$
\be_0=\be_0(i,\al,\be):=\inf \be(q) \q{for} q\in I_{i,\al,\be},\ i=1,2,\ \al,\be\in\{a,b\}.
$$
Note   that, 
\begin{equation}\label{theta-derivative}
\overline{\te}\,'(q) = - \frac{2^*(2^*_{N/r}-1)}{q^2\bigl(2^*_{N/q}-1\bigr)^2} 
= - \frac{2^*\ \overline{\te}}{q^2\bigl(2^*_{N/q}-1\bigr)}
<0.
\end{equation}

\begin{enumerate}[label=\textbf{(\arabic*)}, leftmargin=.2cm]
\setlength{\parindent}{0em}
\item
%\label{r<rB:0}
Assume  $r\ge \widetilde{r_B}.$
\begin{enumerate}[label=\textbf{(1.\alph*)}$_0$,leftmargin=.2cm]
\setlength{\parindent}{0em}
\item
%\label{1a:0} 
The interval for $q$ is now defined as $q\in \left(\tfrac{N}{2},\widetilde{r_B}\right)\cap \left(\tfrac{N}{2},\tfrac{2}{l}\right]$, 
see \ref{1a:q}.
Observe that the second sub-interval is nonempty for any $l<4/N,$ and observe that $4/N<1+2/N.$ 
\begin{enumerate}[label=\textbf{(1.a.\alph*)}$_0$,leftmargin=.2cm]
\setlength{\parindent}{0em}
\item 
\label{1aa:0} 
The function $\be=\be(q)$ is defined by \eqref{be:1aa}  for $q$ on the interval  $I_{1,a,a}$,  given by \eqref{I1aa}, see \ref{1aa:q}.
\medskip

If $l<1$ then $\be$ is increasing, so the infimum is reached at the maximum of the lower limits of the subintervals defining $I_{1,a,a}$ (see \eqref{I1aa}), either
$\be\big(1 /\left(\frac{l}{2^*}+\frac{1}{r}\right)\big)$
if $N/2< 1 /\left(\tfrac{l}{2^*}+\tfrac{1}{r}\right)$, or  $\be(N/2)$ otherwise. Observe that
\begin{equation*}%\label{N:l}
N/2<1 /\left(\tfrac{l}{2^*}+\tfrac{1}{r}\right)\iff l< l_2. 
\end{equation*}

If $l>1$ then $\be$ is decreasing, see \eqref{theta-derivative}, so the infimum is reached at the minimum of the upper limits of the subintervals defining  $I_{1,a,a}$, either $\be(\widetilde{r_B})$ if $\widetilde{r_B}<2/l$, or $\be(2/l)$ otherwise, and
\begin{equation}\label{rB:l}
\widetilde{r_B}\le 2/l\iff l\le l_3.    
\end{equation}
Besides, $1<l_3\iff r_B<2_*$, which is only possible when $N<4$,  where we have used that $r_B>N-1$.
Then, for $\boxed{l<\tfrac{4}{N}},$
$$
\be_0(1,a,a)=
\begin{cases}
\boxed{(l-1)\, \frac{(2^*_{N/r}-1)}{2^*_{N(\frac{l}{2^*}+\frac{1}{r})}-1} + 2^*_{N/r}-1} \\
\qquad\qquad\qquad\qquad\qquad\qquad
\text{if } l< \min\big\{2^*\left(\tfrac{2}{N}-\tfrac{1}{r}\right),1\big\},\\[.1cm]
\boxed{l(2^*_{N/r}-1)}\qquad\qquad\qquad\ \
\text{if } l\in \big[2^*\left(\tfrac{2}{N}-\tfrac{1}{r}\right),1\big],\\
\boxed{(l-1)\, \frac{(2^*_{N/r}-1)}{2_{*N/r_B}-1} + 2^*_{N/r}-1}\\
\qquad\qquad\qquad\qquad\qquad\qquad
\text{if } l\in \big(1,l_3\big],\\ 
\boxed{(l-1)\, \frac{(2^*_{N/r}-1)}{2^*_{Nl/2}-1} + 2^*_{N/r}-1}\\
\qquad\qquad\qquad\qquad\qquad\qquad
\text{if } l>\max\big\{1,l_3\big\}.
\end{cases}
$$ 
\medskip

\item
\label{1ab:0}  
The function $\be=\be(q)$ is defined by \eqref{be:1ab}  for $q$ on the interval   $I_{1,a,b}$,  given by \eqref{I1ab}, see \ref{1ab:q}.

Now $\be$ is decreasing (see \eqref{theta-derivative}), so the infimum is reached at the minimum of the upper limits of the subintervals defining  $I_{1,a,b}$, either $\be(\widetilde{r_B})$, or $\be(2/l)$, or $\be\big(1 /\left(\frac{l}{2^*}+\frac{1}{r}\right)\big)$. Observe that
\begin{align*}
%\label{r:til:B:l:2*}
\widetilde{r_B} \le  1 /\left(\tfrac{l}{2^*}+\tfrac{1}{r}\right) &\iff l\le l_1,\\
%\label{2:l:l:2*}
\tfrac{2}{l} \le  1 /\left(\tfrac{l}{2^*}+\tfrac{1}{r}\right) &\iff l\ge N/r. 
\end{align*}
Using the above and \eqref{rB:l}, 
$$
\be_0(1,a,b)=
\begin{cases}
\be(\widetilde{r_B}),\quad
\text{if } 
l\le \min\big\{l_1, 
\tfrac{2}{N}+\tfrac{2(N-1)}{Nr_B}\big\} ,\\ 
\be\big(1 /\left(\frac{l}{2^*}+\frac{1}{r}\right)\big) ,\quad
\text{if } l_1\leq l\leq \frac{N}{r} ,\\
\be(2/l),\quad
\text{if } 
l\ge \max\big\{N/r, l_3\big\}.
\end{cases}
$$
Observe that 
\begin{align*}
&l_1< l_3     \iff r<\tfrac{N^2/2}{\frac{N-1}{r_B}+1}\\
&\iff l_1< \tfrac{N}{r}\iff 2\left[\tfrac{1}{N}+\tfrac{N-1}{Nr_B}\right]<\tfrac{N}{r}.    
\end{align*}

So,   for $\boxed{l<\tfrac{4}{N}},$ $\boxed{r<\tfrac{N^2/2}{\tfrac{N-1}{r_B}+1}},$ 
we have
$$
\tfrac{2_*}{r_B} +2^*\!\left(\tfrac1N-\tfrac1r\right) < 2\!\left[\tfrac1N+\tfrac{N-1}{Nr_B}\right] < \tfrac{N}{r},\quad\text{and}
$$
$$
\be_0(1,a,b):=
\begin{cases}
\boxed{\frac{(2^*_{N/r}-2)\,(2^*_{N/r}-1)}{2_{*N/r_B}-1}}\quad
\text{if }  
l\le \tfrac{2_*}{r_B}+2^*\!\left(\tfrac1N-\tfrac1r\right) ,\\ 
\boxed{\frac{(2^*_{N/r}-2)\,(2^*_{N/r}-1)}{2^*_{N(\frac{l}{2^*}+\frac{1}{r})}-1}}\quad
\text{if } \tfrac{2_*}{r_B}
+2^*\!\left(\tfrac1N-\tfrac1r\right)
\le l\le \tfrac{N}{r},\\
\boxed{\frac{(2^*_{N/r}-2)\,(2^*_{N/r}-1)}{2^*_{Nl/2}-1}}\quad
\text{if } 
l\ge \tfrac{N}{r}.
\end{cases}
$$

\bigskip

\noindent
Now,for $\boxed{l<\tfrac{4}{N}},$  $\boxed{r\ge \tfrac{N^2/2}{\tfrac{N-1}{r_B}+1}}$,  we have 
\[
\tfrac{N}{r}\le 2\!\left[\tfrac1N+\tfrac{N-1}{Nr_B}\right]
\le
\tfrac{2_*}{r_B}
+2^*\!\left(\tfrac1N-\tfrac1r\right),\quad\text{and}
\]
$$
\be_0(1,a,b):=
\begin{cases}
\boxed{\frac{(2^*_{N/r}-2)\,(2^*_{N/r}-1)}{2_{*N/r_B}-1}}\quad
\text{if }  
l\le 2\!\left[\tfrac1N+\tfrac{N-1}{Nr_B}\right], \\ 
\boxed{\frac{(2^*_{N/r}-2)\,(2^*_{N/r}-1)}{2^*_{Nl/2}-1}}\quad
\text{if } 
l\ge 2\!\left[\tfrac1N+\tfrac{N-1}{Nr_B}\right].
\end{cases}
$$
\end{enumerate}
\bigskip

\item
%\label{1b:0} 
Assume $l>l_3$. Now, the
 interval is $\left(\tfrac{N}{2},\widetilde{r_B}\right)
\cap\left(\tfrac{2}{l},\widetilde{r_B}\right],$ nonempty whenever $2/l<\widetilde{r_B}\iff l>\tfrac{2}{N}+\tfrac{2(N-1)}{Nr_B},$ where we have used \eqref{rB:l}.

\begin{enumerate}[label=\textbf{(1.b.\alph*)}$_0$, leftmargin=.2cm]
\item
%\label{1ba:0} 
Now the function $\be$ is independent of $q$ on the interval \eqref{I1ba} (see \ref{1ba:q}), and 
$$
\be_0(1,b,a)=\frac{2(2-l)}{[N(1-l)+2]}(2^*_{N/r}-1),\q{for}\boxed{l>\tfrac{2}{N}+\tfrac{2(N-1)}{Nr_B}}
$$ 
see \eqref{be:1ba}.
\item
%\label{1bb:0} 
The function $\be=\be(q)$ is defined by \eqref{be:1bb}  for $q$ on the interval   $I_{1,b,b}$, given by \eqref{I1bb},  see \ref{1bb:q}.

Here, $\be$ is decreasing (see \eqref{theta-derivative}), so the infimum is reached at the minimum of the upper limits of the intervals defining $I_{1,b,b}$, either $\be(\widetilde{r_B})$,  or $\be\big(1 /\big[\tfrac{N+2}{2N}-\big(\tfrac{2}{N}-\tfrac{1}{r}\big)\tfrac{N(1-l)+2}{2(2-l)}\big]\big)$. 
Observe that, using \eqref{def:rB:tilde}
\begin{align}
 &\widetilde{r_B} \le  1 /\big[\tfrac{N+2}{2N} -\big(\tfrac{2}{N}-\tfrac{1}{r}\big)\tfrac{N(1-l)+2}{2(2-l)}\big]\nonumber\\
&\iff \tfrac{N-1}{Nr_B} \ge \tfrac{1}{2}-\big(\tfrac{2}{N}-\tfrac{1}{r}\big)\big(\tfrac{N}{2}-\tfrac{(N-2)}{2(2-l)}\big)\nonumber \\
 &\iff \tfrac{(N-2)}{2} \le \frac{\tfrac{1}{2}+\tfrac{N-1}{Nr_B}-\tfrac{N}{2r}}{\tfrac{2}{N}-\tfrac{1}{r}}\,(2-l).
 \label{1bb0:interv}
\end{align}
Observe also that a necessary condition is the following one
\begin{align}\label{NrBr}
&
\tfrac{1}{2}+\tfrac{N-1}{Nr_B}>\tfrac{N}{2r}
\iff r>\frac{N/2}{\tfrac{1}{2}+\tfrac{N-1}{Nr_B}}.
\end{align}
Under condition \eqref{NrBr}, Eq. \eqref{1bb0:interv} is equivalent to
\begin{equation}\label{l:1bb0}
l\le 2-\frac{\tfrac{2}{N}-\tfrac{1}{r}}{\frac{2}{N-2}\left[\tfrac{1}{2}+\tfrac{N-1}{Nr_B}-\tfrac{N}{2r}\right]} = \frac{\tfrac{4}{N(N-2)}+\tfrac2{N}\tfrac{2_*}{r_B}-\tfrac{(2^*-1)}{r}}{\tfrac{1}{N-2}+\tfrac1{N}\tfrac{2_*}{r_B}-\tfrac{2^*}{2r}}=:l_4.  
\end{equation}
Thus, assuming  \eqref{NrBr},
$$
\be_0(1,b,b)=\begin{cases}
\be\big(\widetilde{r_B}\big) ,&\text{if } l\le l_4 ,\\ 
\be \bigg(\frac1{\big[\tfrac{N+2}{2N} -\big(\tfrac{2}{N}-\tfrac{1}{r}\big)\tfrac{N(1-l)+2}{2(2-l)}\big]}\bigg),&\text{if } l> l_4.
\end{cases}
$$
If \eqref{NrBr} do not holds, then $\widetilde{r_B} >  1 /\big[\tfrac{N+2}{2N} -\big(\tfrac{2}{N}-\tfrac{1}{r}\big)\tfrac{N(1-l)+2}{2(2-l)}\big]$
and $\be_0(1,b,b)=\be \big(1 /\big[\tfrac{N+2}{2N} -\big(\tfrac{2}{N}-\tfrac{1}{r}\big)\tfrac{N(1-l)+2}{2(2-l)}\big]\big),$ so  for $\boxed{l>\tfrac{2}{N}+\tfrac{2(N-1)}{Nr_B}},$

$$
\be_0(1,b,b)=\begin{cases}
\boxed{(2^*_{N/r}-2)\, \frac{(2^*_{N/r}-1)}{2_{*N/r_B}-1}}\\
\qquad\quad\text{if 
} \tfrac{1}{2}+\tfrac{N-1}{Nr_B}>\tfrac{N}{2r},
\text{ and }l\le l_4,
\\[.5cm] 
\boxed{(2^*_{N/r}-2)\, \frac{(2-l)(N-2)(2^*_{N/r}-1) }{N\big(\tfrac{2}{N}-\tfrac{1}{r}\big)[N(1-l)+2]}} \\[4mm] 
\qquad\quad\text{if either } \tfrac{1}{2}+\tfrac{N-1}{Nr_B}\le\tfrac{N}{2r},\text{ or }\\[3mm]
\qquad\quad \tfrac{1}{2}+\tfrac{N-1}{Nr_B}>\tfrac{N}{2r},
\text{ and }
l> l_4.
\end{cases}
$$
\end{enumerate}
\end{enumerate}
\medskip

\item
%\label{r>rB:0}
Assume  on the other hand  $r<\widetilde{r_B} \iff \frac{1}{r}> \frac{1}{N}+\frac{N-1}{Nr_B} ,$ see \eqref{max}.

\begin{enumerate}[label=\textbf{(2.\alph*)}$_0$, leftmargin=.2cm]
\item \quad
%\label{2a:0}
Assume   $lq\le 2,$ then $\g=l.$
\begin{enumerate}[label=\textbf{(2.a.\alph*)}$_0$, leftmargin=.2cm]
\item
%\label{2aa:0} 
The function $\be=\be(q)$ is defined by \eqref{be:1aa}  for $q$ on the interval   $I_{2,a,a}$, given by \eqref{I2aa}, see \ref{2aa:q}.
Either  $l+2^*_{N/q}-2 > \tfrac{2^*}{N}-\tfrac{2_*}{r_B}
\iff q>1 /\big(\frac{l}{2^*}+\frac{1}{\widetilde{r_B}}\big),$
so using \eqref{max},  and \eqref{Def:Beta:C}, the function $\be(q)$ is defined as in \eqref{be:1aa}, for $q\in I_{2,a,a}.$
Observe that a necessary condition ensuring that the third sub-interval of $I_{2,a,a}$ is non empty, is the following one
$$
1\big/\big(\tfrac{l}{2^*}+\tfrac{1}{\widetilde{r_B}}\big)<2/l\iff l<1+(N-1)/r_B.
$$
As in case \ref{1aa:0}, if $l<1$, then $\be$ is increasing.  So the infimum is reached at the greatest lower bounds of the  intervals defining  $I_{2,a,a}$ (see \eqref{I:2aa:nonempty}), either $\be(\tfrac{N}{2})$, or $\be(2/l)$, or $\be\big(1 /\big(\tfrac{l}{2^*}+\tfrac{1}{\widetilde{r_B}}\big)\big)$. Thus, for $\boxed{l<1}$
$$
\beta_0(2,a,a)=
\begin{cases}
\boxed{l(2^*_{N/r}-1)}
& \text{if } l \le 
\min\Big\{
\tfrac{4}{N}, \tfrac{2(r_B-N+1)}{(N-2)r_B}
\Big\},
\\[1em]
\boxed{\dfrac{(2^*_{N/r}-2)\,(2^*_{N/r}-1)} {2^*_{N/r_1}-1}}
& \text{if }
\tfrac{(N+r)r_B-r(N-1)}{rr_B(N-2)} \le l < 1.
\end{cases}
$$
where 
\begin{equation}\label{Def:r0}
r_1=r_1(l,N,r_B):=\tfrac{(l(N-2)+2)r_B+2(N-1)}{2Nr_B}.
\end{equation}
If $l>1$, then  $\be$ is decreasing (see \eqref{theta-derivative}). So the infimum is reached at the minimum of the upper limits of the intervals defining  $I_{2,a,a}$, either $\be(r)$, or $\be(2/l)$, or $\be\big(1 /\big(\tfrac{l}{2^*}+\tfrac{1}{\widetilde{r_B}}\big)\big)$. 
Thus, for $ \boxed{1\le l<1+(N-1)/r_B}$

$$
\beta_0(2,a,a)=
\begin{cases}
\boxed{l+2^*_{N/r}-2}\ \text{ if }
l\le\min\!\left\{\tfrac{2}{r},
\tfrac{2[(N+r)r_B-r(N-1)]}{rr_B(N-2)}\right\},
\\[5pt]
\boxed{\tfrac{(l-1)(2^*_{N/r}-1)}{2^*_{Nl/2}-1}
+2^*_{N/r}-1}\ \text{ if }l\ge\max\!\left\{\tfrac{2}{r},
\tfrac{N-1}{r_B}+1\right\},
\\[7pt]
\boxed{\tfrac{(l-1)(2^*_{N/r}-1)}{2^*_{N/r_1}-1}+2^*_{N/r}-1} \; \;\text{if } \tfrac{2[(N+r)r_B-r(N-1)]}{rr_B(N-2)}\le l \le\tfrac{N-1}{r_B}+1.
\end{cases}
$$

\item
Either $l+2^*_{N/q}-2 \le 2^*_{N/r}
\iff q\le 1 /\left(\frac{l}{2^*}+\frac{1}{r}\right)
,$ then   using \eqref{max},  and \eqref{Def:Beta:C}, $\be(q)$  
is defined as in \eqref{be:1ab}, for $q\in I_{2,a,b}$ (see \eqref{I2ab} for a definition of $I_{2,a,b}$).

As in \ref{1ab:0}, one verifies that $\beta'(q)<0$
for all $q\in I_{2,a,b}$. Hence the infimum is attained at the least upper bound of the intervals
defining $I_{2,a,b}$ (see \eqref{I2ab}), namely
$
q=\min\big\{
r,\, \tfrac{2}{l},\,1/\big(\tfrac{l}{2^*}+\tfrac{1}{\widetilde{r}_B}\big)\big\},
$
thus
\end{enumerate}

$$
\beta_0(2,a,b)=
\begin{cases}
\boxed{2^*_{N/r}-2}\quad
\text{if }
l \le \min\!\left\{\frac{2}{r},
2^*\big[\tfrac{1}{r}-\tfrac{1}{N}-\tfrac{N-1}{Nr_B}\big]
\right\},\\[3mm] 
\boxed{\frac{(2^*_{N/r}-2)\,(2^*_{N/r}-1)}{2^*_{Nl/2}-1}}\quad
\text{if }\frac{2}{r} \le l \le 2^*\big[\tfrac{1}{r}-\tfrac{1}{N}-\tfrac{N-1}{Nr_B}\big], \\[3mm]
\boxed{\frac{(2^*_{N/r}-2)\,(2^*_{N/r}-1)}
{2^*_{N\big(\tfrac{l}{2^*}+\tfrac{1}{\widetilde{r}_B}\big)}-1}}
\quad
\text{if }
2^*\big[\tfrac{1}{r}-\tfrac{1}{N}-\tfrac{N-1}{Nr_B}\big]
\le l \le 1+\tfrac{N-1}{r_B}.
\end{cases}
$$

\item
If now we assume  also $lq> 2,$ then $\g$ is as in \eqref{ga:1b:q} and 
$(\gamma+2^*_{N/q}-2) \, \overline{\te}$
is independent of $q$ as in \eqref{ga:1b:2} for $q\in \left(\tfrac{N}{2},r\right)\cap\left(\tfrac{2}{l},r\right)$.
\end{enumerate}
\begin{enumerate}[label=\textbf{(2.b.\alph*)}$_0$, leftmargin=.2cm]
\item
Either \eqref{ineq:q:2ba} holds, then   using \eqref{max},  and \eqref{Def:Beta:C}, $\be(q)$ is independent of $q$ for $q\in I_{2,b,a}$ (see \eqref{I2ba}), so $\be_0(2,b,a)$ is also defined by  \eqref{be:1ba}.
\item
Either \eqref{ineq:q:2bb} holds,  then   using \eqref{max},  and \eqref{Def:Beta:C}, $\be(q)$
is defined by \eqref{be:2bb}, for $q\in I_{2,b,b}$ (see \eqref{I2bb}).
Let 
\begin{align}\label{r:123}
&r_2:=\tfrac{2Nr_B(2-l)}{r_B[N+2(1-l)] +(N-1)[N(1-l)+2]}, \\
& r_3:= \tfrac{4Nr_B}{(N+2)[(N-1)+r_B]}, \quad    r_4:= \tfrac{2Nr_B}{N(N-1)+2r_B}.\nonumber
\end{align}
Note that $r_2<r_3<r_4$,  that 
$r\le r_2$ if and only if 
\begin{align*}
&   rr_BN+2rr_B(1-l)+r(N-1)N(1-l)+2r(N-1)\le 2Nr_B(2-l)\\
& \iff (1-l)[2rr_B+r(N-1)N-2Nr_B]\le 2Nr_B-rr_BN-2r(N-1). 
\end{align*}
\begin{equation*}   
r\le r_2\iff \boxed{ l\le \tfrac{4Nr_B-r(N+2)\big[(N-1)+r_B\big] }{2r_B(N-r)-rN(N-1)}}, 
\end{equation*}
Moreover $\boxed{r<r_3\iff 4Nr_B>r(N+2)\big[(N-1)+r_B \big]}$, and that  $\boxed{r>r_4\iff 2Nr_B> r[N(N-1)+2r_B]}$.  
\end{enumerate}
\end{enumerate}

We  have to study, 
$
\boxed{r>r_2},
$
and the infimum is always attained at $q=r_2$, see \eqref{1ab:0} and \eqref{theta-derivative}. Let
$$
l_2:=\begin{cases}
    \tfrac{
4Nr_B-r(N+2)[(N-1)+r_B]
}{
2Nr_B-r[N(N-1)+2r_B]
} & \text{if } r<r_3\\
\tfrac{
r(N+2)[(N-1)+r_B]-4Nr_B
}{
r[N(N-1)+2r_B]-2Nr_B
} & \text{if } r>r_4.
\end{cases}
$$
then, we can get 
$$
\beta_0(2,b,b)=
\begin{cases}
    \frac{2^*}{N}-\frac{2_*}{r_B} & \text{if } r<r_3  \ \text{and}\ l>l_2\\[4mm]
    \frac{\left(\frac{2^*}{N}-\frac{2_*}{r_B}\right) (2^*_{N/r}-1)}{2^*(1-r_2)-1} & \text{if } 
\begin{aligned}
\bigl(r<r_3 \ &\text{and}\ l>l_2\bigr)
\quad\text{or}\\ 
\bigl(r>r_4 \ &\text{and}\ l<l_2\bigr)
\quad\text{or}\\
&r_3<r<r_4.
\end{aligned}
\end{cases}
$$
\end{proof}

\begin{appendix}

%----------------------------------------------------------------
%---------------------A P P E N D I X  A-------------------------
%----------------------------------------------------------------

\section{Moser iteration technique}
\renewcommand{\thesection}{\Alph{section}}
\counterwithin{equation}{section}
\setcounter{equation}{0}
\label{sec:appA}
Let us consider the following boundary value problem
\begin{equation}\label{M1.1.1}
\left \{
\begin{aligned}
-\Delta u +u=& g(x,u, \nabla u), \; \; \; x\in \Om, \\
\frac{\p u}{\p \nu }=& g_{B}(x,u), \;x\in \p \Om ,
\end{aligned}
\right.
\end{equation}
where $\Om \subset \mathbb{R}^N$, $N>2$, is an open, bounded domain with a Lipschitz boundary, and $g:\Om\times \mathbb{R}\rightarrow \mathbb{R}$ and $g_{B}:\p \Om \times \mathbb{R} \rightarrow \mathbb{R}$ are subcritical nonlinear Carathéodory functions.

The following Theorem states that any weak solution is in $L^{\infty}(\Om )\cap L^{\infty}(\p \Om )$, cf. \cite[Theorem 3.1]{Marino_Winkert_NonlAnal_2019} for a proof. 
\begin{thm}\label{th:A}
Let $u\in H^{1}(\Om )$ be a weak solution to \eqref{M1.1.1}. Let $g:\Om \times \mathbb{R} \times \mathbb{R}^{N}\rightarrow \mathbb{R}$, $g_{B}:\p \Om \times \mathbb{R}\rightarrow \mathbb{R}$ be Carathéodory functions, and
\begin{align*}%\label{M1.1.2}
|g(x,s,\xi )| &\le b_{1}|\xi|^{l} + b_{2}(x)(1+| s| ),\\
\text{and} \qquad \qquad | g_{B}(x,s)| &\le b_{B}(x)(1+|s|),
\end{align*}
where $0< l \le \tfrac{N+2}{N}$,  $b_{1}>0$, $b_{2}\in L^{\frac{N}{2}}(\Om )$ and $\; b_B\in L^{N-1}(\p \Om )$.\\
Then $u\in L^{q}(\Om )\cap L^{q}(\p \Om )$ for all $1\le q\le\infty .$ 
\end{thm}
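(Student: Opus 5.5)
The plan is a Moser iteration adapted to the Neumann problem \eqref{M1.1.1}, in the spirit of \cite[Theorem 3.1]{Marino_Winkert_NonlAnal_2019}. The gradient term is absorbed into the left-hand side, and the linear-growth coefficients are handled by splitting them into a small part and a bounded part.

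\textbf{Step 1: test functions and the energy inequality.} It suffices to treat $u^+$; the argument for $u^-$ is symmetric. For $K>0$ and $\kappa\ge 1$ set $u_K:=\min\{u^+,K\}$ and test the weak formulation with
\[
\psi:=u\,u_K^{2(\kappa-1)}\in H^1(\Om),
\]
which is admissible because $u_K$ is bounded. Writing $w:=u\,u_K^{\kappa-1}$, the left-hand side is bounded below by a multiple of $\kappa^{-1}\int_\Om|\na w|^2+\int_\Om w^2$.

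\textbf{Step 2: the three right-hand terms.} The gradient term is bounded by $b_1\int_\Om|\na u|^l|u|u_K^{2(\kappa-1)}$. Using Young's inequality with exponents $2/l$ and $2/(2-l)$, I would write it as
\[
\varepsilon\int_\Om|\na u|^2u_K^{2(\kappa-1)}+C_\varepsilon\int_\Om |u|^{\frac{2}{2-l}}u_K^{2(\kappa-1)}.
\]
The first piece is absorbed into the left-hand side. For the second, I would write $|u|^{2/(2-l)}=|u|^{\frac{2}{2-l}-2}|u|^2$; since $l\le\frac{N+2}{N}$ gives $\frac{2}{2-l}\le 2^*$, the coefficient $|u|^{\frac{2}{2-l}-2}$ lies in $L^{N/2}$, so this piece behaves like a potential term. (For $l>1$ this requires $\frac{2l-2}{2-l}\le\frac{2^*}{N/2}$, which is exactly $l\le\frac{N+2}{N}$.) The terms $b_2(1+|u|)\psi$ and $b_B(1+|u|)\psi$ on the boundary are of the same type, with $L^{N/2}(\Om)$ and $L^{N-1}(\p\Om)$ coefficients.

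\textbf{Step 3: absorbing the potential terms (main obstacle).} Every potential $V\in L^{N/2}$ or $V_B\in L^{N-1}$ is split as $V=V\chi_{\{|V|>A\}}+V\chi_{\{|V|\le A\}}$, with $A$ chosen so that the first part has small norm. The small part gives a contribution
\[
\le\delta\|w\|_{L^{2^*}}^2\le C\delta\|w\|_{H^1}^2,
\]
and on the boundary the trace embedding $H^1(\Om)\hookrightarrow L^{2_*}(\p\Om)$ gives the analogous bound $\delta\|w\|_{L^{2_*}(\p\Om)}^2$. The constant $\delta$ must be taken of order $\kappa^{-1}$, and $A=A(\kappa)$ then grows with $\kappa$. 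To keep this under control I would follow the standard route: prove the improvement first from $\kappa_1=2^*/2$, obtaining $u\in L^{q}$ for every finite $q$ (and likewise on $\p\Om$). After that, the coefficient $|u|^{\frac{2}{2-l}-2}$ lies in $L^{p}$ for some $p>N/2$, so $A$ no longer grows badly. The critical case $l=\frac{N+2}{N}$ is the delicate one. There I plan to use the fact that $u$ is fixed, so its $L^{2^*}$ tail on the set where $|u|$ is large is small; that is, I take $A$ large depending on $u$ itself.

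\textbf{Step 4: iteration and conclusion.} This yields the bound
\[
\|u_K^{\kappa-1}u\|_{L^{2^*}}^2\le C\kappa^{c}\big(\|u\|_{L^{2\kappa}}^{2\kappa}+\|u\|_{L^{2\kappa}(\p\Om)}^{2\kappa}+1\big).
\]
Letting $K\to\infty$ by monotone convergence gives $u\in L^{2^*\kappa}$, and the trace inequality applied to $w$ gives the boundary analogue. Iterating with $\kappa_{j+1}=\frac{2^*}{2}\kappa_j$ yields $u\in L^q(\Om)\cap L^q(\p\Om)$ for all finite $q$. Once the coefficients are in $L^{p}$ with $p>N/2$ (interior) and in $L^{p_B}$ with $p_B>N-1$ (boundary), the constants grow only polynomially in $\kappa$. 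The product $\prod(C\kappa_j^c)^{1/\kappa_j}$ is then finite, and this gives the $L^\infty$ bound for $q=\infty$.
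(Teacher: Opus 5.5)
\textbf{The gap is in Step 4.} Your first stage is the standard Moser/Brezis--Kato route and works for finite $q$: the test function $u\,u_K^{2(\kappa-1)}$, Young's inequality on $|\nabla u|^l|u|$, and splitting each $L^{N/2}$ or $L^{N-1}$ potential into a small-norm part and a bounded part. The paper itself gives no argument, only a reference to Marino--Winkert.

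In Step 4 you assert that after the first stage ``the coefficients are in $L^{p}$ with $p>N/2$ (interior) and in $L^{p_B}$ with $p_B>N-1$ (boundary)''. This holds for the potential $|u|^{\frac{2}{2-l}-2}$ produced by the gradient term. It fails for $b_2$ and $b_B$: these are fixed data, lying only in $L^{N/2}(\Omega)$ and $L^{N-1}(\partial\Omega)$, and no iteration improves them. So the level $A(\kappa)$ needed for $\|b_2\chi_{\{b_2>A\}}\|_{L^{N/2}(\Omega)}\lesssim\kappa^{-1}$ is set by the tail of the distribution function of $b_2$, which can decay arbitrarily slowly. The step constants are then not polynomial in $\kappa$, and $\prod_j C_{\kappa_j}^{1/\kappa_j}$ need not converge. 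If $b_2\in L^p$ with $p>N/2$, H\"older gives $A(\kappa)\sim\kappa^{N/(2p-N)}$, and that is exactly what makes the product finite. As written, your argument yields $u\in L^q(\Omega)\cap L^q(\partial\Omega)$ for every finite $q$ and nothing more.

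\textbf{The gap cannot be closed: for $q=\infty$ the statement is false under the stated hypotheses.} Let $N\ge 3$, $\Omega=B_R(0)$ with $R<1/e$, and $u(x)=\log\log(1/|x|)$. Then $u>0$, $u\in H^1(\Omega)$, and with $t=\log(1/|x|)$,
\[
-\Delta u+u=b_2(x)\,u\quad\text{in }\Omega,\qquad b_2(x)=1+\frac{(N-2)\,t+1}{|x|^2\,t^2\log t}.
\]
Here $b_2\in L^{N/2}(\Omega)$, because $\int_{B_R}b_2^{N/2}\,dx\lesssim 1+\int_{\log(1/R)}^\infty (t\log t)^{-N/2}\,dt<\infty$. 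On $\partial\Omega$ one has $\partial_\nu u\equiv -1/(R\log(1/R))$. The singularity at $0$ does not affect the weak formulation: the flux through $\partial B_\varepsilon$ is $O(\varepsilon^{N-2}/\log(1/\varepsilon))$, and $b_2u\in L^{(2^*)'}(\Omega)$ by H\"older. Thus $g(x,s,\xi)=b_2(x)s$ and a constant $g_B$ satisfy every hypothesis, yet $u\notin L^\infty(\Omega)$.

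The correct conclusion is $1\le q<\infty$. The $L^\infty$ claim needs extra integrability, for instance $b_2\in L^{p}(\Omega)$ with $p>N/2$ and $b_B\in L^{p_B}(\partial\Omega)$ with $p_B>N-1$. That is what is actually available where the paper uses the result, Theorem~\ref{th:reg:nol}. There $b_2=|a_2|(1+|u|^{2^*_{N/r}-2})$ with $a_2\in L^r$, $r>N/2$, and $b_B=|a_B|(1+|u|^{2_{*,N/r_B}-2})$ with $r_B>N-1$. Both land in such spaces once $u\in L^q$ for all finite $q$, so your two-stage scheme does give boundedness in that application.

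\textbf{Two smaller, fixable points.}
\begin{itemize}
\item For $l<1$ the exponent $\frac{2}{2-l}-2$ is negative, so that coefficient is not in $L^{N/2}$. Use $|u|^{2/(2-l)}\le 1+u^2$ instead.
\item With ratio $2^*/2$, your Step 4 inequality needs $u\in L^{2^*\kappa_j}(\partial\Omega)$, but the trace inequality only gives $L^{2_*\kappa_j}(\partial\Omega)$. Either iterate with ratio $2_*/2$, or absorb the bounded boundary part using $\|w\|_{L^2(\partial\Omega)}^2\le\varepsilon\|\nabla w\|_{L^2(\Omega)}^2+C_\varepsilon\|w\|_{L^2(\Omega)}^2$.
\end{itemize}
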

\bigskip

%----------------------------------------------------------------
%---------------------A P P E N D I X  C-------------------------
%----------------------------------------------------------------
\section{Regularity of weak solutions}
\renewcommand{\thesection}{\Alph{section}}
%\numberwithin{equation}{section}
%\setcounter{equation}{0}
\label{sec:appB}

In this section, we establish auxiliary results on further regularity of weak solutions to \eqref{pde}, by assuming that conditions on the growth of the nonlinearities are subcritical or even critical. Using a Moser type procedure, it is known that $u\in L^{q}(\Om )\cap L^{q}(\p \Om )$ for all $1\leq q<\infty $ (see \cite[Theorem 3.1]{Marino_Winkert_NonlAnal_2019}). Moreover, using elliptic regularity theory, we state the following result that guarantees, in particular, Hölder regularity of any weak solution to \eqref{pde}. 

%********************************************************************
%************************ T H E O R E M *****************************
%********************************************************************

\begin{thm}\label{th:reg:nol}
Let $\Om \subset \mathbb{R}^{N}$, $f:\Om\times \mathbb{R} \times \mathbb{R}^N \rightarrow \mathbb{R}$ and $f_{B}:\p \Om  \times \mathbb{R}\rightarrow \mathbb{R}$ be Carath\'eodory functions, such that
\begin{align}\label{R1} 
|f(x,s, \xi)| &\le a_{1}|\xi|^{l} + \widehat{g}(x,s) \qquad \text{and} \\ |f_{B}(x,s)| &\le |a_{B}(x)| \left( 1+|s|^{2_{*,N/r_{B}}-1}\right) ,\nonumber 
\end{align}
where  $0< l \leq\tfrac{N+2}{N}$, $a_{1}\in \mathbb{R}$, $a_{B}\in L^{r_B}(\p \Om )$, $N-1<r_{B}< \infty ,$ and
\begin{align}\label{g:hat}
\widehat{g}(x,s):=|a_{2}(x)| &\left( 1+|s|^{2^{*}_{N/r}-1}\right),\ 
a_{2}\in L^{r}(\Om ),\ \frac{N}{2}<r< \infty. 
\end{align}
Let $u\in H^{1}(\Om )$ be a weak solution to \eqref{pde}, then $u\in L^{q}(\Om )\cap L^{q}(\p \Om )$ for all $1\le q\le \infty .$  \\
Moreover,   $u\in W^{1,m}(\Om )\cap  C^{\nu }(\Omb)$, for $m=m(r,r_B)$ defined by
\begin{equation}\label{def:m:r}
m=r^*\text{ if }r< \widetilde{r_B}, \q{or}   m=\frac{Nr_{B}}{N-1}\text{ if }r\ge  \widetilde{r_B},
\end{equation}
and in both cases, $m>N$.\\ 
Besides, if $0<l<\tfrac{N+2}{N}$, then the following estimates hold
\begin{equation*}%\label{estim:W1m:unificado:2}
\|u\|_{W^{1,m}(\Omega)} \le C\Big( \|\nabla u\|_{L^2(\Omega)}^{\gamma}
+ \|\widehat{g}(\cdot,u)\|_{L^{r}(\Omega)} + \|f_B(\cdot,u)\|_{L^{r_B}(\partial\Omega)}\Big),
\end{equation*}
where  $\g$ is defined as in \eqref{Def:gamma}, and
\begin{equation}\label{estim:C:nu}
\| u\|_{C^{\nu }(\Omb )}\le C\Big( \|\nabla u\|_{L^2(\Omega)}^{\gamma}
+ \|\widehat{g}(\cdot,u)\|_{L^{r}(\Omega)} + \|f_B(\cdot,u)\|_{L^{r_B}(\partial\Omega)}\Big),
\end{equation}
where  $\nu =1-\frac{N}{m}$.\\
Furthermore, 
\begin{equation*}
\|u\|_{L^{\infty}(\p\Om)}\le \|u\|_{C(\Omb)}=\|u\|_{L^{\infty}(\Om)}.   
\end{equation*}  
\end{thm}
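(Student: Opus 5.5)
The proof has three parts: first $L^\infty$ (and $L^q$ for all finite $q$) bounds via Moser iteration, then Sobolev regularity via the auxiliary problem \eqref{eq:W1m:fx:Unif}, and finally Hölder continuity and the quantitative estimates.

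\emph{Step 1 ($L^\infty$ via Theorem \ref{th:A}).} We bring \eqref{pde} into the form \eqref{M1.1.1}. Write
\[
\widehat{g}(x,u)=|a_2(x)|\bigl(1+|u|^{2^*_{N/r}-1}\bigr)\le b_2(x)(1+|u|),
\qquad b_2(x):=|a_2(x)|\bigl(1+|u(x)|^{2^*_{N/r}-2}\bigr).
\]
Since $u\in L^{2^*}$ and $\frac1r+\frac{2^*_{N/r}-2}{2^*}=\frac2N$, Hölder gives $b_2\in L^{N/2}(\Om)$. Analogously, on the boundary we set $b_B:=|a_B|\bigl(1+|u|^{2_{*,N/r_B}-2}\bigr)$; using $u\in L^{2_*}(\p\Om)$ and $\frac1{r_B}+\frac{2_{*,N/r_B}-2}{2_*}=\frac1{N-1}$, we get $b_B\in L^{N-1}(\p\Om)$. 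The gradient term satisfies $a_1|\xi|^l$ with $l\le\frac{N+2}{N}$. Theorem \ref{th:A} (i.e. \cite[Theorem 3.1]{Marino_Winkert_NonlAnal_2019}) then yields $u\in L^q(\Om)\cap L^q(\p\Om)$ for all $1\le q\le\infty$.

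\emph{Step 2 ($W^{1,m}$ regularity and the estimate).} With $u$ bounded, $\overline g:=\widehat g(\cdot,u)\in L^r(\Om)$ with $r>N/2$, and $\overline g_B:=f_B(\cdot,u)\in L^{r_B}(\p\Om)$ with $r_B>N-1$. Hence $u$ solves the auxiliary problem \eqref{eq:W1m:fx:Unif} with $g(x,\xi):=f(x,u(x),\xi)$, which satisfies \eqref{g} after absorbing $a_1$ into the constant (harmless by rescaling $|\xi|^l$, or by noting that the bootstrap only uses $|g|\le C(|\xi|^l+\overline g)$). We apply Theorem \ref{th:W1m:unif} with $q=r$ and $q_B=r_B$; its $m(r,r_B)$ from \eqref{new:m} coincides with \eqref{def:m:r}, and $m>N$ by Remark \ref{rem3}(iv). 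For $l<\frac{N+2}{N}$, \eqref{estim:W1m:unificado} gives exactly the stated $W^{1,m}$ estimate, since $\|\overline g\|_{L^{\min\{r,\widetilde{r_B}\}}}\le C\|\overline g\|_{L^r}$ on a bounded domain.

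The borderline case $l=\frac{N+2}{N}$ is the main obstacle for the qualitative statement. There the bootstrap of Steps 3--4 of Theorem \ref{th:W1m:unif} starts at the fixed point $q_0=\frac2l=\frac{2N}{N+2}=N(1-\frac1l)$ and gives no gain. To handle it, I would first use Step 1 together with the Moser-type argument of \cite{Marino_Winkert_NonlAnal_2019} (testing with truncations $u|u|^{2k}$, which in the standard way give $|\na u|\in L^{2+\delta}$ through the Caccioppoli inequality) to obtain a small improvement $u\in W^{1,2+\delta}$. This makes $q_0>N(1-\frac1l)$, and then the monotone bootstrap of Theorem \ref{th:W1m:unif} applies verbatim. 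If that approach fails, the fallback is to claim only the qualitative statement for $l=\frac{N+2}{N}$, citing \cite{Ho_Winkert_2023}.

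\emph{Step 3 (Hölder regularity and boundary values).} Since $m>N$ and $\p\Om$ is Lipschitz, Morrey's embedding $W^{1,m}(\Om)\hookrightarrow C^{\nu}(\Omb)$ with $\nu=1-\frac Nm$ gives $\|u\|_{C^\nu}\le C\|u\|_{W^{1,m}}$, and combining this with the estimate of Step 2 yields \eqref{estim:C:nu}. Finally, $u$ has a continuous representative on $\Omb$, and the trace $\Gamma u=u|_{\p\Om}$. Therefore $\|u\|_{L^\infty(\p\Om)}\le\max_{\Omb}|u|=\|u\|_{C(\Omb)}$, and continuity gives $\|u\|_{C(\Omb)}=\|u\|_{L^\infty(\Om)}$.
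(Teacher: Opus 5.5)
For $0<l<\frac{N+2}{N}$ your argument is the paper's: Theorem~\ref{th:A} for boundedness, then Theorem~\ref{th:W1m:unif} with $q=r$, $q_B=r_B$ and $\overline g=\widehat g(\cdot,u)$, then Morrey's embedding. Two of your steps are more careful than the paper. You check by H\"older that $b_2\in L^{N/2}(\Om)$ and $b_B\in L^{N-1}(\p\Om)$, where the paper only asserts the Moser structure. You also take $\overline g_B=f_B(\cdot,u)$, which is the actual Neumann datum and the quantity in the stated estimate; the paper instead inserts the majorant $|a_B|(1+|u|^{2_{*,N/r_B}-1})$.

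You are right that $l=\frac{N+2}{N}$ is not covered by Theorem~\ref{th:W1m:unif}: there $q_0=\frac2l=N(1-\frac1l)$ and $m_1=q_0^*=2=m_0$. The paper's proof passes over this silently, although the statement claims $u\in W^{1,m}(\Om)\cap C^\nu(\Omb)$ for $l\le\frac{N+2}{N}$.

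Your repair, however, does not work as described. Testing with $u|u|^{2k}$ only controls $\int_\Om|u|^{2k}|\na u|^2\,dx$, which for bounded $u$ says nothing beyond $\na u\in L^2(\Om)$. Moser-type test functions improve the integrability of $u$, not of $\na u$. The Caccioppoli inequality is the right starting point, but the route to $\na u\in L^{2+\delta}(\Om)$ is Gehring--Meyers higher integrability up to the boundary:
\begin{enumerate}
\item Test with $(u-\bar u_R)\eta^2$ on $B_R\cap\Om$, with $\bar u_R$ the mean over $B_R\cap\Om$.
\item Absorb the gradient term using $|\na u|^l|u-\bar u_R|\le\e|\na u|^2+C_\e|u-\bar u_R|^{2^*}$ (at the borderline $\frac{2}{2-l}=2^*$) together with $u\in L^\infty(\Om)$.
\item Dispose of the Neumann term, e.g.\ by first subtracting the solution of \eqref{eq:W1m:fx:fB} with $\overline g=0$ and $\overline g_B=f_B(\cdot,u)$, which lies in $W^{1,Nr_B/(N-1)}(\Om)$ by Theorem~\ref{th:reg}.
\item Combine the resulting Caccioppoli inequality with Sobolev--Poincar\'e on $B_R\cap\Om$ to get a reverse H\"older inequality, and apply Gehring's lemma.
\end{enumerate}
With $m_0=2+\delta$ one has $q_0>N(1-\frac1l)$, and Claim~1 and Step~4 of the proof of Theorem~\ref{th:W1m:unif} then go through verbatim.

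Your fallback does not close the gap either. As cited in the paper, \cite{Ho_Winkert_2023} yields $L^\infty$ bounds, which you already have from Step~1, not $W^{1,m}$-regularity with $m>N$.
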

\begin{proof}
The proof is as in \cite[Theorem B.1]{Antonio_Arciga_Pardo_Sanchez} and   
\cite[Theorem 3.1]{Marino_Winkert_NonlAnal_2019}, where the nonlinearities satisfy the structure required by Moser's iteration theorem, see Theorem \ref{th:A}. Hence,
\begin{equation}\label{u:Linf}
u\in L^{\infty }(\Omega)\cap L^{\infty }(\partial\Omega).
\end{equation} 
By hypothesis \eqref{R1}- \eqref{g:hat}, for each 
$$
u\in H_0^1(\Om)\cap L^{\infty }(\Omega)\cap L^{\infty }(\partial\Omega)
$$
$$
|f(x,u, \na u)| \le a_{1}|\na u|^{l} + |a_{2}(x)| \left( 1+|u|^{2^{*}_{N/r}-1}\right).
$$ 
Observe that  all the hypothesis of Theorem \ref{th:W1m:unif} are accomplished for 
$$
\overline{g}(x):=\widehat{g}\big(x,u(x)\big),\ \overline{g}_B(x)= |a_B(x)|(1+|u(x)|^{2_{*,N/r_B}-1}).
$$
Indeed, since \eqref{g:hat} and \eqref{u:Linf},  $\overline{g}\in L^r(\Om)$  for  $r>\tfrac{N}{2}$, and  condition \eqref{g} holds.  
Also,   $\overline{g}_B\in L^{r_B}(\p\Om)$ for $r_B> N-1$.

Consequently, $u\in W^{1,m}(\Om)$ where $m=m(r,r_B)$ is defined in \eqref{def:m:r}. Moreover, there exists some $C>0$ independent of $u$ such that\eqref{estim:W1m:unificado} holds, and by Sobolev embeddings, \eqref{estim:C:nu} holds.
\end{proof}

\section{Values of \texorpdfstring{$\beta_0$}{}}\label{ApC}
In this Section, we summarize the different definitions of the exponent 
$$\be_0=\beta_0(i,\alpha,\beta) \q{for} q\in I_{i,\al,\be},\ i=1,2,\ \al,\be\in\{a,b\},
$$
appearing in Corollary \ref{coro} according to the variation of $l,\ r$ and $r_B$. The values of $l_i$, $i=1,2,3,4$ are given by \eqref{r:<:rB}, \eqref{l:1ab}, \eqref{l3},  and \eqref{l:1bb0} respectively; the values of $r_i$, $i=1,2,3,4$ are given by \eqref{Def:r0} and \eqref{r:123}.

\begin{table}[H]
\centering
\renewcommand{\arraystretch}{1.3}
\small
\begin{tabular}{|c|c|c|}
\hline
$\bm{ I_{i,\al,\be}}$ & \textbf{Conditions} & $\bm{\beta_0(i,\al,\be)}$\\
\hline

\multicolumn{3}{|c|}{$\bm{r\ge\widetilde r_B}$}\\\hline
$(1,a,a)$&$l<\min\{ l_2,1 \}$&$\displaystyle(l-1)\tfrac{2^*_{N/r}-1}{2^*_{N(l/2^*+1/r)}-1}+2^*_{N/r}-1$\\\cline{2-3}

&$l_2\le l\le1$&$\displaystyle l(2^*_{N/r}-1)$\\\cline{2-3}

&$1<l\le l_3$&$\displaystyle (l-1)\tfrac{2^*_{N/r}-1}{2_{*N/r_B}-1}+2^*_{N/r}-1$\\\cline{2-3}

&$l>\max\{1,l_3\}$&$\displaystyle (l-1)\tfrac{2^*_{N/r}-1}{2^*_{Nl/2}-1}+2^*_{N/r}-1$\\\hline
$(1,a,b)$&$r<\tfrac{N^2/2}{\tfrac{N-1}{r_B}+1},\,l\le l_1$&$\displaystyle\tfrac{(2^*_{N/r}-2)(2^*_{N/r}-1)}{2_{*N/r_B}-1}$\\\cline{2-3}

&$l_1\le l\le N/r$&$\displaystyle\tfrac{(2^*_{N/r}-2)(2^*_{N/r}-1)}{2^*_{N(l/2^*+1/r)}-1}$\\\cline{2-3}

&$l\ge N/r$&$\displaystyle\tfrac{(2^*_{N/r}-2)(2^*_{N/r}-1)}{2^*_{Nl/2}-1}$\\\cline{2-3}

&$r\ge\tfrac{N^2/2}{\tfrac{N-1}{r_B}+1},\,l\le l_3$&same as first row\\\cline{2-3}

&$r\ge\tfrac{N^2/2}{\tfrac{N-1}{r_B}+1},\,l\ge l_3$&$\displaystyle\tfrac{(2^*_{N/r}-2)(2^*_{N/r}-1)}{2^*_{Nl/2}-1}$\\\hline

$(1,b,a)$&$l>\frac2N+\tfrac{2(N-1)}{Nr_B}$&$\displaystyle\tfrac{2(2-l)}{N(1-l)+2}(2^*_{N/r}-1)$\\ \hline
$(1,b,b)$

&
$\displaystyle
\begin{aligned}
&\tfrac12+\tfrac{N-1}{Nr_B}\le\tfrac{N}{2r}\\[1mm]
&\text{or}\quad\left(\tfrac12+\tfrac{N-1}{Nr_B}> \tfrac{N}{2r}\;\text{and}\;l\le l_4\right)
\end{aligned}
$

&
$\displaystyle
(2^*_{N/r}-2)\tfrac{(2^*_{N/r}-1)}{2_{*N/r_B}-1}
$
\\
\cline{2-3}
%.........................
&
$\displaystyle
\begin{aligned}
\tfrac12+\tfrac{N-1}{Nr_B}> \tfrac{N}{2r} \; \;\text{and}\quad l>l_4
\end{aligned}
$
&
$\displaystyle (2^*_{N/r}-2)\tfrac{ (2-l)(N-2)(2^*_{N/r}-1)}{N\left(\tfrac2N-\tfrac1r\right)[N(1-l)+2]}
$
\\
\hline

\multicolumn{3}{|c|}{$\bm{r<\widetilde r_B}$}\\ \hline
$(2,a,a)$&$l<1,\ l\le\min\{\tfrac4N,\tfrac{2(r_B-N+1)}{(N-2)r_B}\}$&$\displaystyle l(2^*_{N/r}-1)$\\\cline{2-3}

&$l<1,\ \tfrac{(N+r)r_B-r(N-1)}{rr_B(N-2)}\le l<1$&$\displaystyle\tfrac{(2^*_{N/r}-2)(2^*_{N/r}-1}{2^*_{N/r_1}-1}$\\\cline{2-3}

&$1\le l<1+\tfrac{N-1}{r_B},\,l\le\min\{\tfrac2r,\tfrac{2[(N+r)r_B-r(N-1)]}{rr_B(N-2)}\}$&$\displaystyle l+2^*_{N/r}-2$\\\cline{2-3}

&$\displaystyle\tfrac{2[(N+r)r_B-r(N-1)]}{rr_B(N-2)}
\le l \le 1+\tfrac{N-1}{r_B}$
& $\displaystyle\tfrac{(l-1)(2^*_{N/r}-1)}{2^*_{Nr_1}-1}+2^*_{N/r}-1$\\
\cline{2-3}

&$l\ge\max\{\frac2r,1+\tfrac{N-1}{r_B}\}$&$\displaystyle(l-1)\tfrac{2^*_{N/r}-1}{2^*_{Nl/2}-1}+2^*_{N/r}-1$\\\hline

$(2,a,b)$
&
$\displaystyle l\le \min\left\{ \tfrac2r,\, 2^*\left[ \tfrac1r-\tfrac1N-\tfrac{N-1}{Nr_B} \right] \right\}
$
&
$\displaystyle
2^*_{N/r}-2
$
\\
\cline{2-3}

&
$\displaystyle
\tfrac2r\le l\le 2^*\left[ \tfrac1r-\tfrac1N-\tfrac{N-1}{Nr_B} \right]
$
&
$\displaystyle
\tfrac{ (2^*_{N/r}-2)(2^*_{N/r}-1) }{ 2^*_{Nl/2}-1 }
$
\\
\cline{2-3}

&
$\displaystyle
2^*\left[ \tfrac1r-\tfrac1N-\tfrac{N-1}{Nr_B} \right] \le l\le 1+\tfrac{N-1}{r_B}
$
&
$\displaystyle
\tfrac{ (2^*_{N/r}-2)(2^*_{N/r}-1) }{ 2^*_{N\left( \tfrac{l}{2^*} +\tfrac1{\widetilde r_B} \right)}-1
}
$
\\
\hline

$(2,b,a)$
&
$\displaystyle
l>\tfrac2r
$
&
$\displaystyle
\tfrac{2(2-l)}{N(1-l)+2} (2^*_{N/r}-1)
$
\\
\hline
$(2,b,b)$&$r<r_3,\ l\le l_2$ or $r>r_4,\ l\ge l_2$&$\displaystyle\tfrac{2^*}{N}-\tfrac{2_*}{r_B}$\\\cline{2-3}
&$(r<r_3,l>l_2)$ or $(r_3<r<r_4)$ or $(r>r_4,l<l_2)$&$\displaystyle\tfrac{\left(\tfrac{2^*}{N}-\tfrac{2_*}{r_B}\right)(2^*_{N/r}-1)}{2^*(1-r_2)-1}$\\\hline
\end{tabular}
\caption{}
\label{Table1}
\end{table}

\end{appendix}

\bibliographystyle{plain}
\bibliography{ref}

\end{document}